\newcommand{\specialvalue}{\frac{1}{2}}
\theoremstyle{plain}
\newtheorem{theorem}{Theorem}[section]
\newtheorem{lemma}[theorem]{Lemma}
\newtheorem{corollary}[theorem]{Corollary}
\newtheorem{proposition}[theorem]{Proposition}
\theoremstyle{definition}
\newtheorem{definition}[theorem]{Definition}
\theoremstyle{remark}
\newtheorem{remark}[theorem]{Remark}
\newcommand{\reals}{\mathbb{R}}
\newcommand{\naturals}{\mathbb{N}}
\newcommand{\integers}{\mathbb{Z}}
\newcommand{\rationals}{\mathbb{Q}}
\DeclareMathOperator{\id}{id}
\newcommand{\abs}[1]{\left\lvert#1\right\rvert}
\newcommand{\tensor}{\otimes}
\newcommand{\subgroup}{\leq}
\newcommand{\semiProd}{\rtimes}
\newcommand{\semiprod}{\semiProd}
\DeclareMathOperator{\im}{im}      
\DeclareMathOperator{\Span}{span}
\DeclareMathOperator{\tr}{tr}
\newcommand{\forget}[1]{}
\newcommand{\innerprod}[1]{\langle #1 \rangle}
\global\let\c@equation=\c@theorem}
\newcommand{\G}{\Gamma}
\DeclareMathOperator{\Cay}{Cay} 
\begin{document}

\pagestyle{myheadings}
\markboth{Mika{\"e}l Pichot, Thomas Schick, Andrzej Zuk}{Closed
  manifolds with transcendental $L^2$-Betti numbers} 


\title{Closed manifolds with transcendental $L^2$-Betti
  numbers} 

\author{ Mika\"el Pichot\thanks{Mika\"el Pichot was supported by JSPS and the WPI Initiative, MEXT, Japan
\protect\\
\protect\href{mailto:pichot@math.mcgill.ca}{e-mail:
  pichot@math.mcgill.ca}} \\
McGill University\\
Montreal, Canada 
\and Thomas Schick\thanks{Thomas Schick was partially supported by the Courant Research Center
    ``Higher order structures in Mathematics'' within the German 
initiative of
    excellence\protect\\
\protect\href{mailto:thomas.schick@math.uni-goettingen.de}{e-mail:
  thomas.schick@math.uni-goettingen.de}
}\\
Georg-August-Universit\"at G{\"o}ttingen\\
Germany
\and Andrzej Zuk\thanks{Andrzej Zuk was supported by the Humboldt
  foundation\protect\\ email: zuk@math.jussieu.fr}\\ Institute
Math{\'e}matiques de Jussieu\\
Paris, France\thanks{All authors were partially supported by HIM, Bonn. The
  main part of work was carried out during the HIM trimester program ``Rigidity''.}
}
\maketitle

\begin{abstract}
    In this paper, we show how to construct examples of closed manifolds with
    explicitly computed 
  irrational, even transcendental $L^2$ Betti numbers, defined via the
  universal covering.

  We show that every non-negative real number shows up as an $L^2$-Betti
  number of some covering of a compact manifold, and that many 
  computable real numbers appear as an $L^2$-Betti number of a universal
  covering of a compact manifold (with a precise meaning of computable given below).

  In algebraic terms, for many given computable real numbers (in
  particular for many 
  transcendental numbers) we show how to construct a finitely presented group
  and an element in
  the integral group ring such that the $L^2$-dimension of the kernel is the
  given number.

  We follow the method pioneered by Austin
  \cite{austin09:_count_to_quest_of_atiyah} but refine it to get 
   explicit calculations which make the above statements possible.

\end{abstract}

\noindent
2010 Mathematics Subject Classification 05C25 (primary), 03D40, 20F65, 55N25 (secondary).

\section{Introduction}
In 1974, Atiyah defined $L^2$-Betti numbers for covering spaces of closed
manifolds \cite{MR0420729}.  A priori these Betti numbers are real and   
Atiyah asked at the end of his paper to find examples where they are irrational.
The question remained open and the fact that these $L^2$-Betti numbers may always be rational, and even integral for torsion free groups,  has become known 
as the ``Atiyah conjecture''. Under conditions on the torsion in the group, 
more refined conjectures have been formulated and popularized as the ``strong Atiyah
conjectures''  \cite[Chapter 10]{MR1926649}, \cite[Definition 1.1]{MR1990479}, which are satisfied for many groups. 

Let us observe that the discussion is concerned with two slightly different cases:
\begin{itemize}
\item Atiyah from the very beginning studied arbitrary normal coverings of a
  compact manifold $M$. The resulting values for the $L^2$-Betti 
  numbers may be very different depending on which covering of $M$ they are associated with.
\item The most important special case, often exclusively considered in later
  work, uses the universal covering of the manifold $M$. This way, one
  defines invariants depending only on $M$: these are the invariants usually referred to as \emph{the} $L^2$-Betti numbers of $M$.
\end{itemize}

The $L^2$-Betti numbers are homotopy invariants of the underlying manifold
$M$. It follows from this that, when considering only the universal covering,
i.e.~\emph{the} $L^2$-Betti numbers, there is in total only a countable set of possible
values.

However, a given space can have uncountably many different normal coverings
(corresponding to the normal subgroups of the fundamental group) so that the
set of possible $L^2$-Betti numbers of normal coverings of compact manifolds
a priori could well be uncountable.

In a recent paper \cite{austin09:_count_to_quest_of_atiyah}, Tim Austin 
showed that the set of $L^2$-Betti numbers associated to all possible normal coverings of
compact manifolds is uncountable, and in particular contains irrational (and even transcendental) values. 

In the present paper, we show how to construct examples of closed manifolds with
explicitly given irrational (and transcendental) $L^2$-Betti numbers
for their universal coverings. As explained below, we follow closely the techniques 
developed by Austin in \cite{austin09:_count_to_quest_of_atiyah}, with refinements which
allow us to make explicit dimension calculations. {Explicit calculations (and
to some extend the basis of all these developments) have been carried out
previously in \cite{MR1934693, MR1866850, MR1797748}, which already lead to unexpected
values of $L^2$-Betti numbers, not however to any which one could prove to be irrational.}

The problem at hand
has a well known purely algebraic reformulation.
The aim is to produce a finitely presented group $G$
and an element $Q$ in the group algebra  $\mathbb{Z}[G]$ such that
$\dim_{G} \ker(Q)$, the von Neumann
dimension of the kernel of this operator acting on $l^2(G)$ is
irrational. Then there is a standard construction to obtain 
a  closed $7$-dimensional manifold $M$ with the fundamental group isomorphic to
$G$ 
and whose third $L^2$-Betti number (computed using the universal covering) is
equal to the $\dim_G  \ker(Q)$, see 
\cite[Lemma 10.5]{MR1926649} and \cite[Proposition 6 and Theorem 7]{MR1797748}.

If, instead of starting with a finitely presented group one only starts with
a finitely generated group $G$, the standard construction will result in a
manifold $M$ with normal covering $\overline M$ (which is not necessarily the
universal covering) such that the third $L^2$-Betti number for this covering
is equal to $\dim_G \ker(Q)$.

Actually, we construct a group $G$ which is not finitely presented but admits a 
recursive presentation and thus embeds into a finitely presented group
$H$ by Higman's theorem \cite{MR0130286}. For
a suitable element $Q \in \mathbb{Q}[G]$ we prove that $\dim_G \ker(Q)$
is transcendental. Clearing denominators, we can achieve that $Q \in
\integers[G]$ without changing its kernel. Finally, it is a standard fact that
the dimension of the kernel does not change if 
we let $Q$ act on $l^2(H)$, compare e.g.~\cite[Proposition
3.1]{MR1828605}.

The group $G$ will be of the form
$$ ( \mathbb{Z}_2^{\oplus {\G}}  / V )  \rtimes \G   $$
where $V$ is a suitable $\G$-invariant subspace of
$\mathbb{Z}_2^{\oplus {\G}}  $. For 
$\G$, we will choose either the free group on two generators $F_2$ (as in
\cite{austin09:_count_to_quest_of_atiyah}) or $\integers\wr\integers$.

The main result of \cite{austin09:_count_to_quest_of_atiyah} is to construct
an uncountable family of groups $G_i$ of the form above and operators $Q_i \in \mathbb{Q}[G_i]$
such that the numbers $\dim_{G_i} (\ker(Q_i))$ are all mutually different. It seems
hard to prove that among those groups for which 
$\dim_{G_i} \ker(Q_i)$ is irrational are recursively presented groups, as their
existence is only inferred from a counting argument.

In contrast, in the paper at hand we consider different operators $Q$
for which we manage to explicitly
compute $\dim_G\ker(Q)$. Along the way, we explicitly produce a recursively
presented group   $G$
for which  $\dim_G  \ker(Q)$ is transcendental.

Namely, for any set of natural numbers $I=\{0, n_k \}\subset \naturals$ (listed in
increasing order $0<n_1<n_2<\dots$) we construct a group $G_I$ as above whose presentation is
determined by the set $I$ together with $Q_I\in \rationals[G_I]$ such that
$$ \dim_{G_I} \ker(Q_I)=\beta_1 + \beta_2  \sum_{k=1}^\infty   2^{-d
    n_k +k}   $$ 
where $\beta_1$ and $\beta_2$ are some explicit rational numbers and $d$ is a
natural number.

We prove that $G_I$ has a recursive presentation (and therefore embeds into a
finitely presented group) if (and only if) $I$ is recursively enumerable.
It is now immediate to choose a recursively enumerable set $I$ which leads to an irrational
or even transcendental $L^2$-dimension, e.g.~by asking it to satisfy the Liouville
condition. Recall that a real number $x$ is a Liouville number if for any positive integer $n$, there exist integers $p$ and $q$ with $q > 1$ and such that
$ 0 <  \left   |   x - \frac{p}{q}        \right  |  <  \frac{1}{q^n} $.
Liouville \cite{liouville44:_sur_des_class_tres_etend} showed that such
numbers are transcendental.

Let us also stress the fact that we obtain these $L^2$-Betti numbers for
solvable groups (this is the reason why we use
the group $\G=\integers\wr\integers$), answering a question of
\cite{austin09:_count_to_quest_of_atiyah}. Note that for
torsion-free solvable groups the Atiyah conjecture is known \cite[Theorem 1.3]{MR1242889}.

Using the explicit form of these $L^2$-dimensions of kernels for the operators we
obtain, we can construct out of these for each real number
$r\ge 0$ a group $G_r$ (in general not recursively presented) and $A_r\in
M_n(\integers[G_r])$ with $\dim(\ker(A_r))=r$. This relies on explicit
knowledge of how the kernel looks like under the operations we employ.

We will discuss possible extensions of the result which can be
obtained with the same method. In particular, with suitable modifications and
additional effort one could produce many examples of $A\in M_n(\integers[G])$
as above with explicit knowledge of the full spectral measure (as in
\cite{MR1934693,MR1866850}). This spectral
measure would be atomic and many of the $L^2$-dimensions of the eigenspaces
would be transcendental.

We also discuss more about the question which $L^2$-Betti numbers can (by
modifications of the construction) be obtained using finitely presented groups.

\bigskip

  Lukasz Grabowski
  \cite{grabowski10:_turin_machin_dynam_system_and_atiyah_probl} has independently and simultaneously, using an approach
  which implements Turing machines directly in the integral group ring of a
  suitable recursively presented group, arrived at results similar to
  ours. Using yet another strategy, Franz Lehner and Stephan Wagner
  \cite{lehner:_free_lampl_group_and_quest_of_atiyah} manage in a clever way to
  calculate explicitly the $L^2$-dimension of the kernel of the usual graph Laplacian (as in \cite{MR1934693}), but for wreath
  products of finite groups with non-abelian free groups, and find
  non-rational algebraic numbers among the values---giving yet another
  interesting example.
\smallskip

\textbf{Acknowledgements}. The results of this paper were obtained during the Trimester Program on
Rigidity at the Hausdorff Institute of Mathematics, Bonn. The authors thank
HIM for the stimulating atmosphere and the generous support to be able to
participate in this program. The authors also thank Tim Austin for fruitful discussions.

\section{Preliminary remarks}

We closely follow the notations of
\cite{austin09:_count_to_quest_of_atiyah}. This will hopefully help a reader
interested in reading concurrently both papers. 

We consider groups of the form 
$(\mathbb{Z}_2^{\oplus \G} / V)  \rtimes_\alpha \G$
where $V$ is some left translation invariant subgroup of $\mathbb{Z}_2^{\oplus
  \G}$ and the action $\alpha$ of $\G$ is by translations on the
left. Usually, we will omit $\alpha$ in the notation.

For the first few sections, we will only assume that $\G$ is generated by two
elements $s_1,s_2$ and that it satisfies the zero divisor
conjecture over $\mathbb{Z}_2$. The latter means that the group ring
$\integers_2[\G]$ contains no non-trivial zero divisors. It implies in
particular that the group is torsion free, so $s_1,s_2$ are of infinite order.
 Later, concrete computations  will be
carried out notably in the case of the free group $\G=F_2$  and of the
wreath product $\G=\mathbb{Z}\wr \mathbb{Z}$ with natural generating sets,
which of course satisfy all these conditions. 

We denote by $\Cay(\G,S)$ the right Cayley graph of $\G$ with respect to $S=\{s_1^{\pm 1},s_2^{\pm 1}\}$. By a path $P$ in $\Cay(\G,S)$ we mean
 a subset $\{g_1,g_2,g_3,\ldots ,g_\ell\}\subset \G$ of mutually distinct
 consecutive elements of $\Cay(\G,S)$.

We recall that the Pontryagin dual of $\mathbb{Z}_2^{\oplus \G}$  is
isomorphic to the infinite product $\mathbb{Z}_2^\G$. Sometimes we will
identify an element $\chi\in \integers_2^\G$ with the subset
$\chi^{-1}(1)\subset \G$. We denote by 
\[
V^\perp =\{\chi \in \mathbb{Z}_2^\G \mid \langle \chi\mid v\rangle =0~~\forall v\in V\}
\]
the dual of $\mathbb{Z}_2^{\oplus \G} / V$ and recall that the Fourier transform 
\[
\ell^2(\mathbb{Z}_2^{\oplus \G}/V)\simeq L^2(V^\perp,m_{V^\perp})
\]
where $m_{V^\perp}$ is the Haar measure on $V^\perp$, induces a spatial isomorphism 
between the group von Neumann algebra $L((\mathbb{Z}_2^{\oplus \G} / V)  \rtimes
\G)$ and the cross-product von Neumann algebra $L^\infty(V^\perp)\rtimes_{\hat
  \alpha} \G$. The dual action $\hat \alpha$ is on $V^\perp$  defined by
$\langle \hat \alpha_g\chi\mid u\rangle=\langle  \chi\mid
\alpha_{g^{-1}}u\rangle$ for $\chi\in V^\perp$ and $u\in \mathbb{Z}_2^{\oplus
  \G} / V$. Note that, if we think of $\chi\in \integers_2^\G$ as a subset of
$\G$, then $\hat\alpha_g\chi$ corresponds to the subset $g\chi^{-1}(1)$,
i.e.~is obtained by a left translation by $g$. 

 For simplicity we sometimes denote $s(\chi):=\hat \alpha_s\chi$. 
For $F\in  L^\infty(V^\perp)$ we denote by $M_F\in
L^\infty(V^\perp)\rtimes_{\hat \alpha} \G$ the twisted pointwise
multiplication, defined by 
\[
M_F f(\chi,g)=F(\hat \alpha_g(\chi))\cdot f(\chi,g)
\] 
and by $T_{s}$ the translation operator given by $T_sf(\chi,g)=f(\chi,s^{-1}g)$.
Checking the definitions, we observe the covariant relation 
\begin{equation}\label{eq:covariance}
  T_{s^{-1}} M_F T_s=M_{F\circ\hat\alpha_s}.
\end{equation}

\section{The revised operators}

We want to construct certain operators in the rational group ring $ \mathbb{Q}
[\mathbb{Z}_2^{\oplus \G} / V  \rtimes \G]$, viewed as acting on
$L^2(V^\perp,m_{V^\perp})\bar \otimes \ell^2(\G)$.
They will be taken to be of the form
\begin{equation}\label{eq:operator}
A=\sum_{s\in S} T_{s^{-1}}(M_{{F_s}_{|V^\perp}} +
M_{{F_{{s^{-1}}}\circ\hat\alpha_{s^{-1}}}_{|V^\perp}}) ,
\end{equation}
where $F_s : \mathbb{Z}_2^\G \to \mathbb{Q}$ will depend only on finitely
many coordinates around the origin $e$. The operator $A$ is self-adjoint
as shown in \cite[Lemma 3.1]{austin09:_count_to_quest_of_atiyah}.

The essential difference with the operators of
\cite{austin09:_count_to_quest_of_atiyah} is that the function $F_s$ will
recognize a very specific family of paths that we call ``hooks'' and which
substitute the paths ``with no small horizontal doglegs''  of \cite[Definition
3.2]{austin09:_count_to_quest_of_atiyah}. This  one ingredient simplifies
several computations and is what allows us to calculate the von
Neumann dimensions exactly.

\begin{definition} A  path $P$ (finite or not)
in $\Cay(\G,S)$ is called a \emph{hook} if it has the form
\begin{equation}\label{eq:def_of_hook}
P=\left \{
gs_2^{-n}, \ldots, gs_2^{-1}, g , g s_1, g s_1 s_2^{-1}, \ldots , g s_1 s_2^{-m}
\right \}
\end{equation}
for some $g \in \G$ and $n,m\in \{1,\dots,\infty\}$. If $n<\infty$ then $gs_2^{-n}$ is called
  the \emph{left endpoint} of $P$. We call $n$ the \emph{length of the left
    leg} and $m$ the \emph{length of the right leg}.

We call a path $P$ (finite or not) a \emph{vertical segment} if 
\begin{equation*}
  P=\left\{ gs_2^{-n},\ldots,g,\ldots,gs_2^m\right\} \qquad\text{for some }n,m\in\integers.
\end{equation*}

If $h,hs_2\in P$, but $hs_2^{-1}\notin P$ we call $h$ a \emph{lower endpoint}
of the hook or vertical segment $P$. If $P$ is a hook and $h$ is additionally
the left endpoint, then $h$ is called \emph{left lower endpoint}.
\end{definition}

We denote by $B(g,k)$ the ball of radius $k$ around $g\in \G$ in $\Cay(\G,S)$.

\begin{definition}
Let  $\chi \in \mathbb{Z}_2^\G$. We say that $\chi$ is \emph{$1$-good} if for
some hook $P$ in $\Cay(\G,S)$ containing $e$, its restriction $\chi_{|B(e,1)}$
to $B(e,1)$ equals $1$ on $P$ and $0$ outside. We say that $\chi$ is
\emph{locally
  good} if $\chi$ is $1$-good and $s(\chi)$
is $1$-good for every $s\in \chi^{-1}(1)\cap B(e,1)$. We say that $\chi$ is
\emph{interior good} if $\chi$ is locally good and
$\abs{\chi^{-1}(1)\cap B(e,1)}=3$. We say that $\chi$ is a
\emph{good end} if $\chi$ is locally good and $e$ is a lower endpoint of the
hook $P$ above. This happens exactly if $\abs{\chi^{-1}(1)\cap B(e,1)}=2$. 
\end{definition}

We now introduce  $F_s\colon \mathbb{Z}_2^\G \to
\mathbb{Q}$. If $\chi \in \mathbb{Z}_2^\G$  is interior
good then
\begin{itemize}
\item[(i)] $F_s(\chi) := 1$ if $s^{-1}(\chi)$ is interior  good;
\item[(e)] $F_s(\chi) :=2$ if 
  $s^{-1}(\chi)$ is a good end;
\item[(b)] $F_s(\chi):=\specialvalue$ if $s^{-1}(\chi)$ is $1$-good, but not locally good.
\end{itemize}
Define $F_s(\chi):=0$ otherwise. Note that this happens if $\chi$ is not interior
good or if $\chi$ is, but $s^{-1}\chi$ is not
$1$-good.

In the definition of the operator $A$, both $F_s(\chi)$ and
$F_{s^{-1}}\hat\alpha_{s^{-1}}(\chi)$ appear. For further reference, we compile
  a little table showing the values of these two functions for the different
  possibilities. The columns give the different properties of $\chi$, the rows
  those of $s^{-1}\chi=\hat\alpha_{s^{-1}}\chi$. The first value in each entry is
  $F_s(\chi)$, the second one $F_{s^{-1}}(s^{-1}\chi)$.

\noindent
  \begin{tabular}{|l|l|l|p{2cm}|l|}\hline
    $s^{-1}\chi$ $\backslash$ $\chi$ & int.~good & good end & $1$-good, not
    loc.~good & not $1$-good\\ \hline
  int.~good & $1;\; 1$ & $0;\; 2$ & $0 ; \; \specialvalue$ & $0;\;0$\\\hline
  good end & $2;\;0$& $0;\;0$& $0;\;0$ & $0;\;0$\\ \hline
  $1$-good, not loc. good & $\specialvalue;\;0$ & $0;\;0$ & $0;\;0$ &$0;\;0$\\ \hline
  not $1$-good & $0;\;0$ & $0;\;0$& $0;\;0$ & $0;\;0$\\ \hline
  \end{tabular}

\smallskip

This follows by inspecting the definition of $F_s$. Note that $F_s(\chi)$
depends on $\chi$ and $s^{-1}\chi$, whereas
$F_{s^{-1}}\hat\alpha_{s^{-1}}(\chi)=F_{s^{-1}}(s^{-1}\chi)$ depends on
$s^{-1}\chi$ and $(s^{-1})^{-1}s^{-1}\chi=\chi$, which of course also explains
why the second matrix we obtain is the transpose of the first. 

Finally note that $A$ is a sum of operators of the form $T_{s^{-1}} M_{G_{s}}$ where
$G_{s}:=F_{s}+F_{s^{-1}}\hat\alpha_{s^{-1}}$
itself is a linear combination of characteristic functions, and $G_{s}$ depends
only on the $3$-neighborhood of $e$. For later
reference and convenience we list the relevant values of $G_s(\chi)$
next. Note that $G_s(\chi)$ depends on $\chi$ and $s^{-1}\chi$; we will give a
 description now.
\begin{proposition}\label{prop:precise_A}
   \begin{enumerate}
   \item $G_s(\chi)=0$ if $\chi$ is not a $1$-good because then neither $\chi$
     nor $s^{-1}\chi$ is interior good.
   \item (Case where $\chi$ is an end): Assume next that $\chi$ is $1$-good
     and $\chi^{-1}(1) \cap B(e,1)  =\{e,s_2\}$. Then $G_s(\chi)=0$ if $s\ne s_2$ because
     then $s^{-1}\chi$ is not $1$-good. Moreover, $G_{s_2}(\chi)=2$ if
     $s_2^{-1}\chi$ is interior good (i.e.~the path extends two more steps)
     and $G_{s_2}(\chi)=0$ otherwise. 
   \item Now assume that $\chi$ is $1$-good and
     $\chi^{-1}(1)=\{e,s_2^{-1},t\}$. For $s\ne s_2^{-1},t$, $G_s(\chi)=0$
     because then $s^{-1}\chi$ is not $1$-good. Moreover,
     \begin{enumerate}
     \item (case where in one direction the path goes bad): if for $s\in
       \{s_2^{-1},t\}$ $s^{-1}\chi$ is not $1$-good (i.e.~the path doesn't
       extend in this direction) then $G_s(\chi)=0$. Write
       $\{s_2^{-1},t\}=\{s,s'\}$, then $G_{s'}(\chi)=0$ if ${s'}^{-1}\chi$ is
       not interior good, and $G_{s'}(\chi)=\specialvalue$ otherwise.
     \item (case where in one direction, necessarily $s_2^{-1}$, the path
       ends). If $s_2\chi$ is a good end then $G_{s_2^{-1}}(\chi)=2$ if $\chi$
       is interior good and $G_{s_2^{-1}}(\chi)=0$ if $\chi$ is not interior
       good (the latter situation we've just discussed).
     \item Assume now that $\chi$ is interior good, $s\in\{s_2^{-1},t\}$ and
       $s^{-1}\chi$ is interior good (i.e.~the hook extends in two directions
       through $e$, and in direction $s$ even two steps). Then $G_s(\chi)=2$.
     \end{enumerate}
   \end{enumerate}
 \end{proposition}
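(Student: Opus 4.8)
The plan is to treat Proposition~\ref{prop:precise_A} as a bookkeeping exercise resting on two ingredients: the table of values of the pair $(F_s(\chi),\,F_{s^{-1}}(s^{-1}\chi))$ displayed just above, and a dictionary translating the informal geometric phrases of the statement (``the path extends two more steps'', ``the path ends'', ``the path goes bad'', ``the path does not extend'') into the precise status of $\chi$ and of $s^{-1}\chi$ as $1$-good / locally good / interior good / good end. Since $G_s=F_s+F_{s^{-1}}\circ\hat\alpha_{s^{-1}}$, once the types of $\chi$ and of $s^{-1}\chi$ are known the number $G_s(\chi)$ is literally the sum of the two entries in the corresponding cell of the table; so the proof is a finite list of table lookups, and all the actual work goes into justifying which cell we are in.

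First I would record the elementary observation that $s^{-1}\chi$ can be $1$-good only if $\chi(s)=1$, i.e.\ only if $s\in\chi^{-1}(1)$. Indeed $s^{-1}\chi$ corresponds to the set $s^{-1}\chi^{-1}(1)$, and any $1$-good configuration contains $e$ (every hook through $e$ contains $e$, hence the restriction to $B(e,1)$ takes the value $1$ there); thus $e\in s^{-1}\chi^{-1}(1)$, that is $s\in\chi^{-1}(1)$. Since both $F_s(\chi)$ and $F_{s^{-1}}(s^{-1}\chi)$ vanish unless $s^{-1}\chi$ is at least $1$-good, this already yields $G_s(\chi)=0$ whenever $s\notin\chi^{-1}(1)$, which disposes of every ``$G_s(\chi)=0$ because $s^{-1}\chi$ is not $1$-good'' clause in (1)--(3), of the ``$s\ne s_2$'' part of (2), and of the reduction $s\in\{s_2^{-1},t\}$ in (3). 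I would then enumerate, for $1$-good $\chi$, the possible sets $\chi^{-1}(1)\cap B(e,1)$: reading off \eqref{eq:def_of_hook}, the vertex $e$ sits in a hook $P$ either at a corner ($e=g$ or $e=gs_1$), in the interior of a leg, or at a lower endpoint, so $\chi^{-1}(1)\cap B(e,1)$ equals one of $\{e,s_2^{-1},s_1\}$, $\{e,s_1^{-1},s_2^{-1}\}$, $\{e,s_2^{-1},s_2\}$ (cardinality $3$, so interior good is possible) or $\{e,s_2\}$ (cardinality $2$, a good end). This is exactly the dichotomy between cases (1)/(2) and (3) and it pins down the admissible $s$. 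Case (1) is then immediate: if $\chi$ is not $1$-good it is in particular not interior good, so $F_s(\chi)=0$ by definition, while $F_{s^{-1}}(s^{-1}\chi)\ne 0$ would force $s(s^{-1}\chi)=\chi$ to be $1$-good, a contradiction; hence $G_s(\chi)=0$.

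Finally I would run through (2), 3(a), 3(b) and 3(c) one at a time. In each, the observation above pins $s$ down to an element of $\chi^{-1}(1)\cap S$; for that $s$ I translate the asserted behaviour of the path at $e$ and at $s$ into the types of $\chi$ and of $s^{-1}\chi$ (``extends two more steps'' $=$ interior good, ``ends'' $=$ good end, ``goes bad'' $=$ $1$-good but not locally good, ``does not extend'' $=$ not $1$-good) and read $G_s(\chi)$ off the table: e.g.\ in 3(c) both $\chi$ and $s^{-1}\chi$ are interior good, so the top-left cell gives $G_s(\chi)=1+1=2$; in 3(b) with $s=s_2^{-1}$ one has $s^{-1}\chi=s_2\chi$ a good end and $\chi$ interior good, so the cell ``$\chi$ int.\ good / $s^{-1}\chi$ good end'' gives $G_{s_2^{-1}}(\chi)=2+0=2$, whereas if $\chi$ is not interior good that row already carries $0$ in its first slot; every ``otherwise'' subcase lands in a cell whose relevant slot is $0$. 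I expect the main obstacle to be purely organizational: keeping the translation dictionary airtight — in particular not confusing $\hat\alpha_{s^{-1}}\chi$ with $\hat\alpha_s\chi$, and checking that when ``the path ends at $s_2^{-1}$'' the relevant translate is a good end with $e$ itself as its lower endpoint, so that the good-end cardinality count holds at $e$. Once that is settled, every assertion in the proposition is a direct table lookup.
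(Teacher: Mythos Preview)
Your proposal is correct and is essentially the paper's own approach: the paper does not give a separate proof of this proposition at all, treating each clause as self-evident from the table of values of $(F_s(\chi),F_{s^{-1}}(s^{-1}\chi))$ together with the definition of $F_s$. What you have written is exactly the bookkeeping that the paper leaves implicit, including the key observation that $G_s(\chi)=0$ whenever $s\notin\chi^{-1}(1)$ and the enumeration of the possible local patterns $\chi^{-1}(1)\cap B(e,1)$ from the hook definition.
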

\begin{remark}\label{rem:colloquial_A}
\begin{enumerate}
\item   In other words: we only ``move along the path'', with weight $2$ if one is
  in an interior situation or arrives at or from a good end point (with some
  extension of the path in all directions). We use weight $\specialvalue$ if we move to
  or from a point which is next to a bad point (again the path has to extend a
  bit in the other direction).
\item Our definition of $F_s$ involves $1$-neighborhoods rather than
  $10$-neighborhoods. This will make calculations later easier, in particular
  if 
  $\G$ is not the free group. In the framework of
  \cite{austin09:_count_to_quest_of_atiyah} 
  one can economize and can reduce the size of the neighborhoods, albeit not 
  to $1$.   
\item We emphasize that our definition of $F_s$ makes the operators $A$ follow
  the hook itself, rather than its $1$-neighborhoods (this convenient
  simplification will be made precise in Section \ref{sec:decomposition_of_V}). 
\end{enumerate}
\end{remark}

\section{Decomposition of $V^\perp$ into invariant subsets}
\label{sec:decomposition_of_V}

This section follows pretty much   \cite[Section
3.2]{austin09:_count_to_quest_of_atiyah}, with slight modifications that we
indicate now.

\begin{definition}
Given $\chi\in\integers_2^\G$, a ball $B(g,1)$ is called a \emph{good
  neighborhood} of $\chi$, if $g^{-1}(\chi)\cap B(e,1)=\{e,s_2\}$. $B(g,1)$ is
called a \emph{bad 
  neighborhood} if $g^{-1}(\chi)$ is not $1$-good. 
\end{definition}

Having this definition, and since our notations essentially coincide, we can obtain a
partition of $\mathbb{Z}_2^\G$ by simply copying that of
\cite[Section 3]{austin09:_count_to_quest_of_atiyah}. Namely, we obtain first
a disjoint Borel partition
\[
\mathbb{Z}_2^\G=C_0\cup C_{1,1}\cup C_{1,2} \cup C_{2,2} \cup C_{1,\infty}
\cup C_{2,\infty} \cup C_{\infty,\infty}.
\]
Here $C_0$ is the set of $\chi$ such that  $F_s(\chi)$ and
$F_{s^{-1}}(s^{-1}\chi)$ are both zero for all generators $s$. If $\chi\notin
C_0$ then in particular $\chi$ is $1$-good, i.e.~$\chi^{-1}(1)\cap B(e,1)$
contains a piece of a path containing $e$.

To which of the other sets $C_{i,j}$ a $\chi\notin C_0$ belongs is now
determined according to the fate of two walkers
starting at the origin and moving in opposite directions along this path
starting at $e$. Indeed, for this description we identify $\chi$ with the
subset $\chi^{-1}(1)$ of $\G$. 
Each walker will have as path a (possibly infinite)
hook or vertical segment $R'$, starting at $e$.  

We have three possible disjoint ending scenarios $i,j\in \{1,2,\infty\}$ for
each walker:
\begin{itemize}
\item[($\infty$)] the walker never reaches a good or a bad neighborhood, and
  continues his path forever; 
\item[(1)] the walker reaches a good neighborhood and ends up at a lower end point
  of a hook;
\item[(2)] the walker reaches a bad neighborhood and stops walking.
  In this
   case we let $P'\subset R'$ be the path that the given walker follows up to
   distance $1$ of his stopping point. 
\end{itemize}
 Furthermore, in case (1) we set $P':=R'$ to be the path followed by our given
 walker.
Finally, we set $R(\chi)$ to be the union of the two hooks $R'$ of the two
walkers, and define $P(\chi)$ similarly.
Note that these are hooks or vertical segments and that
the intersection of $\chi$ with the $1$-neighborhood of each endpoint of $P$
(if it exists) determines the ending scenario at that endpoint. Finally, let
$\psi(\chi)$ be the restriction of the function $\chi$ to the $1$-neighborhood
of $R$.   It takes the value $1$ on $R$ and $0$ on all 
points outside $R$ except possible on the $1$-neighborhood of $R\setminus P$,
the set of ``bad'' endpoints (if they exist).

If
$i,j<\infty$, $R$ and $P$ are finite.
Note that we have no way to order the walkers a priori: only the unordered
tuple of ending scenarios is significant.

Next, we make a further refinement and decompose each set $C_{i,j}$ for $i\le j<\infty$ according to
abstract triples $(P,R,\psi)$ which occur in the above discussion. 
Let $\Omega_{i,j}$ be the set of all such triples and set
\[
C_{(P,R,\psi)} =\{\chi\in \mathbb{Z}_2^\G \mid R(\chi)=R,\;P(\chi)=P,\;\psi(\chi)=\psi\}.
\]

We obtain:

\begin{lemma} The following is a Borel partition of  $\mathbb{Z}_2^\G$:
\[
 \mathbb{Z}_2^\G=C_0\cup \left ( \bigcup_{i,j\in {1,2}\ i\leq
     j}\bigcup_{(P,R,\psi)\in \Omega_{i,j}}  C_{(P,R,\psi)} \right)\cup
 C_{1,\infty} \cup C_{2,\infty} \cup C_{\infty,\infty}.
\]
\end{lemma}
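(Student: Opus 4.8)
The plan is to verify that the listed sets are pairwise disjoint and that their union is all of $\integers_2^\G$, by tracing through the two-walker construction carefully and showing it is well-defined. First I would fix $\chi\in\integers_2^\G$ and dispose of the trivial case: if $F_s(\chi)$ and $F_{s^{-1}}(s^{-1}\chi)$ vanish for all $s\in S$, then by definition $\chi\in C_0$, and one checks from the table that in this case $\chi$ is not $1$-good (if $\chi$ were $1$-good and not in $C_0$, some $G_s$ would be nonzero along the path through $e$). So assume $\chi\notin C_0$; then $\chi$ is $1$-good, meaning $\chi^{-1}(1)\cap B(e,1)$ is the trace of a hook $P_0$ through $e$. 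This gives exactly two directions in which a walker can set out from $e$ (the two neighbors of $e$ on $P_0$, or one neighbor if $e$ is already a lower endpoint, in which case that walker is ``trivial'' and its fate is recorded as scenario (1) at $e$ itself — I'd make this degenerate convention explicit).

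Next I would argue that each walker's trajectory is \emph{deterministic}: at each step the walker sits at a vertex $g$ with $g^{-1}(\chi)\cap B(e,1)$ equal to the trace of a hook, and either (a) $g^{-1}(\chi)\cap B(e,1)=\{e,s_0\}$ for a single generator $s_0$, i.e.\ $B(g,1)$ is a good neighborhood and the walker halts at the lower endpoint $g$ (scenario (1)); or (b) $g^{-1}(\chi)$ is not $1$-good, i.e.\ $B(g,1)$ is a bad neighborhood, the walker halts, and we record $P'$ as the portion of her path up to distance $1$ from $g$ (scenario (2)); or (c) the hook through $g$ continues unambiguously in the forward direction, giving a unique next vertex. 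Hence for each of the two walkers the pair (path $R'$, fate in $\{1,2,\infty\}$) is a function of $\chi$. Setting $i\le j$ to be the two fates (sorted), $P$ the union of the two $P'$s and $R$ the union of the two $R'$s, we get well-defined data $(P,R)$, and in the case $i,j<\infty$ the function $\psi=\chi_{|B(R,1)}$ is determined; one then observes $\psi$ must be $1$ on $R$, $0$ outside $B(R,1)$, and $0$ on $B(R,1)\setminus B(R\setminus P,1)$ as asserted in the definition of $\Omega_{i,j}$, because a bad neighborhood (the only place $R\ne P$) is precisely where $\chi$ refuses to look like a hook, whereas near a scenario-(1) endpoint or an interior point $\chi$ restricted to the $1$-ball is forced. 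This shows every $\chi\notin C_0$ lands in exactly one of $C_{(P,R,\psi)}$ (for $i,j<\infty$), $C_{1,\infty}$, $C_{2,\infty}$, $C_{\infty,\infty}$ — membership, not just covering, because the fates and the data are functions of $\chi$.

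For disjointness in the remaining direction I would check that distinct triples $(P,R,\psi)$ give disjoint cylinder sets: if $\chi\in C_{(P,R,\psi)}$ then $R$, $P$ and $\psi$ are recovered from $\chi$ by the deterministic walker procedure just described, so $\chi$ cannot simultaneously lie in $C_{(P',R',\psi')}$ for a different triple; and the $C_{i,\infty}$, $C_{\infty,\infty}$ are distinguished from each other and from the finite-fate sets by the (sorted) pair of fates, which again is a function of $\chi$. Borel measurability is immediate: $C_0$ and each $C_{(P,R,\psi)}$ are finite intersections of cylinder sets hence clopen, while $C_{1,\infty}$, $C_{2,\infty}$, $C_{\infty,\infty}$ are countable intersections/unions of such (``walker survives past radius $N$'' for all $N$), hence Borel. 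Assembling these observations yields the claimed Borel partition. The main obstacle I anticipate is purely bookkeeping: being scrupulous about the degenerate cases — when $e$ is itself a lower (or left lower) endpoint so one walker is trivial, when the two walkers' paths would overlap beyond $e$ (they cannot, since a hook through $e$ has $e$ with at most one neighbor in each of the two ``sides''), and when $R\ne P$ so that $\psi$ carries genuine extra information on $B(R\setminus P,1)$ — and making sure the sorting convention $i\le j$ does not lose track of which walker had which fate when $P$ or $R$ is not symmetric. None of this is deep, but it must be done consistently, and it is exactly the place where the hook definition (rather than Austin's ``no small horizontal doglegs'') pays off by making each walker step canonical.
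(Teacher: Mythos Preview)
The paper gives no proof of this lemma; it is stated as an immediate consequence of the two-walker construction, which is taken over verbatim from \cite[Section 3.2]{austin09:_count_to_quest_of_atiyah}. Your detailed verification is precisely the natural expansion of that implicit argument and matches the paper's approach.

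One minor slip worth fixing: your assertion that $\chi\in C_0$ forces $\chi$ not to be $1$-good is false. A $\chi$ that is $1$-good but whose hook-trace is too short (for example $\chi^{-1}(1)\cap B(e,1)=\{e,s_2\}$ with $s_2^{-1}\chi$ not interior good) has all $G_s(\chi)=0$ and so lies in $C_0$; your parenthetical ``justification'' is a non-sequitur. Fortunately only the converse implication ($\chi\notin C_0\Rightarrow\chi$ is $1$-good) is actually used, and you state that correctly. For disjointness of the cylinder sets from $C_0$ you should instead argue directly that the data $\psi$ on $B(R,1)$ for a valid triple $(P,R,\psi)\in\Omega_{i,j}$ already determines the walker triple \emph{and} forces some $G_s\ne 0$ at $e$ (equivalently, forces $\chi\notin C_0$); this is the point your recovery argument is implicitly using.
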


By intersection with $V^\perp$ this leads to a Borel partition of $V^\perp$
and therefore to an orthogonal
decomposition of $L^2(V^\perp)$. Depending on $V^\perp$, several summands might
vanish.

\bigskip

Later, the pile-up of eigenspaces is organised according to the following equivalence
relations on triples $(P,R,\psi)$: 

\begin{definition}
Two triples $(P_1,R_1,\psi_1)$ and $(P_2,R_2,\psi_2)$ are said to be
\emph{translation equivalent} if there exists a $g\in \G$ such that
$P_2=gP_1$, $R_2=gR_1$ and $\psi_2(gh)=\psi_1(h)$ for all $h\in
U(R_1,1)$, where for $k\in\naturals$ and $X\subset \G$ we let $U(X,k)$ denote
the $k$-neighborhood of $X$ in the Cayley graph of $\G$.
\end{definition}

Equivalence classes in $\Omega_{i,j}$ are finite (since $R$ is finite and contains $e$) and
we denote them by $\mathcal{C}\in \Omega_{i,j}/\sim$. Moreover, note that if
$e\in P$ then the set of $g\in\G$ which translates $(P,R,\psi)$ to a
translation equivalent pair are exactly the $g\in P$ with $g^{-1}\in P$.

\section{Unitary equivalence}
\label{sec:unitary_equiv}

We obtain the decomposition 
\begin{equation}\label{decomposition}
L^2(V^\perp) \bar\otimes\ell^2(\G) \simeq \mathcal{H}_0\oplus \left(
  \bigoplus_{1\le i\le j\le 2}\;\bigoplus_{\mathcal{C} \in
    \Omega_{i,j}/\sim}  \mathcal{H}_{\mathcal{C}} \right)\oplus
\mathcal{H}_{1,\infty} \oplus \mathcal{H}_{2,\infty} \oplus
\mathcal{H}_{\infty,\infty} 
\end{equation}
where the notation is close to the one in \cite[Section
3]{austin09:_count_to_quest_of_atiyah}, namely: 
\[
\mathcal{H}_{\mathcal{C}}=\bigoplus_{(P,R,\psi)\in \mathcal{C}}
\mathcal{H}_{(P,R,\psi)}~~\mathrm{and}~~
\mathcal{H}_{(P,R,\psi)}=\mathrm{Im}(M_{\mathbf{1}_{C(P,R,\psi)}}) 
\]
while $\mathcal{H}_{i,j}=\mathrm{Im}(M_{\mathbf{1}_{C_{i,j}}})$,
$\mathcal{H}_{0}=\mathrm{Im}(M_{\mathbf{1}_{C_{0}}})$.

More precisely, we should have written $C_{i,j}\cap V^\perp$, and we think of
the characteristic function
$\mathbf{1}_{C_{i,j}}$ as a bounded measurable function on $V^\perp$, thus acting
by left multiplication on $L^2(V^\perp)$ and also by twisted left
multiplication on $L^2(V^\perp)\bar\otimes
l^2(\G)$. For notational convenience, we have omitted reference to $V$ here.

Note that, because $\mathcal{H}_{(P,R,\psi)}$ is defined by left
  multiplication of $L^2(V^\perp)\bar\otimes \ell^2(\G)$ with a projection in
  $L^\infty(V^\perp)\rtimes_{\hat \alpha} \G$, it is a right Hilbert
  $\integers_2^{\oplus\G}/V\rtimes\G$-module. Its von Neumann dimension is
  given by the measure of the subset $C_{P,R,\psi}$:
  \begin{equation}\label{eq:dim_of_relevant_subspace}
    \dim_{\integers_2^{\oplus\G}/V\rtimes\G}( \mathcal{H}_{P,R,\psi}) =
    m_{V^\perp}(C_{P,R,\psi}\cap V^\perp).
  \end{equation}

Corresponding statements apply to the other subspaces.

\begin{proposition}\label{prop:decomp_of_op}
  For each $\mathcal{C}\in\Omega_{i,j}/\sim$ (with $1\le i\le j\le 2$) the
  subspace $\mathcal{H}_{\mathcal{C}}$ is $A$-invariant. 

 Moreover, let 
 $V^{l,i,j} $
be the following weighted graphs: a segment of length $l\ge 2$ where each interior
edge has weight two, and if $(i,j)=(1,1)$, both boundary edges have weight
$2$ as well, whereas if $(i,j)=(1,2)$ then one boundary edge has weight $\specialvalue$
while the other has weight $2$, and if $(i,j)=(2,2)$ then both boundary edges
have weight $\specialvalue$.

Let $A^{l,i,j}$
be the  weighted adjacency matrices, regarded as operators on $l^2 (V_v^{l,i,j}
)$, where $V_v^{l,i,j}$ denotes the vertex set of the graph $V^{l,i,j}$. 

Choose one $(P,R,\psi)\in\mathcal{C}$, e.g.~the one with $e$ as the (left)
lower endpoint of $P$; ``left'' if $P$ is a hook (and not a vertical segment).

Then we have a unitary equivalence of
Hilbert $\integers_2^{\oplus\G}/V\rtimes\G$-endomorphisms
$$A |_{\mathcal{H}_\mathcal{C}}  \simeq id_{\mathcal{H}_{(P,R,\psi)}} 
\otimes A^ {l,i,j}$$ 
where $l$ is the length of the path $P$ and $(i,j)$ is the ending scenario of
$(P,R,\psi)$.
\end{proposition}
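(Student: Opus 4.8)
The plan is to unwind the definitions so that the operator $A$ restricted to $\mathcal{H}_{\mathcal C}$ becomes a direct sum of copies of the finite-dimensional weighted-path operator $A^{l,i,j}$, indexed by the fibre $\mathcal{H}_{(P,R,\psi)}$ which plays a purely ``multiplicity'' role. First I would fix the distinguished representative $(P,R,\psi)\in\mathcal C$ with $e$ as the (left) lower endpoint of $P$, enumerate the path as $P=\{e=g_0,g_1,\dots,g_{l-1}\}$ with $g_{r}=g_{r-1}s$ for the appropriate generator (this is exactly the shape of a hook or vertical segment), and recall from Section~\ref{sec:decomposition_of_V} that the translates $g_r\cdot(P,R,\psi)$, $r=0,\dots,l-1$, run through all of $\mathcal C$ (with the subtlety about which $g$ effect a translation equivalence already noted there). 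Correspondingly, $\mathcal{H}_{\mathcal C}=\bigoplus_{r} \mathcal H_{g_r(P,R,\psi)}$, and each $\mathcal H_{g_r(P,R,\psi)}=\operatorname{Im}(M_{1_{C(g_r(P,R,\psi))}})$ consists of functions $f(\chi,h)$ supported where $\hat\alpha_h(\chi)\in C(g_r(P,R,\psi))\cap V^\perp$.

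Next I would identify each summand $\mathcal H_{g_r(P,R,\psi)}$ with the fixed model space $\mathcal H_{(P,R,\psi)}$ via the partial isometry induced by right translation $h\mapsto h g_r$ together with the relabelling $\chi\mapsto \hat\alpha_{g_r^{-1}}\chi$; because $C(g_r(P,R,\psi))=\hat\alpha_{g_r}C(P,R,\psi)$ this is a well-defined unitary of right $\integers_2^{\oplus\G}/V\rtimes\G$-modules, and it is the natural index identification that turns $\mathcal H_{\mathcal C}$ into $\mathcal H_{(P,R,\psi)}\otimes\ell^2(V_v^{l,i,j})$ with the $r$-th basis vector of $\ell^2(V_v^{l,i,j})$ corresponding to the $r$-th translate. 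The key computation is then: apply $A=\sum_{s}T_{s^{-1}}M_{G_s}$ (using the $G_s$ description of Proposition~\ref{prop:precise_A}) to a vector in $\mathcal H_{g_r(P,R,\psi)}$ and read off, via the table/Proposition~\ref{prop:precise_A}, into which $\mathcal H_{g_{r'}(P,R,\psi)}$ it lands and with what scalar weight. By construction of $G_s$ (``move along the path'' with weight $2$ interior/at-a-good-end and $\specialvalue$ next to a bad point, $0$ otherwise — Remark~\ref{rem:colloquial_A}), the only nonzero contributions move $r$ to $r\pm1$, and the scalar is exactly the weight of the corresponding edge of $V^{l,i,j}$; the boundary behaviour ($2$ at a good end, $\specialvalue$ next to a bad neighborhood, and nothing beyond because the path does not extend) is precisely what the definition of $V^{l,i,j}$ encodes in the three cases $(i,j)\in\{(1,1),(1,2),(2,2)\}$. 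Since none of these weights depends on the fibre coordinate $\chi$, the operator decouples as $\operatorname{id}_{\mathcal H_{(P,R,\psi)}}\otimes A^{l,i,j}$.

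Concretely the steps are: (1) enumerate $P$ and list the translates generating $\mathcal C$; (2) write down the explicit unitary $\mathcal H_{g_r(P,R,\psi)}\xrightarrow{\ \simeq\ }\mathcal H_{(P,R,\psi)}$ and check it respects the right module structure; (3) assemble these into $\mathcal H_{\mathcal C}\simeq\mathcal H_{(P,R,\psi)}\otimes\ell^2(V_v^{l,i,j})$; (4) verify $A$-invariance of $\mathcal H_{\mathcal C}$ by checking each $T_{s^{-1}}M_{G_s}$ maps each $\mathcal H_{g_r(P,R,\psi)}$ into the span of the neighbouring ones — this is where one must be careful that $G_s(\chi)$ genuinely vanishes except when $\chi$ and $s^{-1}\chi$ sit on consecutive vertices of the \emph{same} path $R$, so that no ``leakage'' into other classes $\mathcal C'$ or into $\mathcal H_0,\mathcal H_{i,\infty}$ occurs; (5) match the surviving scalars to the edge weights of $V^{l,i,j}$, case by case on $(i,j)$ and on whether a vertex is interior, a good end, or adjacent to a bad neighborhood.

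The main obstacle is step (4)–(5): one has to rule out all the spurious matrix entries. The delicate points are the endpoints — one must confirm that at a fate-$1$ end the walker really is at a lower endpoint so the relevant $\chi$ is \emph{interior good} (giving weight $2$ from entries like $2;0$ and $0;2$ in the table), and that at a fate-$2$ end the last vertex of $P$ lies at distance $1$ from the bad neighborhood so only the $\specialvalue$-entries survive and the path genuinely stops there (no edge beyond). One also has to check there is no diagonal term: $G_s(\chi)$ is never evaluated in a way that sends $\mathcal H_{g_r(P,R,\psi)}$ to itself, because $T_{s^{-1}}$ always shifts the $\G$-coordinate by one step, i.e.\ always changes $r$. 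Once these local checks are done the unitary equivalence is immediate, and the dimension bookkeeping of \eqref{eq:dim_of_relevant_subspace} guarantees the identification is dimension-preserving on each fibre.
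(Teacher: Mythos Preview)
Your overall strategy is the paper's: translate the summands $\mathcal H_{(P',R',\psi')}$ for $(P',R',\psi')\in\mathcal C$ onto a fixed $\mathcal H_{(P,R,\psi)}$, then read off the matrix of $A=\sum_s T_{s^{-1}}M_{G_s}$ from Proposition~\ref{prop:precise_A}/Remark~\ref{rem:colloquial_A} and match the surviving entries to the edge weights of $V^{l,i,j}$. Your care about the endpoints and the absence of diagonal terms is exactly what the paper leaves implicit by citing \cite{austin09:_count_to_quest_of_atiyah}.

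There is, however, a concrete error in step~(2). The intertwining unitary $\mathcal H_{(P,R,\psi)}\to\mathcal H_{(gP,gR,g\psi)}$ is simply $T_{g}$, i.e.\ \emph{left} translation on the $\G$-coordinate with no change to $\chi$: by the covariance relation~\eqref{eq:covariance} one has $T_g M_{1_{C(P,R,\psi)}}T_{g^{-1}}=M_{1_{\hat\alpha_g C(P,R,\psi)}}=M_{1_{C(g(P,R,\psi))}}$, and $T_g$ lies in the crossed-product algebra, hence is automatically a right-module map. Your ``right translation $h\mapsto hg_r$ together with $\chi\mapsto\hat\alpha_{g_r^{-1}}\chi$'' does not send $\mathcal H_{g_r(P,R,\psi)}$ to $\mathcal H_{(P,R,\psi)}$ (the condition $\hat\alpha_h\chi\in C(\cdot)$ is unchanged under that substitution), and a genuine right translation on $\G$ would not commute with the right action of the full semidirect product. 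Relatedly, the translates enumerating $\mathcal C$ are $g_r^{-1}\cdot(P,R,\psi)$, since one needs $e\in gP$, i.e.\ $g^{-1}\in P$; your $g_r\cdot(P,R,\psi)$ has the inverse in the wrong place. With these two sign/side corrections your steps (1)--(5) go through verbatim and coincide with the paper's proof.
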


\begin{proof}
The proof is essentially the same as for the corresponding statement
\cite[Proposition 3.12]{austin09:_count_to_quest_of_atiyah}.

First observe that for $g\in\G$ the operator $T_{g^{-1}}$ (which is a
Hilbert-$\integers_2^{\oplus\G}/V\rtimes\G$ isometry) maps
$\mathcal{H}_{(P,R,\psi)}$ isometrically to
$\mathcal{H}_{(g^{-1}P,g^{-1}R,\hat\alpha_{g^{-1}}\psi)}$. 

This implies that $A$, because of its shape, maps a vector in
$\mathcal{H}_{(P,R,\psi)}$ indeed to a linear combination of vectors in
$\mathcal{H}_{(sP,sR,s\psi)}$ for the generators $s$. However, inspection of the
functions $G_s$ in Proposition \ref{prop:precise_A} or Remark
\ref{rem:colloquial_A} shows that a non-zero contribution is obtained only if
$s\in P$. Consequently, $\mathcal{H}_{\mathcal{C}}$ for
$\mathcal{C}\in\Omega_{i,j}/\sim$ is $A$-invariant. Moreover, inspection of
\ref{prop:precise_A} further shows that $A$ maps one summand to the other
(up to identification with the unitary $T_g$) exactly with the weights as
described by $A^{l,i,j}$; details of the argument follow exactly as in
\cite[Proposition 3.12]{austin09:_count_to_quest_of_atiyah}.

Moreover, Proposition \ref{prop:precise_A} also shows that the operator is
zero on $\mathcal{H}_{(P,R,\psi)}$ if $\abs{R}\le 2$.

Note also that
$A_{|\mathcal{H}_0}=0$.
\end{proof}

\section{The finite dimensional models}

We will concentrate now on the particular eigenvalue $-2$. 

\begin{lemma} \label{4*}
The value $-2$ is an eigenvalue for $A^ {l,1,1}$ 
acting on $l^2 (V^{l,1,1} )$ only for $ l \equiv 1(3)$ and the eigenspace is  one dimensional
in this case.

The value $-2$ is never an eigenvalue for $A^{l,i,2}$ for $l\in\naturals$ and $i\in\{1,2\}$.
\end{lemma}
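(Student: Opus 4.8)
The plan is to compute the $(-2)$-eigenspace directly. Label the $l+1$ vertices of the path underlying $V^{l,i,j}$ by $0,1,\dots,l$ in order, and let $w_k$ be the weight of the edge joining $k-1$ and $k$, so that $w_1,w_l$ are the two boundary weights while $w_2=\dots=w_{l-1}=2$. A vector $x=(x_0,\dots,x_l)$ lies in the $(-2)$-eigenspace of $A^{l,i,j}$ precisely when
\[
w_1x_1=-2x_0,\qquad w_kx_{k-1}+w_{k+1}x_{k+1}=-2x_k\ (1\le k\le l-1),\qquad w_lx_{l-1}=-2x_l.
\]
At every vertex $k$ both of whose incident edges have weight $2$ this is the three-term recursion $x_{k+1}=-x_k-x_{k-1}$, whose characteristic polynomial $t^2+t+1$ has the two primitive cube roots of unity as roots. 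Hence any sequence satisfying it is $3$-periodic, and — since the recursion may be run in either direction — such a sequence vanishes identically as soon as two consecutive entries are zero.

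For $(i,j)=(1,1)$ all weights are $2$. The first equation gives $x_1=-x_0$, so the recursion produces the $3$-periodic sequence $x_0,-x_0,0,x_0,-x_0,0,\dots$, and the only remaining condition is $x_{l-1}=-x_l$, coming from the last vertex. Writing the consecutive pair $(x_{l-1},x_l)$ for the three residues of $l$, namely $(x_0,-x_0)$, $(-x_0,0)$, $(0,x_0)$ for $l\equiv 1,2,0\pmod3$, one sees that $x_{l-1}=-x_l$ holds exactly when $l\equiv1\pmod3$. In that case the eigenvector is completely determined by the scalar $x_0$, so the eigenspace is one-dimensional. (Equivalently, $A^{l,1,1}$ is twice the adjacency matrix of the path on $l+1$ vertices, with spectrum $\{4\cos\frac{k\pi}{l+2}:1\le k\le l+1\}$; the equation $4\cos\frac{k\pi}{l+2}=-2$ forces $3k=2(l+2)$, which has a — necessarily unique — admissible solution $k$ iff $3\mid l+2$.)

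For $j=2$ there is a boundary edge of weight $\frac12$; relabel so that it is $w_l=\frac12$ (then $w_1=2$ if $i=1$, and $w_1=\frac12$ if $i=2$). Assume first $l\ge3$. The recursion $x_{k+1}=-x_k-x_{k-1}$ holds at vertices $1,\dots,l-2$ when $i=1$ and at vertices $2,\dots,l-2$ when $i=2$; in either case the tail $x_1,\dots,x_{l-1}$ is $3$-periodic. The equation at vertex $l$ gives $x_l=-\frac14 x_{l-1}$, and substituting this into the equation at vertex $l-1$, which uses the interior weight $w_{l-1}=2$, yields $2x_{l-2}+\frac12\bigl(-\frac14 x_{l-1}\bigr)=-2x_{l-1}$, i.e.\ $x_{l-2}=-\frac{15}{16}x_{l-1}$. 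But in a $3$-periodic solution of the recursion the ratio $x_{l-2}/x_{l-1}$ takes only the finitely many values arising from one period — for $i=1$ these are $\{0,-1,\text{``}x_{l-1}=0\text{''}\}$, for $i=2$ (where $x_1=-4x_0$, $x_2=\frac{15}{4}x_0$, $x_3=\frac14 x_0$) they are $\{-\tfrac{16}{15},15,-\tfrac1{16}\}$ — and $-\frac{15}{16}$ is among none of them; in the residual case $x_{l-1}=0$ the constraint forces $x_{l-2}=0$ too, hence $x\equiv0$ by the reversibility noted above (the vertex‑$0$ equation then gives $x_0=0$). Finally the cases $l=2$, where the path has no interior edge at all, are settled by writing out the three defining equations by hand. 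Therefore $-2$ is never an eigenvalue of $A^{l,i,2}$.

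The whole argument is computational and presents no real obstacle; the only points demanding care are the bookkeeping at the two ends of the path — keeping straight which boundary weight equals $\frac12$ and at which vertices the clean recursion is valid — and separately disposing of the degenerate small‑$l$ configurations in which the interior of the path is empty.
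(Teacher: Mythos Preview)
Your proof is correct and follows essentially the same route as the paper: both reduce the eigenvalue problem to the three-term recursion $x_{k+1}+x_k+x_{k-1}=0$ with its $3$-periodic solutions and then test compatibility with the two boundary equations, the paper doing this uniformly with parameters $\alpha,\beta\in\{2,\tfrac12\}$ while you split into the cases $(1,1)$ and $j=2$. Your additional closed-form argument for the $(1,1)$ case via the path spectrum $4\cos\frac{k\pi}{l+2}$ is a pleasant alternative not in the paper.
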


\begin{proof}
We first study the kernel for the $l\times(l+1)$-matrix $
\left(\begin{smallmatrix}
  2 & \alpha & 0 & \dots\\ \alpha & 2 & 2 & 0 &\dots\\
  0 & 2 & 2 & 2 & 0 & \dots\\
  \dots\\
  & & & \dots & 0 & 2 & 2 & \beta\\
\end{smallmatrix}\right)$ obtained by deleting the last row, and where
$\alpha,\beta\in \{1,\specialvalue\}$. A simple linear recursion shows that
this kernel is $1$-dimensional and spanned by the vector
\begin{equation*}
  (2\alpha, -4,4-\alpha^2, \alpha^2, -4,\ 4-\alpha^2,\dots, x_l),
\end{equation*}
  with 
  \begin{equation*}
x_l=
  \begin{cases}
   2 \alpha^2\beta^{-1} & l\equiv 0 \pmod 3\\
    -8\beta^{-1} & l\equiv 1\pmod 3\\
    2(4-\alpha^2)\beta^{-1} & l\equiv 2\pmod 3.
  \end{cases}  
\end{equation*}
The kernel of $A^{l,i,j}$ is non-trivial if and only if this vector is also
mapped to zero by the last row of $A^{l,i,j}$, which is simply the condition
\begin{equation*}
  \begin{cases}
    \beta(4-\alpha^2)+4\beta^{-1}\alpha^2 =0 & l\equiv 0\pmod 3\\
    \beta\alpha^2-16\beta^{-1}= 0 & l\equiv 1\pmod 3 \\
    -4\beta+4(4-\alpha^2)\beta^{-1}=0 & l\equiv 2\pmod 3.
  \end{cases}
\end{equation*}

If $i=j=1$, i.e.~$\alpha=\beta=2$, this is satisfied if and only if $l\equiv
1\pmod 3$. However, if $\alpha=\specialvalue$ and either $\beta=1$ or
$\beta=\specialvalue$ we check that the condition is never satisfied.
This finishes the proof.
\end{proof}
\section{Dual measures on $\mathbb{Z}_2^{\G}$}\label{sec:measures}

\cite[Lemma 5.1]{austin09:_count_to_quest_of_atiyah} extends readily to our situation:

\begin{lemma}\label{L - 71}
Given a subgroup $V \leq \mathbb{Z}_2^{\oplus \G}$, a finite subset $E \subset
\G$, and $\psi:E\to\mathbb{Z}_2$, let  
$C(\psi)$ be the set of characters $\chi\in\mathbb{Z}_2^{\G}$  such that $\chi_{|E} = \psi$. If $C(\psi)\cap V^\perp \neq \emptyset$, then
\[
m_{V^\perp}(C(\psi)) = {1\over |\{\psi' \in \mathbb{Z}_2^{E}:\ C(\psi')\cap V^\perp \neq \emptyset\}|}.
\]
\end{lemma}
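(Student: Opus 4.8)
The plan is to exploit the homogeneity of the Haar measure $m_{V^\perp}$ on the compact group $V^\perp$, together with the observation that the sets $C(\psi')$, as $\psi'$ ranges over those functions in $\integers_2^E$ for which $C(\psi')\cap V^\perp\ne\emptyset$, partition $V^\perp$ into finitely many pieces that are permuted transitively by a group of measure-preserving translations. Concretely, I would first note that $C(\psi)\cap V^\perp$ and $C(\psi')\cap V^\perp$ are either disjoint (if $\psi\ne\psi'$) or equal, and that $V^\perp=\bigsqcup_{\psi'} (C(\psi')\cap V^\perp)$ where the union is over the nonempty ones; hence $\sum_{\psi'} m_{V^\perp}(C(\psi')) = m_{V^\perp}(V^\perp)=1$.

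The key step is then to show that all the nonempty pieces have the \emph{same} measure. For this I would argue as follows: the restriction map $\integers_2^{\G}\to\integers_2^{E}$ restricted to $V^\perp$ has image exactly $\{\psi'\in\integers_2^E : C(\psi')\cap V^\perp\ne\emptyset\}$, and this image is a subgroup $W$ of $\integers_2^E$ (since $V^\perp$ is a subgroup and restriction is a homomorphism). The fibres of the surjection $V^\perp\onto W$ are precisely the nonempty sets $C(\psi')\cap V^\perp$, and they are the cosets of the kernel $V^\perp\cap C(0)$ (where $0$ denotes the zero function on $E$). Since translation by any element of $V^\perp$ preserves $m_{V^\perp}$ and permutes these cosets transitively, all fibres have equal measure, namely $1/|W|$. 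Combining this with $\sum_{\psi'} m_{V^\perp}(C(\psi'))=1$ gives $m_{V^\perp}(C(\psi))=1/|W|=1/|\{\psi'\in\integers_2^E : C(\psi')\cap V^\perp\ne\emptyset\}|$, as claimed.

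I anticipate that the only genuinely delicate point is the measurability/identification bookkeeping: one must be careful that ``$m_{V^\perp}(C(\psi))$'' really means $m_{V^\perp}(C(\psi)\cap V^\perp)$ (with $V^\perp$ carrying its own normalized Haar measure), and that $C(\psi)\cap V^\perp$ is genuinely a coset of the closed subgroup $C(0)\cap V^\perp$ — which it is, since if $\chi_0\in C(\psi)\cap V^\perp$ then $C(\psi)\cap V^\perp = \chi_0 + (C(0)\cap V^\perp)$, using that $\integers_2$ is written additively so that $\chi_{|E}=\psi \iff (\chi-\chi_0)_{|E}=0$. Everything else is the standard fact that a compact group acting transitively on the finite set of cosets of a finite-index closed subgroup, by measure-preserving translations, forces those cosets to have equal Haar measure. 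This is exactly the content of \cite[Lemma 5.1]{austin09:_count_to_quest_of_atiyah}, and the proof there transfers verbatim since the only ingredients used are that $V^\perp$ is a compact group, $m_{V^\perp}$ its Haar probability measure, and $E$ finite.
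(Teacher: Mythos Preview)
Your proof is correct and follows essentially the same approach as the paper: both arguments show that any two nonempty pieces $C(\psi_1)\cap V^\perp$ and $C(\psi_2)\cap V^\perp$ have equal Haar measure by translating one to the other via $\chi_2-\chi_1\in V^\perp$, and then conclude by the partition of $V^\perp$ into these pieces. Your version packages this more structurally (identifying the restriction $V^\perp\to\integers_2^E$ as a group homomorphism with image $W$ and fibres the cosets of $C(0)\cap V^\perp$), whereas the paper does it in one line by directly exhibiting the translation, but the content is the same.
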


\begin{proof} Given $\psi_1,\psi_2 : E\to\mathbb{Z}_2$, it is enough to show that 
\[
m_{V^\perp}(C(\psi_1))=m_{V^\perp}(C(\psi_2))
\] 
whenever both $C(\psi_1)$ and $C(\psi_2)$ intersect $V^\perp$. Take $\chi_i\in C(\psi_i)\cap V^\perp$. Then translation by
$\chi_2-\chi_1$ sends $C(\psi_1)\cap V^\perp$ to $C(\psi_2)\cap V^\perp$ and preserves the measure $m_{V^\perp}$. 
\end{proof}

We also remark that if $C(\psi)\cap V^\perp = \emptyset$, then certainly $m_{V^\perp}(C(\psi))=0$.

\bigskip

Given a finite subset $F\subset \G$ and a subgroup $\Lambda\subset \G$, we define a left invariant subgroup $V_{F,\Lambda}$ of $\mathbb{Z}_2^{\oplus \G}$ in the following way:
\begin{equation}\label{def_of_V}
V_{F,\Lambda}=\mathrm{span}_{\mathbb{Z}_2} \{ \mathbf{1}_{gF} -
\mathbf{1}_{gtF}\mid \ g\in \G, t\in \Lambda \}, 
\end{equation}
where $\mathbf{1}_F$ is the characteristic function of the set $F$. 

Setting $\chi(F):=\sum_{v\in F} \chi(v)$, we have
 \begin{equation}\label{def_of_Vperp}
V_{F,\Lambda}^\perp =\{ \chi \in \mathbb{Z}_2^{\G}\mid \chi(gF)=\chi(gtF),\;\forall g\in \G, t\in \Lambda \}.
\end{equation} 

We will need furthermore that for certain finite subsets $F,G\subset \G$ as
considered in below the family
\begin{equation}\label{linearly indep}
\{\mathbf{1}_{gF}\mid g\in G\}\text{ is linearly independent over }\mathbb{Z}_2.
\end{equation} 
Since over $\mathbb{Z}_2$ any linear combination of elements in
$\{\mathbf{1}_{gF}\mid g\in G\}$ is of the form
\[
\sum_{g\in H} \mathbf{1}_{gF}=\big(\sum_{g\in H} \mathbf{1}_g\big)\big (\sum_{f\in F} \mathbf{1}_f\big
);\qquad H\subset G,
\]
this linear independence is guaranteed if the group algebra
$\mathbb{Z}_2[\G]$ has no non-trivial zero divisor, 
namely that the zero divisor conjecture for $\G$ holds over
$\mathbb{Z}_2$. This is satisfied for the examples we discuss in this paper
(namely   $\integers\wr\integers$ or the free group) in fact there are no
known torsion--free counterexamples.

\begin{definition}\label{Def:extension}
Let $E,F$ be subsets of $\G$ and $\Lambda\leq \G$ be a subgroup. We say that
$E$  has the \emph{extension property relative to $(F,\Lambda)$} if whenever
$\psi \colon E\to \mathbb{Z}_2$ satisfies
\begin{equation}\label{ext_cond}
\psi(gF)=\psi(gtF)\;\forall g\in \G,t\in \Lambda\;\text{such that }gF\cup gtF\subset E
\end{equation}
then exists $\chi\in V_{F,\Lambda}^\perp$ such that $\chi_{|E}=\psi$.  
\end{definition}

In other words, if the obvious set of conditions on $\psi$ is satisfied on $E$, then $\psi$ extends to an element of $V_{F,\Lambda}^\perp$. Given $E,F,\Lambda,\G$ as in Definition \ref{Def:extension}, we let $\Omega_{F,E}$ be the set 
\[
 \Omega_{F,E}:=\{gF\subset E \mid \ g\in \G\}.
 \] 
We denote by $\Omega_{F,E}/\Lambda$ the set of classes of the equivalence
relation $\sim_\Lambda$ on $\Omega_{F,E}$ given by right multiplication by
$\Lambda$ on $\G$, namely
\[
gF\sim_\Lambda g'F \quad\iff\quad \exists t\in\Lambda\colon
gtF=g'F\quad\iff \quad g^{-1}g'\in \Lambda.
\]
This equivalence holds as $\G$ is torsion free and $F$ is finite.

The following is a generalization of \cite[Corollary
5.9]{austin09:_count_to_quest_of_atiyah} with the same proof.

\begin{lemma}\label{lem:number_of_extendables}
Let $E,F$ be finite subsets of $\G$ and $\Lambda\leq \G$ be a subgroup. Assume that $E$ has the extension property relative to $(F,\Lambda)$. Then
\[
 |\{\psi \in \mathbb{Z}_2^{E}\mid \ C(\psi)\cap V_{F,\Lambda}^\perp \neq \emptyset\}|=2^{|E|-K}
\]
where $K=|\Omega_{F,E}|-|\Omega_{F,E}/\Lambda|$ and with $C(\psi)$ as in Lemma
\ref{L - 71}.
\end{lemma}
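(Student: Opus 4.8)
The plan is to count directly the number of functions $\psi\in\mathbb{Z}_2^E$ for which $C(\psi)\cap V_{F,\Lambda}^\perp\neq\emptyset$, using the extension property to reduce this to a count of functions satisfying the obvious compatibility conditions on $E$. First I would observe that, by the extension property relative to $(F,\Lambda)$, a function $\psi\colon E\to\mathbb{Z}_2$ has $C(\psi)\cap V_{F,\Lambda}^\perp\neq\emptyset$ \emph{if and only if} it satisfies condition \eqref{ext_cond}, i.e.\ $\psi(gF)=\psi(gtF)$ for all $g\in\G$, $t\in\Lambda$ with $gF\cup gtF\subset E$. (One direction is the extension property; the converse is immediate from \eqref{def_of_Vperp}, since any $\chi\in V_{F,\Lambda}^\perp$ restricting to $\psi$ forces these equalities.) So the quantity to compute is $|\{\psi\in\mathbb{Z}_2^E : \psi\text{ satisfies }\eqref{ext_cond}\}|$.

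Next I would analyze the system of linear equations \eqref{ext_cond} over $\mathbb{Z}_2$ in the $|E|$ unknowns $\{\psi(x)\}_{x\in E}$. Each unknown is $\psi(x)\in\mathbb{Z}_2$, and each constraint has the form $\chi(gF)-\chi(gtF)=0$, i.e.\ $\sum_{v\in gF}\psi(v)+\sum_{v\in gtF}\psi(v)=0$, which only makes sense (appears in \eqref{ext_cond}) when $gF,gtF\subset E$, i.e.\ when $gF,gtF\in\Omega_{F,E}$. The key structural point is that these constraints simply say: the value $\psi(gF):=\sum_{v\in gF}\psi(v)$ depends only on the class of $gF$ in $\Omega_{F,E}/\Lambda$ (here I use that $gF\sim_\Lambda g'F\iff g^{-1}g'\in\Lambda$, and transitivity within a class, which follows because the relation generated by single $t\in\Lambda$-steps is exactly $\sim_\Lambda$). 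Equivalently: introduce one ``sum variable'' $\sigma_\mathcal{O}\in\mathbb{Z}_2$ for each $\mathcal{O}\in\Omega_{F,E}$, the constraints force $\sigma_{\mathcal{O}}=\sigma_{\mathcal{O}'}$ whenever $\mathcal{O}\sim_\Lambda\mathcal{O}'$, so the number of independent linear conditions among the $\sigma$'s is $|\Omega_{F,E}|-|\Omega_{F,E}/\Lambda|=K$.

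It then remains to argue that these $K$ conditions are genuinely independent as linear forms in the $\psi(x)$, so that the solution space has dimension exactly $|E|-K$ and hence cardinality $2^{|E|-K}$. This is the main obstacle: a priori the linear forms $\chi\mapsto\chi(gF)-\chi(gtF)$ in the variables $\psi(x)$, $x\in E$, could satisfy extra relations beyond those visible at the level of the sum-variables $\sigma_\mathcal{O}$. The cleanest way I see to rule this out is to exhibit $2^{|E|-K}$ solutions explicitly: pick a set of representatives for $\Omega_{F,E}/\Lambda$, and note that the constraints only ever involve $\psi$ restricted to $\bigcup_{gF\in\Omega_{F,E}}gF$; choose an ordering/spanning analysis showing that one can freely assign $\psi$ on a complement of size $|E|-K$ and then the remaining values are forced and consistent. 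Concretely, I expect the argument given in \cite[Corollary 5.9]{austin09:_count_to_quest_of_atiyah} handles exactly this bookkeeping --- pick, for each $\Lambda$-class of translates $gF$, a ``last'' generator within the class and use it to solve for one coordinate --- and our situation is formally identical since $\Omega_{F,E}$, $\Omega_{F,E}/\Lambda$ and the defining equations \eqref{def_of_Vperp} have the same shape. Finally, combining $|\{\psi : C(\psi)\cap V_{F,\Lambda}^\perp\neq\emptyset\}|=2^{|E|-K}$ with the extension property (which guarantees every such $\psi$ does extend, so none of these $2^{|E|-K}$ candidates is vacuous) yields the claim.
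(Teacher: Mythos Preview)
Your approach is essentially the paper's: use the extension property to identify the extendable $\psi$'s with those satisfying the local linear constraints \eqref{ext_cond}, view this set as the orthogonal complement in $\mathbb{Z}_2^E$ of
\[
W:=\mathrm{span}_{\mathbb{Z}_2}\{\mathbf{1}_{gF}-\mathbf{1}_{g'F}:gF,g'F\in\Omega_{F,E},\ gF\sim_\Lambda g'F\},
\]
and conclude from $\dim W=K$. The paper is equally terse on this last step, simply recording $\dim W=\sum_{C\in\Omega_{F,E}/\Lambda}(|C|-1)=K$ and deferring to Austin's Corollary~5.9.

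One point of confusion in your write-up: you correctly flag the independence of the $K$ generators of $W$ as the delicate step, but your proposed fix is stated backwards. ``Exhibiting $2^{|E|-K}$ solutions'' only yields $|W^\perp|\geq 2^{|E|-K}$, which is automatic since $W$ is spanned by $K$ vectors. The direction that needs work is $|W^\perp|\leq 2^{|E|-K}$, i.e., that those $K$ differences are linearly independent in $\mathbb{Z}_2^E$. Your follow-up idea---a parametrization in which $K$ coordinates are \emph{forced} by the remaining $|E|-K$---would indeed establish this upper bound if carried out, but ``forced'' is precisely the independence claim you set out to prove, and you do not execute it. In the end your argument and the paper's are at the same level of detail on this point.
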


\begin{proof}
Since $E$ has the extension property relative to $(F,\Lambda)$, the subset
\[
\{\psi \in \mathbb{Z}_2^{E}\mid \ C(\psi)\cap V_{F,\Lambda}^\perp \neq \emptyset\}
\]
coincides with
\[
\{\psi \in \mathbb{Z}_2^{E}\mid\psi(gF)=\psi(g'F)\, \text{whenever } gF,g'F\in \Omega_{F,E},\, gF\sim_\Lambda g'F\}.
\]
The latter is the annihilator of the finite dimensional subspace
\[
V_{E,F,\Lambda}:=\mathrm{span}_{\mathbb{Z}_2}\{\mathbf{1}_{gF} - \mathbf{1}_{g'F}\mid \  gF,g'F\in \Omega_{F,E},\, gF\sim_\Lambda g'F\}.
\]
By (\ref{linearly indep}) this span is the direct sum over the equivalence
classes $C\in \Omega_{F,E}/\Lambda$ of
$V_C:=\mathrm{span}_{\mathbb{Z}_2}\{\mathbf{1}_{gF} - \mathbf{1}_{g'F}\mid \
gF,g'F\in C\}$ and again by \eqref{linearly indep}
$\dim_{\integers_2}(V_C)=\abs{C}-1$. It follows that
\[
K:=\dim_{\integers_2}(V_{E,F,\Lambda})=\sum_{C\in \Omega_{F,E}/\Lambda} |C|-1=|\Omega_{F,E}|-|\Omega_{F,E}/\Lambda|.
\]
Therefore,
\[
\dim_{\mathbb{Z}_2} \{\psi \in \mathbb{Z}_2^{E}:\ C(\psi)\cap V_{F,\Lambda}^\perp \neq \emptyset\}=|E|-K,
\]
hence the result.
\end{proof}

\begin{corollary}\label{corol:explicit_measure_of_relevant_subspace}
  Assume, in the situation of Section \ref{sec:unitary_equiv}, that
  $V^\perp=V^\perp_{F,\Lambda}$ for $\Lambda\subset\G$ a subgroup and
    $F\subset\G$ finite 
    as above. Moreover, given $\psi\colon U(R,1)\to\integers_2$, assume that
    it extends to $V^\perp$ and that $U(R,1)$ satisfies the extension property
    for $F$. Then
  \begin{equation}
    \label{eq:the_dimension}
        \dim_{\integers_2^{\oplus\G}/V\rtimes\G}( \mathcal{H}_{P,R,\psi}) =
      2^{-\abs{U(R,1)}+K},
  \end{equation}
  where $K$ is the number of equivalence classes of subsets $gF$ of $U(R,1)$
  ($g\in\G$), 
  with $gF\sim gtF$ for $t\in\Lambda$.
\end{corollary}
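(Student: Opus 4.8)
The plan is to chain together the results already assembled in this section. First I would invoke equation \eqref{eq:dim_of_relevant_subspace}, which identifies $\dim_{\integers_2^{\oplus\G}/V\rtimes\G}(\mathcal{H}_{P,R,\psi})$ with $m_{V^\perp}(C_{(P,R,\psi)}\cap V^\perp)$. Since by hypothesis $\psi$ extends to an element of $V^\perp = V^\perp_{F,\Lambda}$, the set $C(\psi)\cap V^\perp$ is non-empty, so the first Lemma of Section \ref{sec:measures} applies with $E = B(R,1)$ and gives
\[
m_{V^\perp}(C(\psi)) = \frac{1}{\abs{\{\psi'\in\integers_2^{B(R,1)} : C(\psi')\cap V^\perp\neq\emptyset\}}}.
\]
This reduces the problem to counting the extendable functions on $B(R,1)$.

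Next I would apply Lemma \ref{lem:number_of_extendables} with $E = B(R,1)$: the standing assumption is precisely that $B(R,1)$ has the extension property relative to $(F,\Lambda)$, so the lemma yields that the denominator equals $2^{\abs{B(R,1)} - K}$ with $K = \abs{\Omega_{F,B(R,1)}} - \abs{\Omega_{F,B(R,1)}/\Lambda}$. Combining the two displays gives
\[
\dim_{\integers_2^{\oplus\G}/V\rtimes\G}(\mathcal{H}_{P,R,\psi}) = 2^{-\abs{B(R,1)} + K},
\]
which is exactly \eqref{eq:the_dimension}. The only remaining point is to check that the quantity $K$ appearing here matches the description given in the corollary, namely ``the number of equivalence classes of subsets $gF$ of $B(R,1)$ with $gF\sim gtF$'': unwinding the definition of $\Omega_{F,B(R,1)}/\Lambda$ as classes under $gF\sim_\Lambda g'F \Leftrightarrow g^{-1}g'\in\Lambda$, the difference $\abs{\Omega_{F,B(R,1)}} - \abs{\Omega_{F,B(R,1)}/\Lambda}$ counts, for each $\Lambda$-class $C$, the surplus $\abs{C}-1$, summed over all classes — the same bookkeeping that appears in the proof of Lemma \ref{lem:number_of_extendables}; so $K$ is indeed determined by the combinatorics of $F$-translates lying inside $B(R,1)$ and their identifications under $\Lambda$.

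This is essentially a routine concatenation of earlier statements, so there is no substantial obstacle; the one place calling for care is making sure the hypotheses line up — in particular that ``$B(R,1)$ satisfies the extension property for $F$'' is meant relative to the pair $(F,\Lambda)$ (with the $\Lambda$ fixed by $V^\perp = V^\perp_{F,\Lambda}$), and that the finite subset $E$ of Section \ref{sec:measures} is being instantiated as $B(R,1)$, which is finite because $R$ is finite in the regime $i,j<\infty$ under consideration. I would also note in passing that the slightly garbled phrase ``subsets $gE$ of $B(R,1)$'' in the corollary statement should read ``subsets $gF$ of $B(R,1)$'', consistent with $\Omega_{F,B(R,1)}$.
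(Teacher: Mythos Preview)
Your proposal is correct and follows the same route as the paper: the paper's own proof is a single sentence invoking \eqref{eq:dim_of_relevant_subspace} and Lemma~\ref{lem:number_of_extendables}, and you have simply unpacked that chain (including the intermediate use of the first lemma of Section~\ref{sec:measures}) more carefully. Your remark about the typo ``$gE$'' for ``$gF$'' is also apt.
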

\begin{proof}
  This is a direct consequence of Lemma \ref{L - 71} and Lemma
  \ref{lem:number_of_extendables}. 
\end{proof}

\section{Extension lemma}

We need a sufficiently general condition for deciding when a set $E$ has the
extension property relative to $(F,\Lambda)$. The following criterion is an
analog of \cite[Lemma 5.5]{austin09:_count_to_quest_of_atiyah}.





\begin{lemma} \label{ext}
Suppose that $s_1$ is of infinite order, $F\subset \{s_1^k\mid
k\in\integers\}\subset\G$ is finite and that
the subset $B \subset \G$ is \emph{horizontally connected}, i.e. 
$ \forall g \in \G$
$$ \{  g  s_1^k  \mid  k \in \mathbb{Z}  \}  \cap B       $$
is a connected segment.

Then $B$ has the extension property (as in Definition \ref{Def:extension}) relative to $(F,\Lambda)$ for any subgroup
$\Lambda$ of $\G$.
\end{lemma}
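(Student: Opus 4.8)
The plan is to prove the extension property by a direct inductive construction of the character $\chi\in V_{F,\Lambda}^\perp$ extending a given $\psi\colon B\to\integers_2$ that satisfies the compatibility conditions \eqref{ext_cond}. The key structural observation is that, since $F\subset\{s_1^k\mid k\in\integers\}$ consists of powers of the infinite-order element $s_1$, every translate $gF$ lies entirely inside a single horizontal line $\{hs_1^k\mid k\in\integers\}$, and the constraint $\chi(gF)=\chi(gtF)$ only ever relates values of $\chi$ along horizontal lines (possibly on different lines, since $t\in\Lambda$ need not be a power of $s_1$). The condition that $B$ is horizontally connected means that on each horizontal line $B$ meets that line in a (possibly empty, possibly infinite) interval.

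First I would fix an enumeration $\G=\{g_0,g_1,g_2,\dots\}$ and build $\chi$ by transfinite/ordinary recursion, defining $\chi(g_n)$ one point at a time; on $B$ we are forced to set $\chi=\psi$, and the hypothesis \eqref{ext_cond} guarantees this is consistent with all constraints whose support already lies in $B$. For a point $g\notin B$, I would argue that the set of constraints of the form $\chi(g'F)=\chi(g'tF)$ that become ``active'' — i.e.\ have all but one of their points already assigned — determines $\chi(g)$ by at most one linear equation over $\integers_2$, provided we process points in a good order. The crucial point is to choose the order so that no two distinct active constraints ever force incompatible values of $\chi(g)$: because $F$ is a set of horizontal powers, a single new point $g$ can be the ``last unknown'' of a constraint $gF$-vs-$gtF$ only along the horizontal direction, and by processing each horizontal line outward from its intersection with $B$ (extending the interval one step at a time, to the right and to the left independently) one ensures that at the moment $g$ is assigned, exactly one constraint is active along that line in that direction, namely the one coming from the $F$-translate that just ran off the already-defined part. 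This is where horizontal connectedness of $B$ is used: it guarantees the already-assigned part of each line is an interval, so ``the next point to define'' is unambiguous and only one $F$-block straddles the boundary.

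The main obstacle — and the step I would spend the most care on — is checking that this greedy assignment never runs into a conflict coming from constraints that link different horizontal lines (the $t\in\Lambda$ with $t\notin\langle s_1\rangle$ case) or from long-range horizontal constraints where $gF$ and $gtF$ lie on the same line but far apart. For the cross-line constraints, the point is that such a constraint $\chi(gF)=\chi(gtF)$ is fully supported off $B$ on \emph{both} blocks unless one block meets $B$; and when we extend a line, we only ever close up one block at a time, so a cross-line relation with both blocks outside $B$ simply gets to \emph{choose} the value on whichever endpoint is defined last, creating no conflict. For same-line long-range constraints, I would observe that the relation $\chi(gF)=\chi(gtF)$ with $gt^{-1}$... more precisely with both blocks on one line reduces, after telescoping along the chain of $\Lambda$-translates, to conditions already implied by the ones we enforce incrementally, because the conditions we impose span exactly the subspace $\mathrm{span}_{\integers_2}\{\mathbf 1_{gF}-\mathbf 1_{g'F}: gF\sim_\Lambda g'F\}$ and an elementary linear-algebra argument (as in the proof of Lemma \ref{lem:number_of_extendables}) shows the orthogonal complement is precisely the set of $\psi$-extendable functions. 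So the cleanest route may be: reduce to showing that the linear system ``$\chi|_B=\psi$ and $\chi(gF)=\chi(gtF)$ for all $g,t$'' is consistent, and prove consistency by exhibiting, via the horizontal sweep, that every finite subsystem is consistent — finite consistency plus a compactness argument on the profinite (or just pointwise-limit) space $\integers_2^\G$ then yields a global solution $\chi\in V_{F,\Lambda}^\perp$ with $\chi|_B=\psi$. I expect the write-up to hinge on making the ``finite subsystem consistency via horizontal sweep'' argument fully rigorous, handling the two-sided extension of each interval and the bookkeeping of which $F$-block is the unique one straddling the current boundary.
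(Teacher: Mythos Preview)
Your outline matches the paper's: exhaust $\G$ by an increasing chain $B=B_0\subset B_1\subset\cdots$ of horizontally connected sets, adding one point $h$ at a time, and extend $\psi$ step by step while maintaining condition \eqref{ext_cond} on each $B_n$. The observation that $h$ lies in at most one block $g_0F\subset B_{n+1}$ (since $F$ is horizontal and $h$ must be an endpoint of its segment) is exactly what the paper uses.

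The gap is in your consistency check. Your split into ``cross-line'' versus ``same-line'' constraints is a red herring, and your cross-line analysis (``both blocks outside $B$ \ldots choose the value on whichever endpoint is defined last'') misses the case that actually matters: $g_0tF$ already lies entirely inside the previously defined set $B_n$, regardless of whether it met the original $B$. Several such $t\in\Lambda$ may exist simultaneously, each forcing a value of $\chi(h)$, and you have not shown these agree. The paper disposes of this in one line: if $g_0tF,\,g_0t'F\subset B_n$, then since $t't^{-1}\in\Lambda$ and $B_n$ satisfies \eqref{ext_cond} by the inductive hypothesis, $\chi(g_0tF)=\chi(g_0t'F)$, so all forced values coincide. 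No telescoping, no case distinction by which horizontal line the blocks occupy, and no compactness is needed --- the direct induction already produces a global $\chi$, and what makes it work is precisely this transitivity via the maintained invariant on $B_n$, which your proposal never clearly invokes.
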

\begin{proof}
Let $B_n$ be an increasing sequence of subsets of $\G$ such that $B_0 = B$ and such that 
$B_{n+1}$ is obtained from $B_n$ by adding an element at distance 1 from $B_n$,
$B_{n+1}$ is horizontally connected and the union of $B_n$ is $\G$.
We construct a sequence of functions $\chi_n$ such that $\chi_0 = \psi$ and 
$\chi_{n+1}|_{B_{n+1}}  = \chi_n$ and the condition \eqref{ext_cond} is true
with $E=B_n$. This implies the existence
of the extension. Of course it suffices to give a construction of $\chi_1$.

Suppose that we add to $B$ one element $h$ to get $B_1=B\cup\{h\}$ and we want to construct
$\chi_1$. If $gF\subset B_1$
 and $h\in gF$ then by horizontal connectivity of $B$ and the special shape of $F$
the element $h$ is an end-point of $F$.

Then, again by horizontal connectivity of $B$ it is not possible that $h \in
g'F$ for some other $g'$ with $g'F\subset B_1$. 

If there is $ t\in\Lambda \setminus \{  e \}$ such that also $gtF\subset B_1$ then
necessarily $gtF\subset B$ and by~\eqref{ext_cond}
 $\chi_1(h)$ is imposed by
$$
\chi_1(h) = \chi(gF\setminus\{h\})-\chi(gtF).
$$
We only need to show that this is independent of the choice of $t$. Indeed, if for
$t,t'\in \Lambda$ both $gtF,gt'F\subset B$ then, as $t't^{-1}\in\Lambda$ and
because $B$ satisfies condition \eqref{ext_cond},
$\chi(gtF)=\chi(gt'F)$.

If no pair $gF,gtF$ as considered above exist, we can choose $\chi_1(h)$ at
will, e.g.~$\chi_1(h):=0$, as no
additional condition has to be satisfied for \eqref{ext_cond} to hold for $E=B_1$.
\end{proof}





\section{Subgroups $V_I$ and the effect on the eigenspaces}
\label{sec:special_subgroups}

As before, we consider a group $\G$ generated by $s_1,s_2$, but from now on we will mostly 
concentrate on the case of the free group or of $\integers\wr\integers$.

\begin{definition}\label{def:elements}
  For $n\in\naturals = \{ 1, 2, \ldots \}$, define $t_n:=s_2^ns_1s_2^{-n}$. For
  $I\subset\naturals$, define $\Lambda_I:=\langle t_i\mid i\in
  I\rangle\subgroup \G$.

  If $F\subset\G$ is finite, define $V_{F,I}:=\Span_{\integers_2}\{g\mathbf{1}_F-gt \mathbf{1}_F\mid g\in \G, t\in \Lambda_I \}\subset
  \integers_2^{\oplus\G}$ so its
  Pontryagin dual is
  \begin{equation*}
V_{F,I}^\perp =\{ \chi \in \mathbb{Z}_2^{\G}\mid
  \chi(gF)=\chi(gtF),\;\forall g\in \G, t\in \Lambda\}
\end{equation*}
 as in Definition
  \eqref{def_of_Vperp}. Finally, we specialize to $F_l=\{s_1^{-1},e,s_1\}$ and
  set $G_{I}:= \integers_2^{\oplus \G}/V_{F_l,I}\semiprod\G$. 
\end{definition}

\begin{lemma}\label{lem:relations}
  For 
  \begin{equation}\label{eq:ZwreathZ}
 \G=\integers\wr\integers=\langle s_1,s_2\mid
  [s_2^ks_1s_2^{-k},s_1]=1\; \forall k\in\integers\rangle
  \end{equation}
 the elements $t_n$
  are the free abelian generators of a free abelian subgroup, equal to the
  kernel of the obvious projection to $\integers = \langle s_2\rangle$. 

  For $\Gamma$ the free group on free generator $s_1,s_2$, the elements $t_n$
  are the free generators of a free subgroup.

  In particular, if $0\notin I$ then $\Lambda_I$ intersects $\langle
  s_1\rangle$, the subgroup generated by $s_1$, only in the trivial element.
\end{lemma}
\begin{proof}
  For $\integers\wr\integers$ this is part of the structure theory of the
  wreath product: the base 
  $\integers^{\oplus\integers}$ is a free abelian group with generators
  $s_2^ns_1s^{-n}_2$ for $n\in\integers$. The group $\langle
  s_2\rangle=\integers$ acts on the base by the obvious permutation of the
  basis elements, the semi-direct product is $\integers\wr\integers$.

  For the free group, the assertion follows from an easy normal form
  calculation, which is carried out in detail in \cite[Lemma
  5.2]{austin09:_count_to_quest_of_atiyah}. 
\end{proof}

\begin{proposition}\label{prop:identifications_in_hooks}
  Assume that $P=\left \{
gs_2^{-n}, \ldots, g , g s_1, g s_1 s_2^{-1}, \ldots , g s_1 s_2^{-m}
\right \}$ is a hook as in \eqref{eq:def_of_hook} with $n,m\in\naturals$
and $I\subset\naturals$ is given. Assume moreover that $\G$ is either the free
group on free generators $s_1,s_2$ or $\G=\integers\wr\integers$ as in
\eqref{eq:ZwreathZ}.

Then $xt=y$ for $x\ne y\in P$ and $t\in\Lambda_I$ exactly if $t=t_k$ for some
generator $t_k$ with $k\in I$ such that $k\le n$ and $k\le m$,
$x=gs_2^{-k}$, $y=gs_1s_2^{-k}$ (or $t=t_k^{-1}$,  $x=gs_1s_2^{-k}$,
$y=gs_2^{-k}$). 
\end{proposition}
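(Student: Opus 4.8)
The plan is to analyze the equation $xt=y$ with $x,y\in P$ and $t\in\Lambda_I$ by projecting to the quotient $\integers=\langle s_2\rangle$ (which makes sense both for $F_2$ and for $\integers\wr\integers$, in the latter case by Lemma~\ref{lem:relations}, where the $t_k$ lie in the kernel of this projection). Writing the vertices of $P$ in the normal form given by \eqref{eq:def_of_hook}, namely $gs_2^{-j}$ for $0\le j\le n$ on the left leg and $gs_1s_2^{-j}$ for $0\le j\le m$ on the right leg, I would first record their images under the projection: $gs_2^{-j}$ maps to (image of $g$) $-j$, and $gs_1s_2^{-j}$ also maps to (image of $g$) $-j$, since $s_1$ is killed. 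Since $t$ projects to $0$, the equation $xt=y$ forces $x$ and $y$ to have the same image in $\integers$, hence the same ``height'' $j$; so the only candidate pairs are $x=gs_2^{-j}$, $y=gs_1s_2^{-j}$ (or vice versa) for a common $j$ with $0\le j\le n$ and $0\le j\le m$. The case $x=y$ is excluded, and the case where both $x,y$ lie on the same leg is excluded because then $y=xt$ with $t$ a power of $s_2$, contradicting $t\in\Lambda_I\subset\ker$ unless $t=e$.

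Next I would solve for $t$ in the remaining case: $t=x^{-1}y=s_2^{j}g^{-1}gs_1s_2^{-j}=s_2^{j}s_1s_2^{-j}=t_j$. So $t=t_j$, and for this to lie in $\Lambda_I=\langle t_i\mid i\in I\rangle$ we need $t_j\in\Lambda_I$. Here I would invoke the structure of $\Lambda_I$: in both cases the $t_k$ ($k\in\naturals$, or $k\in\integers$) are part of a free generating set — free generators of a free group in the $F_2$ case, free abelian generators in the $\integers\wr\integers$ case by Lemma~\ref{lem:relations} — so $\Lambda_I=\langle t_i\mid i\in I\rangle$ contains $t_j$ if and only if $j\in I$. (In the free case this uses that a subgroup of a free group generated by part of a free basis is free on that part and meets the rest of the basis trivially; in the abelian case it is immediate.) Combining, $xt=y$ with $x\ne y$ happens exactly when $t=t_j$ with $j\in I$, $j\le n$, $j\le m$, $x=gs_2^{-j}$, $y=gs_1s_2^{-j}$; the symmetric solution $t=t_j^{-1}$, $x=gs_1s_2^{-j}$, $y=gs_2^{-j}$ comes from swapping the roles of $x$ and $y$. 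Renaming $j$ as $k$ gives exactly the statement.

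I expect the main obstacle to be the case analysis for $\G=\integers\wr\integers$: there the normal form of elements is less transparent than in $F_2$, so one must be careful that the listed vertices of $P$ really are distinct and that their images in $\integers$ are as claimed, and that no ``accidental'' relation among the $t_k$ forces $t_j\in\Lambda_I$ for $j\notin I$. Lemma~\ref{lem:relations} is precisely what rules this out, so the argument reduces to applying it correctly; the free-group case is then a (slightly easier) parallel. A secondary point to handle cleanly is the bookkeeping of which heights $j$ actually occur on \emph{both} legs of the hook, giving the constraints $k\le n$ and $k\le m$ — this just requires reading off the index ranges in \eqref{eq:def_of_hook} and noting that $P$ has a genuine corner at $g$ and $gs_1$ so that heights $0$ appear on both legs but only heights up to $\min(n,m)$ are shared.
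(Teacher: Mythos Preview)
Your proof is correct and follows essentially the same line as the paper: project to $\langle s_2\rangle$ to force $x$ and $y$ to sit at the same height, then compute $x^{-1}y=t_j$ and use that the $t_k$ are free (resp.\ free abelian) generators to conclude $t_j\in\Lambda_I$ iff $j\in I$. The only stylistic difference is that the paper observes that $\integers\wr\integers$ is a quotient of $F_2$, so it suffices to verify necessity in $\integers\wr\integers$ alone; you instead treat the two groups in parallel, which is slightly more work but equally valid and arguably more explicit.
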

\begin{proof}
  Obviously, the $t_k,x,y$ we have given satisfy all the conditions in both
  cases.

  We next show that the conditions are necessary. Write $x=g s_1^{\epsilon_x} s_2^{a_x}$ and
  $y=gs_1^{\epsilon_y}s_2^{a_y}$ in $P$, with $\epsilon_x,\epsilon_y\in
  \{0,1\}$ and $a_x,a_y\in\integers$. If $x^{-1}y\in\Lambda_I$,
  then 
  in particular $x^{-1}y=s_2^{-a_x} s_1^{\epsilon_y-\epsilon_x}s_2^{a_y}$ is
  mapped to the trivial element under the 
  projection to the infinite cyclic group $\langle s_2\rangle$ which maps
  $s_1$ to $1$. It 
  follows that $a_y=a_y$, so $x^{-1}y=s_2^{-a_x} s_1^{\epsilon_y-\epsilon_x}s_2^{a_x}$.
 In the two cases we study, $\Lambda_I$ is contained either in the free or the
 free
 abelian groups on generators $s_2^vs_1s_2^{-v}$ 
  ($v\in\integers$). The assertion now follows.
\end{proof}

\begin{proposition}\label{lem:extensions_exist}
  As before, assume $\G$ generated by $s_1,s_2$ is either
  $\integers\wr\integers$ or the free group on $s_1,s_2$. Set $F_l=\{s_1^{-1},e,s_1\}$ and
  let $R\subset\G$ be a hook, $\psi\colon U(R,1)\to\integers_2$ the
  characteristic function of $R$. Then $\psi$ extends to
  $V^\perp_{F_l,I}$. Moreover, $gF_l,hF_l\subset U(R,1)$ are equivalent if and only
  if $g,h\in R$ and there is $t\in \Lambda_I$ with $g=ht$.
\end{proposition}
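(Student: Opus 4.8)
The plan is to verify the two assertions in turn, relying on the Extension Lemma (Lemma~\ref{ext}) for the first and on Proposition~\ref{prop:identifications_in_hooks} for the second. First I would check that $B(R,1)$ is horizontally connected in the sense of Lemma~\ref{ext}, i.e.\ that for every $g\in\G$ the intersection of the horizontal line $\{gs_1^k\mid k\in\integers\}$ with $B(R,1)$ is a connected segment. Since $R$ is a hook, $R$ itself consists of two vertical legs joined by a single horizontal edge $\{g,gs_1\}$ (in the notation of \eqref{eq:def_of_hook}); passing to the $1$-neighborhood $B(R,1)$ adds at most the horizontal neighbors $gs_1^{\pm1}$ of each point of $R$. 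The only horizontal lines that meet $R$ non-trivially are the one through $g$ and $gs_1$ (which contributes the segment $\{gs_1^{-1},g,gs_1,gs_1^2\}$ to $B(R,1)$) and the horizontal lines through the leg points $gs_2^{-k}$ and $gs_1s_2^{-k}$, each of which contributes a single point of $R$ together with its two horizontal neighbors — again a connected segment of length three. Every other horizontal line meets $B(R,1)$ in at most one point. So $B(R,1)$ is horizontally connected, and since $F_l=\{s_1^{-1},e,s_1\}\subset\{s_1^k\mid k\in\integers\}$ is finite and $s_1$ has infinite order (true both for $F_2$ and for $\integers\wr\integers$), Lemma~\ref{ext} applies and gives the extension property of $B(R,1)$ relative to $(F_l,\Lambda_I)$. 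The characteristic function $\psi$ of $R$ visibly satisfies the compatibility condition \eqref{ext_cond} — I would note that whenever $gF_l$ and $gtF_l$ both lie inside $B(R,1)$ with $t\in\Lambda_I$, Proposition~\ref{prop:identifications_in_hooks} (which needs $gF_l,gtF_l\subset R$, but here $B(R,1)$ only meets $R$ plus horizontal fringe, so translates of $F_l=\{s_1^{-1},e,s_1\}$ lying in $B(R,1)$ have their center in $R$) forces $g,gt\in R$ with $g=g's_2^{-k}$, $gt=g's_1s_2^{-k}$, and then $\psi(gF_l)=\psi(gtF_l)$ because in both triples exactly one of the three points (the center) lies on $R$. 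Hence $\psi$ extends to an element of $V^\perp_{F_l,I}$.

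For the second assertion, I would argue that a translate $gF_l=\{gs_1^{-1},g,gs_1\}$ is contained in $B(R,1)$ only if its center $g$ lies on $R$: the points of $B(R,1)\setminus R$ are horizontal fringe points $hs_1^{\pm1}$ with $h\in R$, and none of these has both horizontal neighbors in $B(R,1)$, so a full horizontal triple inside $B(R,1)$ must be centered at a point of $R$ (in fact centered at one of the leg points or at $g$ or $gs_1$ in the hook notation). Given this, the equivalence relation on $\Omega_{F_l,B(R,1)}/\Lambda_I$ restricted to these translates is exactly the relation $gF_l\sim hF_l\iff g^{-1}h\in\Lambda_I$ with $g,h\in R$, which by Proposition~\ref{prop:identifications_in_hooks} is non-trivial precisely for the pairs $g=g's_2^{-k}$, $h=g's_1s_2^{-k}$ with $t_k\in\Lambda_I$, $k\le n,m$ — i.e.\ exactly the statement $g,h\in R$ with $g=ht$ for some $t\in\Lambda_I$.

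The main obstacle I anticipate is the bookkeeping in the claim that every translate $gF_l\subset B(R,1)$ is centered on $R$: one has to be careful that the two legs of the hook, while vertical, sit on distinct horizontal lines (so their fringe points do not accidentally line up into a horizontal triple), and that the single horizontal edge $\{g,gs_1\}$ of the hook together with its fringe $\{gs_1^{-1},gs_1^2\}$ does not produce translates of $F_l$ centered off $R$ in some degenerate short-hook case $n=1$ or $m=1$. Both of these are handled by the hypothesis $n,m\in\naturals_{>0}$ and the explicit shape \eqref{eq:def_of_hook}, but the verification is the one genuinely case-sensitive point; everything else is a direct application of the two results cited.
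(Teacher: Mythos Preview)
Your proposal is correct and follows essentially the same route as the paper's proof: both invoke Lemma~\ref{ext} to reduce the extension statement to checking the compatibility condition~\eqref{ext_cond}, verify that compatibility via Proposition~\ref{prop:identifications_in_hooks} by observing that equivalent pairs $xF_l,yF_l$ must be centered on leg points (not the bend) so that $\psi(xF_l)=\psi(yF_l)=1$, and establish the second assertion from the observation that any $gF_l\subset B(R,1)$ has its center $g$ in $R$. The only substantive difference is that you spell out the horizontal connectedness of $B(R,1)$ needed to apply Lemma~\ref{ext}, which the paper leaves implicit; your anticipated bookkeeping ``obstacle'' is exactly the normal-form argument the paper invokes in one line.
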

\begin{proof}
  By a normal form argument, we know that whenever $gF_l\subset U(R,1)$ then
  $g\in R$. By Lemma \ref{ext} we only have to check that, whenever $xF_l$ and
  $yF_l$ for $x,y\in R$ are equivalent, then $\psi(xF_l)=\psi(yF_l)$. By Proposition
  \ref{prop:identifications_in_hooks} (and normal form in $\G$), if $xF_l$ and $yF_l$
  are equivalent then $\abs{xF_l\cap U(R,1)}=1=\abs{yF_l\cap U(R,1)}$: the
    intersection would have different cardinality only if $x$ (or $y$) was part of the
    ``bend'' of the hook, i.e.~$xs_1$ or $xs_1^{-1}\in R$, but then $x$ is
    only equivalent to itself. Finally $\psi(xF_l)\equiv\abs{xF_l\cap U(R,1)} \pmod 2$,
      therefore $\psi(xF_l)=\psi(yF_l)$ and the proposition follows.
\end{proof}

\begin{corollary}\label{coro:final_dim}
  Adopt the situation of Proposition \ref{lem:extensions_exist}. Assume that
  $n$ and $m$ are the length of the left and of the right leg of $R$,
  respectively. Then, with $K:=\abs{I\cap \{1,\dots,\min(m,n)\}}$,
  \begin{equation}\label{eq:final_dim}
      \dim_{(\integers_2^{\oplus\G}/V_{F_l,I})\rtimes\G}( \mathcal{H}_{R,R,\psi}) =
      2^{-3(n+m)-8+K}. 
  \end{equation}
\end{corollary}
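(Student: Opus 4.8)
The plan is to apply Corollary \ref{corol:explicit_measure_of_relevant_subspace} with $F=F_l=\{s_1^{-1},e,s_1\}$ and $\Lambda=\Lambda_I$, using the computation of the relevant combinatorial quantities supplied by the preceding propositions. First I would observe that the hypotheses of that corollary are met: Proposition \ref{lem:extensions_exist} tells us precisely that $\psi$ extends to $V^\perp_{F_l,I}$ and that $B(R,1)$ has the extension property relative to $F_l$ (this follows from Lemma \ref{ext}, since $F_l$ is supported on a segment of powers of $s_1$ and $B(R,1)$ is horizontally connected — one should note this connectivity holds for a hook's $1$-neighborhood in both $F_2$ and $\integers\wr\integers$). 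Therefore equation \eqref{eq:the_dimension} gives
\[
\dim_{\integers_2^{\oplus\G}/V_{F_l,I}\rtimes\G}(\mathcal{H}_{R,R,\psi}) = 2^{-\abs{B(R,1)}+K},
\]
where $K = \abs{\Omega_{F_l,B(R,1)}} - \abs{\Omega_{F_l,B(R,1)}/\Lambda_I}$.

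The two remaining tasks are then purely combinatorial: compute $\abs{B(R,1)}$ and compute $K$. For $\abs{B(R,1)}$, I would count the vertices of the hook $R$ together with their neighbors. The hook $R$ has $n+m+2$ vertices (the two legs of lengths $n$ and $m$, plus the corner vertices $g$ and $gs_1$). Each vertex of $R$ contributes its $S$-neighbors not already in $R$; since $\G$ is the free group or $\integers\wr\integers$ with the standard generators, the local structure of $\Cay(\G,S)$ around a hook is tree-like enough that this count is uniform. A careful bookkeeping — distinguishing interior leg vertices (which each contribute $2$ new neighbors), the two endpoint vertices of the legs (each contributing $3$), and the two corner vertices (each contributing $2$) — should yield $\abs{B(R,1)} = 3(n+m) + 8$; I would carry out this tally explicitly but expect no subtlety beyond correct case enumeration. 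For $K$, Proposition \ref{lem:extensions_exist} identifies exactly which translates $gF_l, hF_l \subset B(R,1)$ are $\Lambda_I$-equivalent: namely $g, h \in R$ and $g = ht$ for some $t \in \Lambda_I$, which by Proposition \ref{prop:identifications_in_hooks} happens precisely for the pairs $(gs_2^{-k}, gs_1s_2^{-k})$ with $k \in I$, $k \le \min(m,n)$. Thus the non-trivial equivalence classes in $\Omega_{F_l,B(R,1)}$ are exactly $\abs{I \cap \{1,\dots,\min(m,n)\}}$ classes, each of size $2$, so $K = \abs{I \cap \{1,\dots,\min(m,n)\}}$, matching the stated definition.

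Combining, $\dim = 2^{-(3(n+m)+8) + \abs{I\cap\{1,\dots,\min(m,n)\}}} = 2^{-3(n+m)-8+K}$, which is \eqref{eq:final_dim}.

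The main obstacle I anticipate is purely the vertex count $\abs{B(R,1)} = 3(n+m)+8$: one must be careful about the corner of the hook, where the vertex $gs_1$ has neighbors $g$, $gs_1^2$, $gs_1s_2$, $gs_1s_2^{-1}$, and where double-counting of shared neighbors between $g$ and $gs_1$ (or between adjacent leg vertices) could creep in. In $\integers\wr\integers$ one should double-check there is no unexpected coincidence of group elements among these neighbors — but since $n, m \ge 1$ and all the relevant elements have controlled $s_2$-exponents and base-group components, a normal-form argument rules this out, exactly as in Proposition \ref{prop:identifications_in_hooks}. Everything else is a direct substitution into the already-established Corollary \ref{corol:explicit_measure_of_relevant_subspace}.
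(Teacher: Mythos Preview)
Your proposal is correct and follows essentially the same approach as the paper: invoke Proposition~\ref{lem:extensions_exist} and Lemma~\ref{ext} to justify applying Corollary~\ref{corol:explicit_measure_of_relevant_subspace}, then compute $\abs{B(R,1)}=3(n+m)+8$ (the paper writes this as $3\abs{R}+2=3(n+m+2)+2$) and identify $K$ via Propositions~\ref{lem:extensions_exist} and~\ref{prop:identifications_in_hooks}. Your anticipated obstacle---ruling out coincidences among neighbors in $\integers\wr\integers$---is exactly what the paper dispatches with the phrase ``by normal form.''
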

\begin{proof}
  Because of Proposition \ref{lem:extensions_exist} and Lemma \ref{ext}, we can directly apply
  Corollary \ref{corol:explicit_measure_of_relevant_subspace}.

  By normal form, we know that inside $U(R,1)$ there are no relations and that
  $\abs{R}=n+m+2$, hence $\abs{U(R,1)}=3(n+m+2)+2$. Moreover, by Propositions
  \ref{lem:extensions_exist} and \ref{prop:identifications_in_hooks}, the
  correction term $K$ is exactly as given. 
\end{proof}


We conclude this section by showing that in the situation of Proposition  \ref{lem:extensions_exist}  the sets $C_{1,\infty}$,
$C_{2,\infty}$, and $C_{\infty,\infty}$, as defined in Section \ref{sec:decomposition_of_V}, are
negligible with respect to $m_{V_{F_l,I}^\perp}$.  

\begin{lemma}\label{Lemma:vperpinfinite}  
Given $g\in \G$, let $D_g$ be the measurable subset of $\mathbb{Z}_2^{\G}$ defined by 
\[
D_g=\{\chi\in \mathbb{Z}_2^{\G} \mid \chi(gs_2^{-k})=1~\mathrm{and}~\chi(gs_2^{-k}s_1^{a})=0,~~\forall k\geq 0,~ a=\pm 1\}.
\] 
Then $m_{V_{F_l,I}^\perp}(D_g)=0$.
\end{lemma}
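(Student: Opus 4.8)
The plan is to show that $D_g$ is contained, up to a null set, in sets of arbitrarily small measure, by exploiting the extension/measure machinery of Sections \ref{sec:measures} and \ref{sec:special_subgroups}. By left-invariance of $m_{V_{F_l,I}^\perp}$ (the translation $\chi\mapsto \hat\alpha_h\chi$ preserves the Haar measure and maps $D_g$ to $D_{hg}$) it suffices to treat $g=e$, so write $D:=D_e$. For each $N\ge 0$ let $E_N=B(\{e,s_2^{-1},\dots,s_2^{-N}\},1)$, a finite horizontally connected subset of $\G$, and let $\psi_N\colon E_N\to\integers_2$ be the function that is $1$ on $\{e,s_2^{-1},\dots,s_2^{-N}\}$ and $0$ on the rest of $E_N$ (in particular $0$ on all the horizontal neighbours $s_2^{-k}s_1^{\pm1}$). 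Then $D\subset C(\psi_N)$ for every $N$, so $m_{V_{F_l,I}^\perp}(D)\le m_{V_{F_l,I}^\perp}(C(\psi_N))$, and it remains to show the right-hand side tends to $0$.

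The second step is the measure estimate. Since $E_N$ is horizontally connected and $F_l=\{s_1^{-1},e,s_1\}\subset\{s_1^k\}$, Lemma \ref{ext} says $E_N$ has the extension property relative to $(F_l,\Lambda_I)$; in particular $C(\psi_N)\cap V_{F_l,I}^\perp\ne\emptyset$ (the vertical segment $\{e,s_2^{-1},\dots,s_2^{-N}\}$ has no $F_l$-translate inside it other than singletons, exactly as in the proof of Proposition \ref{lem:extensions_exist}, so $\psi_N$ trivially satisfies the constraints \eqref{ext_cond} and hence extends). Then the Lemma of Section \ref{sec:measures} together with Lemma \ref{lem:number_of_extendables} gives
\[
m_{V_{F_l,I}^\perp}(C(\psi_N)) = 2^{-\abs{E_N}+K_N},
\]
where $K_N=\abs{\Omega_{F_l,E_N}}-\abs{\Omega_{F_l,E_N}/\Lambda_I}$. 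Now $\abs{E_N}=3(N+1)+O(1)$ grows linearly in $N$, while $K_N$ is bounded by the number of $\Lambda_I$-identifications among the $F_l$-translates sitting in $E_N$; by Proposition \ref{prop:identifications_in_hooks} (applied to the vertical segment, which is a degenerate hook) these identifications only pair up the at most $N+1$ translates centred on the column $\{s_2^{-k}\}$, so $K_N\le N+1$. Hence $\abs{E_N}-K_N\ge 2N+O(1)\to\infty$, so $m_{V_{F_l,I}^\perp}(C(\psi_N))\to 0$, and therefore $m_{V_{F_l,I}^\perp}(D)=0$.

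The main obstacle is the bookkeeping in the exponent: one must check carefully that $\abs{E_N}$ really grows faster than $K_N$, i.e.\ that the $\Lambda_I$-collapsing of $F_l$-translates inside the thin column $E_N$ cannot cancel the linear growth of $\abs{E_N}$. This is where Proposition \ref{prop:identifications_in_hooks} does the essential work: it pins down that the only coincidences $xt=y$ with $t\in\Lambda_I$ occur for the specific generators $t_k$ relating a point on the left leg to the corresponding point on the right leg of a hook, and for a genuine vertical segment (no ``bend'') there is no right leg, so in fact the $F_l$-translates inside $E_N$ are pairwise inequivalent and $K_N=0$, making the estimate $m_{V_{F_l,I}^\perp}(C(\psi_N))=2^{-\abs{E_N}}\to 0$ immediate. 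Everything else is a routine application of the already-established extension and counting lemmas, plus the left-invariance reduction to $g=e$.
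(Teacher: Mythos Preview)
Your approach is essentially the paper's: truncate $D_g$ to a finite cylinder, invoke the extension lemma and the counting formula, and let $N\to\infty$. There is, however, a bookkeeping slip in your choice of $E_N$ and $\psi_N$. The $1$-ball $E_N=B(\{e,s_2^{-1},\dots,s_2^{-N}\},1)$ contains the two extra vertical points $s_2$ and $s_2^{-N-1}$, and your $\psi_N$ assigns them the value $0$. But for $\chi\in D$ the value $\chi(s_2)$ is unconstrained, and in fact $\chi(s_2^{-N-1})=1$ (the condition in $D_g$ runs over \emph{all} $k\ge 0$). Hence $D\not\subset C(\psi_N)$ as you claim.

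The repair is immediate: take instead the set $E_N'=\{s_2^{-k},\,s_2^{-k}s_1^{\pm1}\mid 0\le k\le N\}$, which is exactly the set of points appearing in the definition of $D_{g,N}$ in the paper, and let $\psi_N$ be $1$ on the central column and $0$ on the horizontal neighbours. This $E_N'$ is still horizontally connected, so Lemma~\ref{ext} applies, and now $D\subset C(\psi_N)$ genuinely holds; the rest of your argument goes through unchanged. Your final remark that $K_N=0$ (since $s_2^{k-j}\notin\Lambda_I$ for $k\ne j$, as $\Lambda_I$ lies in the base of the wreath product, respectively has trivial $s_2$-exponent sum in the free case) is correct and is in fact sharper than the bound $K_N\le N$ that the paper records.
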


\begin{proof}
Given $g\in \G$ and an integer $N\geq 0$ set 
\[
D_{g,N}=\{\chi\in \mathbb{Z}_2^{\G} \mid
\chi(gs_2^{-k})=1~\mathrm{and}~\chi(gs_2^{-k}s_1^{a})=0,~~\forall 0\leq k\le  N,~ a=\pm 1\}
\]
Let
$F=F_l=\{s_1^{-1},e,s_1\}$ and $E=\{s_2^{-k}s_1^{-2}, s_2^{-k},
s_2^{-k}s_1\mid k=0,\dots,N-1\}$.  The number
  of ways to embed shifted copies of $F$ into $E$ is $\abs{\Omega_{E,F}}=N$.
  By Lemma \ref{L - 71} and \ref{lem:number_of_extendables} we have 
\[
\mu_{V_{F_l,I}^\perp}(D_{g,N})\leq {1\over 2^{3N-\abs{\Omega_{E,F}|}}} =
\frac{1}{2^{2N}} .
\]
Since 
\[
D_g=\bigcap_{N\geq 1} D_{g,N}, 
\]
we obtain $\mu_{V_{F_l,I}^\perp}(D_g)\leq 2^{-2N}$ for all $N\geq 1$ and the lemma follows.
\end{proof}

Thus, we get the following analog of \cite[Prop. 5.8]{austin09:_count_to_quest_of_atiyah}.

\begin{corollary}\label{corol:vanish_of_infinite}
Keeping the notations above, we have
\[
m_{V_{F_l,I}^\perp}(C_{1,\infty})=m_{V_{F_l,I}^\perp}(C_{2,\infty})=m_{V_{F_l,I}^\perp}(C_{\infty,\infty})=0.
\] 
In particular,
\begin{equation*}
  \mathcal{H}_{1,\infty}=\{0\}=\mathcal{H}_{2,\infty}=\mathcal{H}_{\infty,\infty}.
\end{equation*}

\end{corollary}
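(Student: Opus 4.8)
The plan is to deduce the vanishing of the three measures from Lemma \ref{Lemma:vperpinfinite} by showing that each of $C_{1,\infty}$, $C_{2,\infty}$, $C_{\infty,\infty}$ is contained, up to a $\G$-translate, in one of the sets $D_g$ (or a countable union of such), so that the already-established estimate $m_{V_{F_l,I}^\perp}(D_g)=0$ applies. The key observation is that if $\chi\in C_{i,\infty}$ for some $i\in\{1,2,\infty\}$, then at least one of the two walkers starting at $e$ never reaches a good or bad neighborhood, hence follows an \emph{infinite} hook or vertical segment $R'$ through $e$. Along such an infinite leg, say of the form $\{e, gs_2^{-1}, gs_2^{-2},\dots\}$ (after a translation making the infinite vertical leg start appropriately), the character $\chi$ takes the value $1$ on every point of the leg and — since each such point must be an interior point of the path, never at a bend that would turn horizontally — takes the value $0$ on its two horizontal neighbors $gs_2^{-k}s_1^{\pm1}$. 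This is exactly the defining condition of $D_g$ for the appropriate $g$.

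First I would fix $\chi$ in one of the three sets and extract, from the definition in Section \ref{sec:decomposition_of_V}, the infinite leg: in case $C_{\infty,\infty}$ both walkers go on forever; in $C_{1,\infty}$ and $C_{2,\infty}$ exactly one does. Next I would identify the translate $g$ so that the infinite leg is $\{gs_2^{-k}\mid k\ge 0\}$ with $g\in e\cdot(\text{bounded set})$; since the walker's path is a genuine hook or vertical segment and $e$ lies on it at bounded combinatorial distance from $g$, only finitely many values of $g\in\G$ can occur (those $g$ with $g^{-1}$ in a ball of fixed radius around $e$). Then I would check that membership in $C_{i,\infty}$ forces, for that $g$, the conditions $\chi(gs_2^{-k})=1$ and $\chi(gs_2^{-k}s_1^{a})=0$ for all $k\ge0$, $a=\pm1$: the first because the leg consists of points where $\chi$ is $1$, the second because each point of the infinite leg is a non-endpoint, non-bend interior point of the walker's hook/vertical segment, so the only neighbors in $\chi^{-1}(1)$ are the two vertical ones. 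Hence $\chi\in D_g$. This exhibits $C_{i,\infty}\subset\bigcup_{g\in\Phi}D_g$ for a finite set $\Phi\subset\G$, and Lemma \ref{Lemma:vperpinfinite} gives $m_{V_{F_l,I}^\perp}(C_{i,\infty})\le\sum_{g\in\Phi}m_{V_{F_l,I}^\perp}(D_g)=0$.

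Finally, the statement about the Hilbert modules is immediate: by \eqref{eq:dim_of_relevant_subspace} (and its analog for $C_{1,\infty},C_{2,\infty},C_{\infty,\infty}$ noted right after Proposition \ref{prop:decomp_of_op}), the von Neumann dimension of $\mathcal{H}_{i,\infty}$ equals $m_{V_{F_l,I}^\perp}(C_{i,\infty}\cap V_{F_l,I}^\perp)=0$, and a Hilbert module of von Neumann dimension zero over a finite von Neumann algebra is the zero module. I expect the main obstacle to be the bookkeeping in the second step: making precise that "the walker follows an infinite hook or vertical segment" pins down the horizontal neighbors to have value $0$ — one has to be careful that the infinite leg cannot itself contain a bend (a hook has at most one bend, and an infinite leg past the bend still has all its non-bend points with only vertical neighbors in $\chi^{-1}(1)$), and that only finitely many translates $g$ are needed. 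Both are routine given the normal-form bounds already used in this section, but they require writing the case analysis carefully.
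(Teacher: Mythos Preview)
Your approach is essentially the paper's own: show the inclusion
\[
C_{1,\infty}\cup C_{2,\infty}\cup C_{\infty,\infty}\subset \bigcup_{g} D_g
\]
and then invoke Lemma~\ref{Lemma:vperpinfinite}; the Hilbert-module statement follows from \eqref{eq:dim_of_relevant_subspace} exactly as you say. The paper's proof is in fact the one-line version of your plan, taking the union over \emph{all} $g\in\G$ rather than a finite set.

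There is one genuine slip in your bookkeeping, and it is precisely the point you flagged as the expected obstacle. You assert that only finitely many $g$ are needed because ``$e$ lies on the walker's path at bounded combinatorial distance from $g$''. This is false: if $e$ sits on the left leg of the hook at height $j$ below the bend, then the walker first climbs $j$ steps to the bend before descending the infinite right leg, and the top $g$ of that leg is $s_2^{j}s_1$ (or $s_2^{j}s_1^{-1}$), with $j$ unbounded. So the set $\Phi$ of relevant $g$'s is genuinely infinite. The repair is immediate and is what the paper does: allow $g$ to range over all of $\G$ (a countable union of null sets is still null), and the rest of your argument goes through unchanged.
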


\begin{proof}
Indeed, $C_{1,\infty}\cup C_{2,\infty}\cup C_{\infty,\infty}\subset \bigcup_{g\in \G} D_g$,
so we may apply Lemma \ref{Lemma:vperpinfinite}. The second claim follows
directly from the corresponding version of \eqref{eq:dim_of_relevant_subspace}.
\end{proof}

\begin{remark}
Lemma \ref{Lemma:vperpinfinite} and its corollary above can be extended to
almost arbitrary subgroups $V_{F,\Lambda}$, where  $F\subset \G$ is a finite
subset, $\Lambda\leq \G$ is a subgroup as in Section \ref{sec:measures}, and 
$E\subset \G$ is any set having the extension
property with respect to $(F,\Lambda)$. For instance it is enough to have that
$|F|\geq 2$ and that the subsets $gF$ ($g\in \G$) which are included in $E$
are pairwise disjoint (as the proof of Lemma \ref{Lemma:vperpinfinite} 
shows).
\end{remark}

\section{Explicit calculation of the von Neumann dimension of the eigenspace}

We continue with the situation of Section \ref{sec:special_subgroups}.
We deal only with the eigenvalue $-2$ for $A$ and we set $Q= A+2$.
\begin{theorem}\label{theo:main}
  Fix $I:=\{2,n_1,n_2,\dots\}\subset\naturals$ with $n_0:=2<n_1<n_2<\dots$ and such
  that $n_k\equiv 2\pmod 3\;\forall k$. Choose
  $\G=\integers\wr\integers$ or $\Gamma$ free on two generators and set
  $G_I=(\integers_2^{\oplus\G}/V_{F_l,I})\semiprod\G$ as in Definition
  \ref{def:elements}. Construct $A\in \rationals[G_I]$ as above. Then
$$ \dim_{G_I}  \ker (A+2) 
=  \beta_1 + \beta_2   \sum_{k=1}^\infty   2^{-6 n_k +k}. $$
Here,
$\beta_1$ and $\beta_2$ are explicitly given rational numbers, compare
\eqref{eq:formula}.

In particular, these numbers show up as $L^2$-Betti numbers of normal
coverings of compact manifolds with covering group $G_I$.
\end{theorem}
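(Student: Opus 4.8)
The plan is to collect the structural facts assembled in the previous sections and assemble the dimension formula by summing the contributions of the $A$-invariant pieces in the decomposition \eqref{decomposition}. First I would note that by Corollary \ref{corol:vanish_of_infinite} the summands $\mathcal{H}_{1,\infty}$, $\mathcal{H}_{2,\infty}$, $\mathcal{H}_{\infty,\infty}$ are trivial, and by the last sentence of Proposition \ref{prop:decomp_of_op} the operator $A$ vanishes on $\mathcal{H}_0$; hence $\ker(A+2)$ intersected with $\mathcal{H}_0$ is trivial (as $-2\ne 0$). So the kernel of $A+2$ is the direct sum, over the equivalence classes $\mathcal{C}\in\Omega_{i,j}/\sim$ with $1\le i\le j\le 2$, of $\ker(A+2)|_{\mathcal{H}_{\mathcal{C}}}$. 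By Proposition \ref{prop:decomp_of_op}, $A|_{\mathcal{H}_{\mathcal{C}}}\simeq \id_{\mathcal{H}_{(P,R,\psi)}}\otimes A^{l,i,j}$ where $l$ is the length of $P$; therefore the dimension of the kernel of $A+2$ on $\mathcal{H}_{\mathcal{C}}$ equals $\dim_{\integers_2^{\oplus\G}/V\rtimes\G}(\mathcal{H}_{(P,R,\psi)})$ times the (finite) dimension of $\ker(A^{l,i,j}+2)$ as a matrix on $l^2(V_v^{l,i,j})$.

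Next I would invoke Lemma \ref{4*}: $-2$ is an eigenvalue of $A^{l,i,j}$ only when $(i,j)=(1,1)$ and $l\equiv 1\pmod 3$, and then the eigenspace is one-dimensional. So only the classes $\mathcal{C}$ of ending scenario $(1,1)$ with $P$ of length $l\equiv 1\pmod 3$ contribute, each with multiplicity $1$. For such a class, a hook $R$ with ending scenario $(1,1)$ is a vertical segment or genuine hook both of whose walkers stopped at lower endpoints of hooks; in the $(1,1)$ case $R=P$ and has two legs, say of lengths $n$ and $m$ (here $l$ is related to $n+m$: the path through $e$ from lower endpoint to lower endpoint). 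By Corollary \ref{coro:final_dim}, $\dim(\mathcal{H}_{R,R,\psi})=2^{-3(n+m)-8+K}$ with $K=\abs{I\cap\{1,\dots,\min(m,n)\}}$. Finally, by the remark at the end of Section \ref{sec:decomposition_of_V}, the number of triples in the equivalence class $\mathcal{C}$ containing a given $(P,R,\psi)$ with $e\in P$ equals the number of $g\in P$ with $g^{-1}\in P$; summing $\dim(\mathcal{H}_{(P,R,\psi)})$ over the class just replaces the single representative's dimension by $\abs{\mathcal{C}}$ times it, but actually it is cleaner to sum \eqref{eq:dim_of_relevant_subspace} over all $(P,R,\psi)$ directly and organize the sum by the isomorphism type of the unrooted hook $R$. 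I would therefore reindex: sum over pairs $(n,m)$ of leg lengths with $n+m\equiv$ appropriate residue mod $3$ (coming from $l\equiv 1$), count how many lower-endpoint-rooted representatives each unrooted hook has, and combine with the factor $2^{-3(n+m)-8+K}$.

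The main computational obstacle — and the place where the specific hypothesis $n_k\equiv 2\pmod 3$ and the choice $I=\{2,n_1,\dots\}$ with $n_0=2$ get used — is the bookkeeping that turns the double sum over $(n,m)$ into the stated single sum $\beta_1+\beta_2\sum_k 2^{-6n_k+k}$. Concretely: $K=\abs{I\cap\{1,\dots,\min(m,n)\}}$ jumps by one each time $\min(m,n)$ crosses an element of $I$; since $2\in I$ the term $2$ is always counted once $\min(m,n)\ge 2$, which (together with the parity/mod-$3$ constraints forcing $m,n\ge 2$ on the contributing hooks) produces the universal factor absorbed into $\beta_1$; each further element $n_k\in I$ contributes, for each $(n,m)$ with $\min(m,n)\ge n_k$, an extra factor $2$, i.e. doubles the weight. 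Carrying out the geometric summation over $\{(n,m): \min(m,n)\ge n_k,\ \text{congruence conditions}\}$ of $2^{-3(n+m)}$ yields, up to an explicit rational constant and the congruence-counting, a term proportional to $2^{-6n_k}$, and the extra $2^{+k}$ arises because the $k$-th new generator's doubling compounds with those of $n_1,\dots,n_{k-1}$ on the overlapping ranges — i.e. a telescoping/reorganization of $\prod$ into $\sum$. So the skeleton of the write-up is: (1) reduce to $(1,1)$, $l\equiv 1\pmod 3$ classes via Lemma \ref{4*} and Proposition \ref{prop:decomp_of_op}; (2) plug in Corollary \ref{coro:final_dim}; (3) evaluate the resulting series, extracting $\beta_1$ (the $I$-independent part, from the base generator $2$ and the empty-$I$ geometric series) and $\beta_2$ (the common prefactor of the $n_k$-dependent geometric tails), verifying $d=6$; (4) conclude via \cite[Lemma 10.5]{MR1926649} and \cite[Proposition 6 and Theorem 7]{MR1797748} that the number is an $L^2$-Betti number of a normal covering with group $G_I$. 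I expect step (3) — the precise combinatorial accounting of $K$ against the mod-$3$ constraint on contributing leg lengths, and the emergence of the $2^{k}$ factor — to be the delicate part; steps (1), (2), (4) are essentially assembly of already-proved statements.
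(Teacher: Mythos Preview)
Your outline is correct and follows the paper's proof: reduce via Proposition~\ref{prop:decomp_of_op}, Corollary~\ref{corol:vanish_of_infinite} and Lemma~\ref{4*} to classes in $\Omega_{1,1}/\sim$ with hook length $l\equiv 1\pmod 3$, insert Corollary~\ref{coro:final_dim}, and evaluate the double sum over leg lengths $(l_1,l_2)$ with $l_1+l_2\equiv 0\pmod 3$ by stratifying on the value of $K$ (the paper does this via $V_k=U_k\setminus U_{k+1}$ with $U_k=\{(l_1,l_2):l_1,l_2\ge n_k\}$ and then telescopes to obtain~\eqref{eq:formula}).

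The one passage to tighten is your reindexing discussion. The classes $\mathcal{C}\in\Omega_{1,1}/\sim$ are already in bijection with ordered pairs $(l_1,l_2)$ of leg lengths (in particular $R$ must be a genuine hook, not a vertical segment, since a vertical segment has only one lower endpoint), and each such class contributes $\dim(\mathcal{H}_{(R,R,\psi)})$ exactly \emph{once}: the factor $|\mathcal{C}|$ is accounted for by the dimension of $l^2(V_v^{l,1,1})$ in the tensor decomposition of Proposition~\ref{prop:decomp_of_op}, and does \emph{not} multiply the one-dimensional eigenspace of $A^{l,1,1}+2$. So no ``counting of lower-endpoint-rooted representatives'' should enter the formula---if you inserted such a count you would overcount by the length of the hook.
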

\begin{proof}
We use Proposition \ref{prop:decomp_of_op} to decompose $A$. By Corollary
\ref{corol:vanish_of_infinite} and Lemma \ref{4*} the only contributions to the
eigenvalue $-2$ are obtained on $\mathcal{H}_{\mathcal{C}}$ if $\mathcal{C}\in
\Omega_{1,1}/\sim$ and if the length of the associated hook $R$ is congruent $1$
modulo $3$, and the $L^2$-dimension of the eigenspace is then
\begin{equation}\label{eq:repeat_contrib}
\dim_{G_I}(\mathcal{H}_{R,R,\psi})=2^{-3(l_1+l_2)-8+\abs{I\cap
    \{1\dots,\min\{l_1,l_2\}\}}},
\end{equation}
where $R$ is the hook, $\psi$ is the characteristic function of the hook in its
$1$-neighborhood and $l_1,l_2$ are the lengths of the left and right leg of the hook,
respectively. Note that the length of the hook is $l_1+l_2+1$, so we get a
contribution exactly if $l_1+l_2$ is divisible by $3$.

Write $I=\{2,n_1,n_2,\dots\}$ with $2<n_1<n_2<\dots$. We have to add the
summand \eqref{eq:repeat_contrib} for each $1\le l_1,l_2$ with $l_1+l_2$
divisible by $3$ (each such corresponding to one class of hook passing through
$e$). To facilitate the effect of $\abs{I\cap
    \{1,\dots,\min\{l_1,l_2\}\}}$, 
we choose the disjoint decomposition of the
  $(l_1,l_2)$-plane into subsets $V_k:=U_k\setminus U_{k+1}$, where
  $U_k=\{(l_1,l_2)\mid l_1,l_2\ge n_k\}$, such that $\abs{I\cap
    \{1,\dots,\min\{l_1,l_2\}\}}=k$ on $V_k$.

We obtain (with convention $n_0=2$)
\begin{equation}\label{eq:U_k_sum}
  \begin{split}
    &\dim_{G_I} \ker (A+2)  = 2^{-8}\sum_{k=0}^\infty 2^k
    \sum_{\small \begin{array}{c} (l_1,l_2)\in V_k\\
        l_1 + l_2 \equiv 0 (3) \end{array}} 2^{-3(l_1+l_2)}\\
&  =
    2^{-8}\sum_{k=0}^\infty 2^k \left(  \sum_{ \begin{array}{c} (l_1,l_2)\in U_k\\
          l_1 + l_2   \equiv 0 (3)  \end{array}} 2^{-3(l_1+l_2)} -   \sum_{\small \begin{array}{c} (l_1,l_2)\in U_{k+1}\\
          l_1 + l_2 \equiv 0 (3) \end{array}} 2^{-3(l_1+l_2)}\right).
  \end{split}
\end{equation}

Recall that all $n_k$ are congruent $2$ modulo $3$. We distinguish the cases
$l_1=3r_1+r$ with $r=0,1,2$ (and $l_2=3r_2+2-r$ to get $l_1+l_2\equiv
0\pmod 3$) and obtain finally for the sum over $U_k$
\begin{equation*}
  \begin{split}
    \sum_{\small \begin{array}{c} (l_1,l_2)\in U_k\\
        l_1 + l_2 \equiv 0 (3) \end{array}} 2^{-3(l_1+l_2)} & = \sum_{r=0}^2
    \sum_{r_1=0}^\infty 2^{-3(n_k+3r_1+r)}\sum_{r_2=0}^\infty
    2^{-3(n_k+3r_2+2-r)}\\
& = 3\cdot 2^{-6n_k+2}(1-2^{-9})^{-2}.
  \end{split}
\end{equation*}

Substituting this in \eqref{eq:U_k_sum} we get
\begin{equation}\label{eq:formula}
  \begin{split}
    \dim_{G_I} \ker (A+2) &= \frac{3}{2^{6}(1-2^{-9})^2}\sum_{k=0}^\infty
    \left(2^k 2^{-6n_k}-\frac{1}{2}2^{k+1} 2^{-6n_{k+1}}\right) \\
& =
    \frac{3}{2^6(1-2^{-9})^2}2^{-12} + \frac{3}{2^7(1-2^{-9})^2}
    \sum_{k=1}^\infty 2^{-6n_k+k}.
  \end{split}
\end{equation}

 \end{proof}

\section{Arbitrary real numbers as  $L^2$-Betti numbers for normal coverings}

Our main point about the explicit formulas for $L^2$-Betti numbers is
two-fold: on the one hand, we want to show that every positive real number is
an $L^2$-Betti number. This is the goal of the current section.

Secondly, we want to show that we get transcendental $L^2$-Betti numbers
for \emph{universal} coverings, which translates algebraically that we have to
use finitely presented groups. This will be done in the last sections.

Now we show how, starting from the $L^2$-Betti numbers we
explicitly obtain in Theorem \ref{theo:main}, one can construct (again
explicitly) more groups and 
elements in their group rings to finally get the following theorems.

\begin{theorem}
  For every real number $r\ge 0$ their is a finitely generated group
  $\Gamma_r$, an $l\in\naturals$ and $a_r\in M_l(\integers\Gamma_r)$ such that
  \begin{equation*}
    \dim_{\Gamma_r}(\ker(a_r)) =r 
  \end{equation*}
and from a dyadic expansion $r=\sum \lambda_j 2^j$ with $\lambda_j\in
  \{0,1\}$ we obtain (in principle) an ``explicit'' description of $\Gamma_r$
  and $a_r$.

   Moreover, there is a compact manifold $M$ with a normal covering $\tilde
   M$ (with covering group $\Gamma_r$) such that
   \begin{equation*}
     b_3^{(2)}(\tilde M;\Gamma_r) =r.
   \end{equation*}
\end{theorem}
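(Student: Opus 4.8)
The plan is to build $\Gamma_r$ and $a_r$ by assembling, for each bit $\lambda_j=1$ in the dyadic expansion of $r$, a copy of the group/operator pair produced in Theorem~\ref{theo:main} whose kernel dimension is a known dyadic rational, and then taking a suitable (restricted) direct sum so that the $L^2$-dimensions add. First I would recall from Theorem~\ref{theo:main} and formula~\eqref{eq:repeat_contrib} that a single class of hook of leg-lengths $(l_1,l_2)$ with $l_1+l_2\equiv 0\pmod 3$ contributes exactly $2^{-3(l_1+l_2)-8+K}$ to $\dim_{G_I}(\ker(A+2))$, and that by choosing $I$ appropriately we control $K$; in particular, by stripping the construction down to a \emph{single} hook class (or a single well-chosen value of $l_1+l_2$ and $I=\emptyset$), we can realize $2^{-N}$ for a cofinal set of $N$, hence by scaling (passing to $M_l$, i.e.\ taking block sums) and translating indices, any dyadic rational $2^{j}$ for $j\in\integers$, and more importantly we can realize each power $2^{j}$, $j\le 0$, on the nose. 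To get $r=\sum_{j\le j_0}\lambda_j 2^j$ (after dividing by a power of two we may assume $r\le 1$, absorbing the integer part into free summands $\ell^2(\integers)$ which contribute integer dimensions), I would take $\Gamma_r$ to be the restricted direct product $\bigoplus_{j:\lambda_j=1} G_j$ where $G_j$ realizes $2^{j}$, and let $a_r$ be the block-diagonal sum of the corresponding operators, embedded into the group ring of the product via the standard inclusions $\integers[G_j]\hookrightarrow\integers[\Gamma_r]$.

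The key point making the dimensions add is the following standard induction/restriction fact for von Neumann dimension: if $H\le G$ and $b\in M_l(\integers H)$, then $\dim_G(\ker(b\otimes_{\integers H}\integers G))=\dim_H(\ker b)$; applied with $G=\Gamma_r$ and $H=G_j$ this shows each summand keeps its dimension, and since the summands act on orthogonal subspaces of $\ell^2(\Gamma_r)^{\oplus(\sum l_j)}$ (after padding to a common matrix size, say $l=\max_j l_j$, filling with identity blocks whose kernels are zero) the kernel of $a_r$ is the orthogonal direct sum of the individual kernels, so $\dim_{\Gamma_r}(\ker a_r)=\sum_{j:\lambda_j=1}2^{j}=r$. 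The "explicitness" claim is then automatic: given the bit sequence $(\lambda_j)$, one writes down each $G_j$ and each operator by the explicit recipe of Definition~\ref{def:elements} and Theorem~\ref{theo:main}, and the direct sum is a mechanical operation.

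Finally, for the manifold statement I would invoke the standard construction referenced in the introduction (\cite[Lemma 10.5]{MR1926649}, \cite[Proposition 6 and Theorem 7]{MR1797748}): given a finitely generated group $\Gamma_r$ and $a_r\in M_l(\integers\Gamma_r)$, one builds a compact manifold $M$ with $\pi_1(M)$ surjecting onto $\Gamma_r$ and a normal covering $\tilde M$ with deck group $\Gamma_r$ whose third $L^2$-Betti number equals $\dim_{\Gamma_r}(\ker a_r)=r$; since $\Gamma_r$ need not be finitely presented, this is a normal (not universal) covering, which is exactly what the theorem claims. The main obstacle I anticipate is not conceptual but bookkeeping: one must verify that the single-hook stripped-down version of Theorem~\ref{theo:main} genuinely realizes a cofinal family of negative powers of two (e.g.\ that for every large $N$ there is an admissible $(l_1,l_2,I)$ giving $2^{-N}$), and that the restricted direct product does not introduce extra kernel — both follow from the explicit formulas already established, but care is needed to make the matrix sizes and index shifts match so that the sum telescopes correctly to an \emph{arbitrary} prescribed $r\ge 0$ rather than only to dyadic rationals; handling general real $r$ requires the sum over infinitely many $j$, whence $\Gamma_r$ is an infinite restricted product and is in general neither finitely presented nor finitely generated unless one is more clever, but the theorem only asserts finite generation, which one secures by instead using a single $\G=\integers\wr\integers$-type group with an infinite recursively-chosen hook pattern encoding all the bits at once, exactly as in Theorem~\ref{theo:main} but with the $n_k$ chosen from the support of $(\lambda_j)$.
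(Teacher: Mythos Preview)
Your proposal has a genuine gap. The core difficulty is exactly the one you flag at the end: an infinite restricted direct product $\bigoplus_{j:\lambda_j=1} G_j$ is \emph{not} finitely generated, and an infinite block sum is not an element of $M_l(\integers\Gamma_r)$ for any finite $l$. So your main construction does not produce the objects the theorem asks for when $r$ is irrational. Your fallback --- ``use a single $\integers\wr\integers$-type group with the $n_k$ chosen from the support of $(\lambda_j)$'' --- does not work either: Theorem~\ref{theo:main} yields only numbers of the specific shape $\beta_1+\beta_2\sum_{k\ge 1}2^{-dn_k+k}$, which is a very thin set of reals (note the shift $+k$ in the exponent and the fixed rational offset $\beta_1$), not arbitrary $r$. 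Likewise, your ``single-hook stripped-down version'' is not available: the operator $A+2$ of Theorem~\ref{theo:main} has kernel receiving contributions from \emph{all} admissible hook classes simultaneously, so you cannot simply isolate one term of~\eqref{eq:repeat_contrib} without building a genuinely different operator.

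The paper's route is quite different and hinges on a combinatorial lemma you are missing. It first records the easy closure properties (Lemmas~\ref{sum} and~\ref{product}): the set $U$ of achievable kernel dimensions is closed under addition and under multiplication by non-negative rationals, using \emph{finite} products of groups. The crux is then Proposition~\ref{prop:set_closure}: any $r\in\reals_{\ge 0}$ can be written, after a rational rescaling, as a sum of a \emph{fixed finite number} (namely $2^{D-1}$) of numbers of the special form $\sum_{k\ge 0}2^k2^{-Dn_k}$ achievable via Theorem~\ref{theo:main}. The point is a digit-redistribution trick: the dyadic digits of $r$ are parcelled out, with controlled shifts, among $2^{D-1}$ auxiliary numbers so that each auxiliary number has the required ``each successive digit shifted one further left'' pattern. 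Because only finitely many summands are needed, the resulting group is a finite product of finitely generated groups, hence finitely generated, and $a_r$ is a finite block sum. This finite-decomposition step is the missing idea in your proposal.
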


To prove this from the previous constructions, we review a
couple of constructions for which we can control the $L^2$-Betti numbers in
terms of $L^2$-Betti numbers of the ingredients.

\begin{lemma} \label{sum}
  Let $\Gamma_1,\Gamma_2$ be two groups, $l_1,l_2\in\naturals$ and $a_j\in
  M_{l_j}(\integers[\Gamma_j])$ for $j=1,2$. Form the ``block sum''
  \begin{equation*}
    a:=a_1\oplus a_2\in M_{l_1+l_2}(\integers[\Gamma_1\times \Gamma_2]),
  \end{equation*}
  where we tacitly identify $\Gamma_j$ with its image in
  $\Gamma:=\Gamma_1\times \Gamma_2$ and identify $a_j$ with its image under
  the induced map. Then
  \begin{equation*}
    \dim_{\Gamma}(\ker(a)) = \dim_{\Gamma_1}(\ker(a_1)) +
      \dim_{\Gamma_2}(\ker(a_2)). 
  \end{equation*}
\end{lemma}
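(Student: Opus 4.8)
The plan is to reduce the statement to the multiplicativity of von Neumann dimension under induction/restriction along the two factor inclusions $\Gamma_1,\Gamma_2\hookrightarrow\Gamma=\Gamma_1\times\Gamma_2$. First I would identify $\ell^2(\Gamma)$ with the Hilbert tensor product $\ell^2(\Gamma_1)\bar\otimes\ell^2(\Gamma_2)$ as a bimodule over $L(\Gamma_1)\bar\otimes L(\Gamma_2)\cong L(\Gamma)$. Under this identification, the image of $a_1\in M_{l_1}(\integers[\Gamma_1])$ in $M_{l_1}(\integers[\Gamma])$ acts as $a_1\otimes\id_{\ell^2(\Gamma_2)}$ on $\ell^2(\Gamma_1)^{l_1}\bar\otimes\ell^2(\Gamma_2)$, and similarly for $a_2$. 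The key step is then the tensor-product formula for the dimension: if $H$ is a Hilbert $L(\Gamma_1)$-module then $H\bar\otimes\ell^2(\Gamma_2)$ is a Hilbert $L(\Gamma)$-module with $\dim_\Gamma(H\bar\otimes\ell^2(\Gamma_2))=\dim_{\Gamma_1}(H)$; this is standard (it is the statement that induction from $\Gamma_1$ to $\Gamma$ preserves von Neumann dimension, since $[\Gamma:\Gamma_1]$ is ``replaced'' by the trace-preserving inclusion $L(\Gamma_1)\subset L(\Gamma)$ coming from the conditional expectation).

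Next I would compute the kernel of $a=a_1\oplus a_2$ as an operator on $\ell^2(\Gamma)^{l_1}\oplus\ell^2(\Gamma)^{l_2}$. Since $a$ is block diagonal, $\ker(a)=\ker(a_1\otimes\id_{\ell^2(\Gamma_2)})\oplus\ker(a_2\otimes\id_{\ell^2(\Gamma_1)})$, where in the first summand $a_1$ acts on the $\Gamma_1$-factor and the $\Gamma_2$-factor is along for the ride, and symmetrically in the second. For a bounded operator $b$ on a Hilbert $L(\Gamma_1)$-module $H$ one has $\ker(b\otimes\id_{\ell^2(\Gamma_2)})=\ker(b)\bar\otimes\ell^2(\Gamma_2)$ (this follows because $\id_{\ell^2(\Gamma_2)}$ is injective with dense range, so the kernel of the tensor operator is exactly the closure of $\ker(b)\otimes\ell^2(\Gamma_2)$, which is already closed). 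Applying the dimension formula from the previous paragraph gives $\dim_\Gamma(\ker(a_1\otimes\id))=\dim_{\Gamma_1}(\ker(a_1))$ and likewise for the other summand. Additivity of $\dim_\Gamma$ over orthogonal direct sums then yields the claimed identity.

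The only genuinely delicate point is the behavior of kernels under the operation $b\mapsto b\otimes\id$; everything else is additivity and the induction-invariance of dimension, both of which are in the standard references (e.g.~\cite[Chapter 1]{MR1926649}), and can simply be cited. I would therefore organize the write-up around (i) the bimodule identification $\ell^2(\Gamma)\cong\ell^2(\Gamma_1)\bar\otimes\ell^2(\Gamma_2)$, (ii) the remark that $\ker(a)$ splits as a direct sum of two ``induced'' kernels, and (iii) an appeal to the multiplicativity/additivity of von Neumann dimension. Alternatively, one can avoid kernels-of-tensor-products altogether by noting that $a_1$ and $a_2$ have commuting actions on $\ell^2(\Gamma)$ and that the projection onto $\ker(a)$ is $p_1\otimes 1\oplus 1\otimes p_2$ where $p_j$ is the kernel projection of $a_j$ over $\Gamma_j$, then computing the trace of this projection in $L(\Gamma)$ directly using that the canonical trace on $L(\Gamma_1\times\Gamma_2)$ restricts to the product of the traces on $L(\Gamma_1)$ and $L(\Gamma_2)$. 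This second route is arguably cleaner and I would likely present it instead, as it turns the whole lemma into a one-line trace computation once the identification $\tau_\Gamma = \tau_{\Gamma_1}\otimes\tau_{\Gamma_2}$ is in place.
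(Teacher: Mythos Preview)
Your proposal is correct and follows essentially the same route as the paper: the paper's proof reduces immediately to (i) the induction principle $\dim_\Gamma(\ker(a_j))=\dim_{\Gamma_j}(\ker(a_j))$, citing \cite[Proposition 3.1]{MR1828605}, and (ii) the observation that the kernel of a block sum is the direct sum of the two kernels together with additivity of von Neumann dimension. Your tensor-product unpacking of step (i) is simply an explicit proof of the cited induction result, and your alternative projection/trace computation is close in spirit to what the paper does in the proof of the next lemma (Lemma~\ref{product}).
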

\begin{proof}
  This is well known and essentially clear. First of all, by the induction
  principle (e.g.~\cite[Proposition 3.1]{MR1828605}),
  $\dim_{\Gamma}(\ker(a_j))=\dim_{\Gamma_j}(\ker(a_j))$ for $j=1,2$, where we
  think of $a_j$ either as living over $\integers[\Gamma]$ or over
  $\integers[\Gamma_j]$. 

  Secondly, the kernel of $a$ (as block sum) is the direct sum of the kernels
  of $a_1$ and of $a_2$ (in $l^2(\Gamma)^{l_1+l_2}$). As the von Neumann
  dimension is additive for direct sums, the assertion follows.
\end{proof}

\begin{lemma} \label{product}
  Let $\Gamma_1,\Gamma_2$ be two groups, $l_1,l_2\in\naturals$ and $a_j\in
  M_{l_j}(\integers[\Gamma_j])$ for $j=1,2$. Assume that $a_1$ and $a_2$ are
  non-negative (if necessary, replace them by $a_j^*a_j$). Form the ``tensor
  sum''
  \begin{equation*}
    a:=a_1\tensor \id + \id\tensor a_2 \in M_{l_1\cdot
      l_2}(\integers[\Gamma_1]\tensor \integers[\Gamma_2]),
  \end{equation*}
  thinking of $\integers[\Gamma]=\integers[\Gamma_1]\tensor
  \integers[\Gamma_2]$ acting on
  $l^2(\Gamma_1\times\Gamma_2)=l^2(\Gamma_1)\overline{\tensor} l^2(\Gamma_2)$.  Then
  \begin{equation*}
    \dim_{\Gamma}(\ker(a)) = \dim_{\Gamma_1}(\ker(a_1))\cdot
    \dim_{\Gamma_2}(\ker(a_2)). 
  \end{equation*}
\end{lemma}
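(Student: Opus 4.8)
The plan is to reduce the computation of $\dim_\Gamma(\ker(a))$ for the tensor sum $a = a_1\otimes\id + \id\otimes a_2$ to a statement about the (joint) spectral measures of the non-negative self-adjoint operators $a_1$ and $a_2$. First I would recall that for a non-negative element $b\in M_l(\integers[\Gamma_j])$ acting on $l^2(\Gamma_j)^{l_j}$, the von Neumann trace gives a spectral measure $\mu_j$ on $[0,\infty)$ via $\mu_j(E) = \tr_{\Gamma_j}(\chi_E(a_j))$, normalized so that $\mu_j([0,\infty)) = l_j$, and that $\dim_{\Gamma_j}(\ker(a_j)) = \mu_j(\{0\})$. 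The key structural fact is that under the identification $l^2(\Gamma) = l^2(\Gamma_1)\bar\otimes l^2(\Gamma_2)$ (and $M_{l_1 l_2} = M_{l_1}\otimes M_{l_2}$), the operators $a_1\otimes\id$ and $\id\otimes a_2$ commute and are affiliated to the two commuting copies of the respective group von Neumann algebras; hence there is a joint spectral measure $\mu$ on $[0,\infty)^2$ which, by the tensor-product structure of the trace $\tr_\Gamma = \tr_{\Gamma_1}\otimes\tr_{\Gamma_2}$, is simply the product measure $\mu_1\otimes\mu_2$.

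The next step is to identify $\ker(a)$ spectrally. Since $a = a_1\otimes\id + \id\otimes a_2$ is the image of the coordinate-sum function $(x_1,x_2)\mapsto x_1+x_2$ under the joint functional calculus, we have
\begin{equation*}
  \dim_\Gamma(\ker(a)) = \tr_\Gamma(\chi_{\{0\}}(a)) = \mu\bigl(\{(x_1,x_2)\in[0,\infty)^2 \mid x_1 + x_2 = 0\}\bigr).
\end{equation*}
Because both $a_1$ and $a_2$ are non-negative, $x_1 + x_2 = 0$ on $[0,\infty)^2$ forces $x_1 = x_2 = 0$, so the relevant set is exactly $\{0\}\times\{0\}$. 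Using $\mu = \mu_1\otimes\mu_2$ we get
\begin{equation*}
  \dim_\Gamma(\ker(a)) = (\mu_1\otimes\mu_2)(\{0\}\times\{0\}) = \mu_1(\{0\})\cdot\mu_2(\{0\}) = \dim_{\Gamma_1}(\ker(a_1))\cdot\dim_{\Gamma_2}(\ker(a_2)),
\end{equation*}
which is the claim. (An equivalent, more hands-on route avoiding spectral-measure language: write $l^2(\Gamma_j)^{l_j} = \ker(a_j)\oplus \ker(a_j)^\perp$ as Hilbert $\Gamma_j$-module decompositions, tensor the two, and check that $\ker(a) = \ker(a_1)\otimes\ker(a_2)$ using non-negativity to see that $a$ is bounded below by a positive constant on the three other summands $\ker(a_1)\otimes\ker(a_2)^\perp$, $\ker(a_1)^\perp\otimes\ker(a_2)$, $\ker(a_1)^\perp\otimes\ker(a_2)^\perp$; then invoke multiplicativity of von Neumann dimension under $\bar\otimes$, $\dim_{\Gamma_1\times\Gamma_2}(H_1\bar\otimes H_2) = \dim_{\Gamma_1}(H_1)\cdot\dim_{\Gamma_2}(H_2)$.)

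The main obstacle, such as it is, is purely bookkeeping rather than conceptual: one must be careful that $\ker(a_j)^\perp$ need not be a module on which $a_j$ is invertible (only invertible after restricting, i.e.\ $0$ may be in the spectrum without being an eigenvalue), so the naive argument "$a$ is bounded below on the other three summands" must be phrased spectrally — that is, one really should split off a small spectral interval $[0,\varepsilon)$ and let $\varepsilon\to 0$, or just work with the spectral measures from the outset as above. The multiplicativity $\dim_{\Gamma_1\times\Gamma_2}(H_1\bar\otimes H_2) = \dim_{\Gamma_1}(H_1)\dim_{\Gamma_2}(H_2)$ and the tensor-factorization of the trace are standard (see e.g.\ \cite[Chapter 1]{MR1926649}), and the non-negativity hypothesis — which is harmless since we may always replace $a_j$ by $a_j^*a_j$ without changing the kernel — is exactly what is needed to locate the zero set of $x_1+x_2$ at the single atom $(0,0)$.
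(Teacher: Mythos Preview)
Your proof is correct. The paper takes precisely the ``hands-on route'' you sketch parenthetically: it sets $p_j$ to be the orthogonal projection onto $\ker(a_j)$, verifies that $p_1\otimes p_2$ is the projection onto $\ker(a)$ by decomposing $1-p_1\otimes p_2$ into the three tensor pieces and checking that $a$ has trivial kernel on each, and then computes $\tr_\Gamma(p_1\otimes p_2)=\tr_{\Gamma_1}(p_1)\tr_{\Gamma_2}(p_2)$ directly from the standard basis of $l^2(\Gamma)^{l_1l_2}$.

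Your primary route via joint spectral measures is a genuinely different packaging of the same content: instead of decomposing the Hilbert space by hand, you push everything into the product measure $\mu_1\otimes\mu_2$ and read off the kernel as the atom at $(0,0)$. This is arguably cleaner and makes the role of non-negativity transparent (it is exactly what forces $\{x_1+x_2=0\}=\{(0,0)\}$). One remark on your stated obstacle: the worry that $a$ need not be \emph{bounded below} on the three complementary summands is not actually an issue for the hands-on argument either. One only needs that $a$ has \emph{trivial kernel} there, and this follows directly from non-negativity: if $av=0$ then $\langle(a_1\otimes\id)v,v\rangle+\langle(\id\otimes a_2)v,v\rangle=0$, so both quadratic forms vanish, hence (via square roots) $(a_1\otimes\id)v=(\id\otimes a_2)v=0$, forcing $v\in\ker(a_1)\otimes\ker(a_2)$. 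So no $\varepsilon$-truncation is needed; the paper's argument and your parenthetical sketch go through as stated.
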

\begin{proof}
  This lemma is also well known and follows from the fact that in this
  situation $\ker(a)=\ker(a_1)\tensor \ker(a_2)$. A detailed argument for a
  special case can be found in the proof of \cite[Theorem 4.1]{MR1934693}.

  For the sake of completeness, let us give a more explicit proof here. If
  $p_j$ is the orthogonal projection onto $\ker(a_j)$ for $j=1,2$ (considered
  as matrices over $\mathcal{N}\Gamma$, induced up from
  $\mathcal{N}\Gamma_j$), we claim 
  that in this situation $p:=p_1\tensor p_2$ is the projection onto the kernel of
  $a$. As $(a_1\tensor \id+\id\tensor a_2)(p_1\tensor p_2)=0$, the image of
  $p$ is contained in the kernel of $a$.

  Now, $(1-p_1)\tensor p_2 + p_1\tensor (1-p_2) + (1-p_1)\tensor (1-p_2)$ is an orthogonal
  decomposition of $1-p_1$. On the image of $(1-p_1)\tensor p_2$, which is equal
  to $\im(1-p_1)\tensor \im(p_2)$, $a$ coincides with $a_1\tensor \id$ which
  is $>0$ there, and the corresponding argument applies to the image of
  $p_1\tensor (1-p_2)$.

  On the image of $(1-p_1)\tensor (1-p_2)$ which coincides with
  $\im(1-p_1)\tensor \im(1-p_2)$, $a$ coincides with $a_1\tensor
  \id+\id\tensor a_2$, and both summands are $>0$. Altogether, on the
  complement of $\im(p)$ $a>0$ and therefore $\ker(a)=\im(p)$.

  Finally, we have to compute the $\Gamma$-trace of $p$.  Let
  $e_1,\dots,e_{l_1}$ be the standard basis vectors of $l^2(\Gamma_1)^{l_1}$
  and $f_1,\dots, f_{l_2}$ be the standard basis vectors of
  $l^2(\Gamma_2)^{l_2}$ (the characteristic function of the neutral element in
  the corresponding component).

  Then $\{e_i\tensor f_j\}_{i=1,\dots,l_1;\, j=1,\dots , l_2}$ will be the
  standard basis for $l^2(\Gamma_1\times \Gamma_2)^{l_1\cdot l_2}$. Consequently
  \begin{multline*}
    \tr_\Gamma(p)=\sum_{i=1}^{l_1}\sum_{j=1}^{l_2}\innerprod{p_1\tensor
      p_2(e_i\tensor e_j),e_i\tensor e_j}_{l^2(\Gamma_1)\tensor
      l^2(\Gamma_2)}\\
    = 
    \sum_{i=1}^{l_1}\innerprod{p_1(e_i),e_i}_{l^2(\Gamma_1)}\cdot
      \sum_{j=1}^{l_2} \innerprod{p_2(f_j),f_j}_{l^2(\Gamma_2)} =
      \tr_{\Gamma_1}(p_1)\cdot \tr_{\Gamma_2}(p_2) 
  \end{multline*}
  This proves the claim.
\end{proof}

\begin{proposition}\label{prop:set_closure}
  Let $U\subset\reals_{\ge 0}$ be a subset of the non-negative real numbers
  with the following properties
  \begin{enumerate}
  \item $U$ is closed under multiplication with and addition of non-negative
    rational numbers;
  \item $U$ is additively closed: if $r,s\in U$ then also $r+s\in U$; 
  \item there are rational numbers $a,q\in\rationals_{\ge 0}$, $q>0$ and
    $d\in\naturals$ such that for every increasing sequence $0\le
    n_1<n_2<\dots$ the number $a+q\sum_{k=1}^\infty 2^k 2^{-dn_k} \in U$.
  \end{enumerate}
  Then $U=\reals_{\ge 0}$.
\end{proposition}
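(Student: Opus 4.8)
The plan is to split the argument into a few cheap structural reductions and one genuinely hard step, namely that the numbers supplied by hypothesis~(3), after applying operations (1)--(2), already fill a nondegenerate interval.

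\smallskip
\noindent\textbf{Reductions.} Since $U\neq\emptyset$ by (3), for any $u\in U$ we get $0=0\cdot u\in U$ by (1), hence $\rationals_{\ge0}=0+\rationals_{\ge0}\subseteq U$ by (1) again. Next, if $U$ contains a nondegenerate interval $[\alpha,\beta]$ with $0\le\alpha<\beta$, then $U=\reals_{\ge0}$: for $t>0$ pick a rational $\lambda$ in the nonempty interval $(t/\beta,t/\alpha)$ (in $(t/\beta,\infty)$ if $\alpha=0$); then $t\in\lambda[\alpha,\beta]\subseteq U$ by (1), and $0\in U$. So it suffices to put \emph{some} nondegenerate interval into $U$. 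By (1) and (2), $U$ contains the set $U_0$ of all finite sums $\sum_i\lambda_i\bigl(a+q\sum_k 2^k2^{-dn^{(i)}_k}\bigr)+\mu$ with $\lambda_i,\mu\in\rationals_{\ge0}$. Writing $\sum_k 2^k2^{-dn_k}=\sum_k 2^{-e_k}$ with $e_k:=dn_k-k$, one checks that as $n_\bullet$ runs over all strictly increasing sequences $0\le n_1<n_2<\cdots$, the sequence $e_\bullet$ runs exactly over strictly increasing integer sequences with $e_1\in\{-1,d-1,2d-1,\dots\}$ and every gap $e_{k+1}-e_k\in\{d-1,2d-1,\dots\}$; let $S$ be the resulting (compact, perfect, Lebesgue--null) set of values $\sum_k 2^{-e_k}$. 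Thus $U_0=\{\sum_i\lambda_i(a+qs_i)+\mu\}$ with $s_i\in S$, and it remains to show $U_0$ contains an interval.

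\smallskip
\noindent\textbf{Filling an interval.} I would work modulo $\rationals$. Let $V:=\reals/\rationals$, a $\rationals$-vector space, and let $\mathcal C\subseteq V$ be the $\rationals_{\ge0}$-cone generated by $\{\bar s:s\in S\}$, i.e.\ the image of $\operatorname{cone}_{\rationals}(S)=\{\sum_i\lambda_i s_i:\lambda_i\in\rationals_{\ge0}\}$. The argument rests on two facts. (A) There are "support sets" $E_0,\dots,E_{d-2}$ of the allowed shape --- e.g.\ arithmetic progressions of common difference $d-1$, one per residue class modulo $d-1$, shifted so as to tile $\{e\in\integers:e\ge M_0\}$ --- for which $s_0+\cdots+s_{d-2}=2^{-M_0+1}=:q_0\in\rationals_{>0}$, where $s_m:=\sum_{e\in E_m}2^{-e}\in S$; more generally, the complement in a half-line of any allowed support set is a finite union of allowed support sets, so for each $s\in S$ there is a relation $\bar s+\bar t_1+\cdots+\bar t_N=0$ in $V$ with $t_j\in S$, whence $-\bar s\in\mathcal C$ and $\mathcal C$ is a $\rationals$-subspace of $V$. (B) $\{\bar s:s\in S\}$ spans $V$ over $\rationals$ (this is in any case forced, since $U_0$ lies in the $\rationals$-span of $S\cup\rationals$). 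Combining (A) and (B), $\mathcal C=V$, so $\operatorname{cone}_{\rationals}(S)$ meets every coset of $\rationals$; and by adding $k$ copies of the relation $s_0+\cdots+s_{d-2}=q_0$, each coset $v$ contains points $c_0(v)+kq_0$ of $\operatorname{cone}_{\rationals}(S)$ for all $k\ge0$. Choosing, for a target $x$ in a suitable fixed interval and $v=\overline{x/q}$, the largest $k$ with $a\Lambda+qc\le x$ (where $c=c_0(v)+kq_0$ and $\Lambda$ is the corresponding coefficient sum) makes $\mu:=x-a\Lambda-qc$ a nonnegative rational (using $qc\equiv x\pmod\rationals$ and $a\Lambda\in\rationals$), so $x=\sum_i\lambda_i(a+qs_i)+\mu\in U_0$. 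This exhibits an interval in $U_0\subseteq U$.

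\smallskip
\noindent\textbf{Main obstacle.} Everything nontrivial sits in the second step, and its source is the rigidity of $S$: because the $n_k$ must be strictly increasing integers, the binary "support" $\{dn_k-k\}$ of a single witness is forced to be spread out (gaps $\ge d-1$) with the residue of its $k$-th term pinned to $-k\pmod d$. Hence $S$ is a very thin Cantor set, no bounded number of witnesses matches an arbitrary binary pattern "with disjoint supports", and one is driven to the coset/spanning formulation above; verifying the combinatorial facts (A) and (B) under these residue constraints, while keeping the affine correction by $a=\beta_1$, $q=\beta_2$ nonnegative, is the real work. (A more hands-on alternative to the $\reals/\rationals$ bookkeeping is to show directly that an $N$-fold sum of $S$ with itself, together with dyadic rescalings, covers an interval once $N$ is of order $2^{d-2}$, in the spirit of sums of thin self-similar Cantor sets becoming intervals.)
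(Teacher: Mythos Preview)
Your reductions are correct, but the $\reals/\rationals$ framework that carries the main step has genuine gaps. First, claim~(A) is false as stated: with $d=3$, the allowed support set $E=\{2,4,6,\dots\}$ (an arithmetic progression of step $d-1=2$ starting at $2\equiv -1\pmod 3$) has complement $\{3,5,7,\dots\}$ in the half-line $\{e\ge 2\}$, and the minimum $3$ of this complement satisfies $3\not\equiv -1\pmod 3$, so it cannot be the first element $e_1$ of \emph{any} allowed set; hence the complement is not a union (disjoint or otherwise) of allowed support sets, and the relation $\bar s+\bar t_1+\cdots+\bar t_N=0$ does not follow. Second, claim~(B) has no proof: the parenthetical ``this is in any case forced, since $U_0$ lies in the $\rationals$-span of $S\cup\rationals$'' observes only that (B) is \emph{necessary} for the conclusion you are trying to establish, which is circular. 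Third, even granting (A) and (B), passing to $\reals/\rationals$ discards all metric control: the assignment $v\mapsto c_0(v)$ is a mere choice function on an uncountable set and need not be bounded, so there is no reason the ``suitable fixed interval'' of targets $x$ exists with $a\Lambda_0(v)+qc_0(v)\le x$ for all relevant $v$.

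The paper proceeds along the ``hands-on alternative'' you mention parenthetically. After a rational renormalisation (absorbing $a$, $q$ and replacing $d$ by a suitable multiple $D$), it arranges that every $\sum_{k\ge 0}2^{k}2^{-Dn_k}$ with $0\le n_0<n_1<\cdots$ lies in $U$, and then shows \emph{constructively} that each $r=\sum_{n\in I}2^{-Dn}$ (for arbitrary $I\subset\naturals$) is a sum of exactly $2^{D-1}$ such numbers. The construction is an explicit digit-distribution scheme: the binary digits of $r$, which sit at multiples of $D$, are parceled out among $2^{D-1}$ summands in a cyclic pattern of blocks of lengths $1,2,4,\dots,2^{D-1}$, with shifts arranged so that each summand acquires its $k$-th digit at a position of the form $Dn_k-k$. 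This bypasses both the spanning question~(B) and all uniformity issues, and gives the interval directly. Your instinct that order-$2^{d}$ many summands suffice is right, but the content is in the explicit combinatorics, not in an abstract cone argument.
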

\begin{proof}
  Choose $m\in\naturals$ such that $b:=2^{dm-1}q>a$ is a multiple of $q$. Adding
  the rational number $b-a$ and multiplying with the rational number
  $2q^{-1}$ we see that all real numbers of the form
  \begin{equation}
    \label{eq:form1}
    2^0\cdot 2^{dm} +\sum_{k=1}^\infty 2^k 2^{-dn_k} \in U;\qquad 0\le
    n_1<\dots .
  \end{equation}
  Replacing $d$ by $D:=dm$ and using only sequences where each $n_k$ is a
  multiple of $m$, and multiplying with suitable powers of $2$, we see that
  all real numbers of the form 
  \begin{equation}
    \label{eq:form_exist}
    \sum_{k=0}^\infty 2^k 2^{-Dn_k}; \qquad 0\le n_0<n_1<\dots
  \end{equation}
  belong to $U$.

  Because $U$ is closed under multiplication with non-negative rational
  numbers it suffices to show that $U$ contains some non-empty open interval. 

  Moreover, because $U$ is additively closed and closed under multiplication
  with powers of $2$, it suffices to show that $U$ contains every real number of
  the form
  \begin{equation}\label{eq:form}
    r= \sum_{n\in I} 2^{-Dn};\qquad I\subset \naturals
  \end{equation}
  since an arbitrary real number between $0$ and $1$ is a sum of at most $D$
  multiples (by $2^k$ with $0\le k<d$) of numbers of the form
  \eqref{eq:form}.

  Fix therefore $I\subset \naturals$. We now describe $2^{D-1}$ numbers of the
  form \eqref{eq:form_exist} with sum equal to $r$.

  Instead of writing down the formulas, we describe the digits of these
  numbers in dyadic expansion. Note that the relevant feature of any number of
  the form \eqref{eq:form_exist} is that the consecutive digits occur at
  places which are multiples of $D$ (as is true for $r$), but each new digit
  shifted one further ``to the left''.

  The first $2^{D-1}$ dyadic digits of $r$ each give one (the first) digit of
  the $2^{D-1}$ numbers to be constructed. The next digit of $r$ (the summand
  $2^{-Dn_v}$ with $v=2^{D-1}+1$), shifted by
  $2^{D-1}$ to the right, gives the second digit of each of the $r$ numbers to
  be   constructed. Note that this is a summand of the form $2^1\cdot
  2^{-D(n_v+1)}$. Note also that the sum of these $2^{D-1}$ summands is
  exactly $2^{-Dn_v}$, i.e.~the corresponding digit of $r$. The next two
  digits are used, shifted by $2^{D-2}$ in the first or last $2^{D-2}$,
  respectively, of our
  numbers to be constructed. The same reasoning as before shows that these
  summands have the right form and add up to the right digits of $r$. 
  The next $4$ digits, shifted by $2^{D-3}$, are used in one quarter each,
  i.e.~$2^{D-3}$, of our numbers to be constructed. 

  We continue this construction inductively until we arrive at $2^{D-1}$
  digits which are not to be shifted at all. Then we cyclically continue this
  pattern inductively.

  The result are by construction the $2^{D-1}$ numbers, each of the form
  \eqref{eq:form_exist}, which therefore belong to $U$ and which add up to
  $r$. 

As explained above, this implies the assertion.
\end{proof}

\begin{corollary}
  Every non-negative real number is an $L^2$-Betti number of some covering of
  a compact manifold.
\end{corollary}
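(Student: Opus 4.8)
The plan is to deduce the Corollary directly from Proposition~\ref{prop:set_closure} and Theorem~\ref{theo:main}, together with the translation of $L^2$-dimensions of kernels into $L^2$-Betti numbers of normal coverings explained in the introduction. Concretely, I would let $U\subset\reals_{\ge 0}$ be the set of all numbers which arise as $\dim_{\Gamma}(\ker(a))$ for some finitely generated group $\Gamma$ and some $a\in M_l(\integers[\Gamma])$. The task is then to verify that $U$ satisfies the three hypotheses of Proposition~\ref{prop:set_closure}, so that $U=\reals_{\ge0}$, and finally to recall that each such dimension is realized as a third $L^2$-Betti number of a normal covering of a closed manifold.

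First I would check closure under addition and under the operations with rationals. Additivity (hypothesis~(2)) is exactly Lemma~\ref{sum}: given $(\Gamma_1,a_1)$ and $(\Gamma_2,a_2)$, the block sum $a_1\oplus a_2$ over $\Gamma_1\times\Gamma_2$ has kernel dimension the sum. Closure under multiplication by a non-negative rational $p/p'$ follows by combining two ideas: first, multiplying by a natural number $n$ is again additivity (take $n$ copies), and second, for the division one uses a standard trick --- e.g.\ tensoring with a suitable operator over a finite or free group whose kernel has dimension $1/p'$, via Lemma~\ref{product} --- or more simply one notes that over a finite group $\integers/p'$ there is an element of the group ring with $\dim\ker = k/p'$ for any $0\le k\le p'$. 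Adding a non-negative rational $c$ is then additivity together with the fact that $c$ itself lies in $U$ (it is the $L^2$-dimension of the kernel of a suitable matrix over the trivial group, or over $\integers/p'$). So hypotheses~(1) and~(2) hold.

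The substantive input is hypothesis~(3): the existence of $a,q\in\rationals_{\ge0}$, $q>0$, $d\in\naturals$ such that $a+q\sum_{k\ge1}2^k2^{-dn_k}\in U$ for \emph{every} increasing sequence $0\le n_1<n_2<\dots$. This is essentially Theorem~\ref{theo:main} with $d=6$: for any admissible $I=\{2,n_1,n_2,\dots\}$ one obtains the group $G_I$ (finitely generated, being a quotient of $\integers_2^{\oplus\G}\rtimes\G$ by a finitely generated-as-invariant-subgroup subspace, and with $\G$ two-generated) and $A\in\rationals[G_I]$ with $\dim_{G_I}(\ker(A+2))=\beta_1+\beta_2\sum_{k\ge1}2^{-6n_k+k}$, which after clearing denominators lies over $\integers[G_I]$ without changing the kernel. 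The only care needed is the congruence constraint ``$n_k\equiv 2\pmod 3$'' built into Theorem~\ref{theo:main}: I would absorb this by reindexing, writing $n_k = 3m_k + 2$ with $0\le m_1<m_2<\dots$ arbitrary, so that $\sum_k 2^{-6n_k+k}=2^{-12}\sum_k 2^k 2^{-18 m_k}$, giving hypothesis~(3) with $d=18$ and appropriately modified rationals $a,q$; alternatively one invokes that Proposition~\ref{prop:set_closure} already re-scales $d$ internally, so any fixed arithmetic-progression restriction on the exponents is harmless. Thus all three hypotheses hold and Proposition~\ref{prop:set_closure} gives $U=\reals_{\ge0}$.

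Finally, to pass from group-ring kernels to $L^2$-Betti numbers of coverings, I would recall the standard construction cited in the introduction (\cite[Lemma 10.5]{MR1926649}, \cite[Proposition 6 and Theorem 7]{MR1797748}): given a finitely generated group $\Gamma$ and $a\in M_l(\integers[\Gamma])$, there is a closed manifold $M$ (of dimension $7$, in the matrix case one may need to stabilize or work with an attached $2$-complex and then thicken) with $\pi_1(M)\cong\Gamma$ and a normal covering $\tilde M\to M$ with covering group $\Gamma$ such that $b_3^{(2)}(\tilde M;\Gamma)=\dim_\Gamma(\ker a)$. Applying this to the $(\Gamma_r,a_r)$ produced above for a given $r\ge0$ yields the claim. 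The main obstacle, such as it is, is bookkeeping rather than conceptual: verifying that $U$ genuinely satisfies hypothesis~(1) (in particular that division by an integer stays inside $U$ --- handled via finite groups or via Lemma~\ref{product}) and matching the congruence condition of Theorem~\ref{theo:main} to the clean sum $\sum 2^k2^{-dn_k}$ demanded by Proposition~\ref{prop:set_closure}; once these are arranged the corollary is immediate.
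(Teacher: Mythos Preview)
Your proposal is correct and follows essentially the same route as the paper: define $U$ as the set of kernel dimensions, verify the three hypotheses of Proposition~\ref{prop:set_closure} using Lemmas~\ref{sum} and~\ref{product} together with Theorem~\ref{theo:main}, and then invoke the standard passage to normal coverings. Your treatment is in fact slightly more careful than the paper's in two respects: you spell out explicitly how closure under rational scaling and translation follows (via finite groups and Lemma~\ref{product}), and you explicitly absorb the congruence constraint $n_k\equiv 2\pmod 3$ by reindexing to obtain $d=18$, whereas the paper's proof simply asserts that the hypotheses of Proposition~\ref{prop:set_closure} are met.
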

\begin{proof}
  By a standard reduction, it suffices that for every $r\in\reals_{\ge 0}$
  there is a finitely generated group $\Gamma$, $d\in\naturals$ and $A\in
  M_n(\integers\Gamma)$ such that $\dim_{\Gamma}(\ker(A))=r$.

  However, the main result of this paper asserts that 
for $I=\{2,n_1,n_2,\dots\}$ with
$n_0=2< n_1<n_2<\dots$ all congruent $2$ modulo $3$ and for certain $\beta_1,\beta_2\in \rationals_{>0}$,
whenever 
\begin{equation*}
  r=\beta_1+\beta_2\sum_{k=1}^\infty 2^k \cdot 2^{-n_kd}
\end{equation*}
there is a finitely generated $\Gamma_r$ and $a_r\in
\integers[\Gamma_r]$ such that $\dim_{\Gamma_r}(\ker(a_r))=r$.

  Using in addition Lemmas \ref{sum} and \ref{product} the set of von Neumann
  dimensions of kernels satisfies the assumptions of
  Proposition \ref{prop:set_closure}. The corollary follows.
\end{proof}

\section{Structure of the groups $G_I$}

To show that there are also \emph{universal} coverings with transcendental
$L^2$-Betti numbers---equivalently matrices over the group ring of a finitely
presented group with transcendental $L^2$-dimension of the kernel, we have to
analyze the groups used in Theorem \ref{theo:main} more precisely.

Recall from Definition \ref{def:elements} that, starting with $\G=\langle s_1,s_2\rangle$ either free or
$\G=\integers\wr\integers$ and given a subset $I\subset\naturals$, fixing
$F_l=\{s_1^{-1},e,s_1\}$, we have groups
\begin{equation*}
  G_I:= \left(        \mathbb{Z}_2^{\oplus {\G}}    / V_{F_l,I}\right)\semiprod \G.
\end{equation*}

Consider the basis $\{\delta_g\mid g\in \G\}$ of
$\integers_2^{\oplus\G}=\integers_2[\G]$ where $\delta_g$ is
the characteristic function of $\{g\}\subset \G$.

Set $u:=\sum_{h\in F_l}\delta_h$.
Recall that $V_{F_l,I}$ is the $\integers_2[\G]$-submodule of
$\integers_2[\G]$ generated by the
elements of the form $\sum_{h\in F_l}(\delta_{h}- \delta_{th})=u-tu,\; t\in \Lambda_I$,
so as $\integers_2$-vector space it is generated by elements of the form
\begin{equation*}
  \sum_{h\in F_l}(\delta_{gh}- \delta_{gth})=gu-gtu,\qquad g\in \G,\quad t\in
  \Lambda_I. 
\end{equation*}

\begin{lemma}\label{lem:normalform}
  \begin{enumerate}
  \item\label{item:generation} The subgroup $V_{F_l,I}$ is generated as
    $\integers_2[\G]$-module by
    the elements $w_g:=gu-u = \sum_{h\in F_l}( \delta_{h}-\delta_{g h})$ with
    $g=t_n$, $n\in I$.
  \item\label{item:linindep} The translates of $u$ by powers of $s_1$, i.e.~$\{gu\mid g\in \langle
    s_1\rangle\}$ satisfy the following property: if the support of
    $\sum_{k=1}^n s_1^{a_k} u$ with $a_1<a_2<\cdots < a_n$ belongs to
    $\{s_1^l,s_1^{l+1},\dots, s_1^r\}$ then $l+1\le a_1$ and $a_n\le r-1$.

In particular, the $g u$ with $g\in \langle s_1\rangle$ 
form a linearly independent subset of the vector space
    $\integers_2[\langle s_1\rangle]\subset \integers_2[\G]$.
  \item \label{item:cancel}
    Write $y\in V_{F_l,I}$ as sum $y=\sum_i g_i (t_iu-u)$ with $g_i\in \G$,
    $t_i\in\Lambda_I$ and with minimal number of such summands.

  Fix a left coset $g\langle s_1\rangle$ of $\langle s_1\rangle$ and assume
  that the support of $y$ (considered as a function on $\G$) is contained in
  $g\{s_1^l, s_1^{l+1},s_1^{l+2},\ldots, s_1^r\}$ with $l\le r$.

  Then, in the (minimal) sum $y=\sum_i g_i (t_iu-u)$, if $g_i$ or $g_it_i\in
  g\langle s_1\rangle$, then they already lie in
  $g\{s_1^{l+1},\ldots,s_1^{r-1}\}$. 

  In particular, 
 if the support of $y$
does intersect
  the coset $g\langle s_1\rangle$, then $r-l\ge 2$. On the other hand, if the
  support of $y$ does not intersect the coset $g\langle s_1\rangle$, then
  none of the $g_it_i$ and $g_i$ belongs to the coset $g\langle s_1\rangle$. 
 
\item\label{item:where_is_g}
    If $g\in \G$ with $w_g=gu-u\in V_{F_l,I}$ then $g\in\Lambda_I$.
  \end{enumerate}
\end{lemma}
\begin{proof} By definition, $V_{F_l,I}$ is generated as
  $\integers_2[\G]$-module by the $w_g$ with $g\in
  \Lambda_I$. However, 
  \begin{equation*}
    w_g+gw_{g'}= \sum_{h\in F_l} (\delta_{h}-\delta_{gh} +
    \delta_{gh}-\delta_{gg'h}) = w_{gg'}.
  \end{equation*}
  As $\Lambda_I$ by definition is generated by $\{t_n\mid n\in I\}$,
  \ref{item:generation} follows.

  As in our case $s_1$ is infinite cyclic, \ref{item:linindep} is a well known
  statement about $\integers_2[\integers]$: if $x:=\sum_{k=1}^n s_1^{a_k}u$ with
  $a_1<\dots <a_n$,
  then the value of $x$ at $s_1^{a_1-1}$ is non-zero, so $l\le a_1-1$, and
  similarly $r\ge a_n+1$.

  To prove \ref{item:cancel}, fix an arbitrary element $y=\sum_i g_it_i u
  -g_iu$ as in \ref{item:cancel}. 
  Consider now all the summands such that $g_it_i$ or $g_i\in g\langle
  s_1\rangle$. These are exactly the summands in $y$ contributing with one or
  two summands of the form $g s_1^{a_i} u$ whose support is contained in
  $g\{s_1^l,\dots,s_1^r\}\subset g\langle s_1\rangle$. By \ref{item:linindep},
  all the summands with $a_i\le l$ have to 
  appear pairwise to cancel each other out.
  But if we would e.g.~have $g_i=g_j$ for
  $i\ne j$ then we 
  could write $g_it_iu-g_i u + g_jt_ju-g_ju= (g_it_j)(t_j^{-1}t_i) u - (g_it_j)
  u$ with $t_j^{-1}t_i\in\Lambda_I$ and $g_it_j\in \G$, thus being able to
  write $y$ with fewer summands, violating the minimality for the expression
  of $y$. The same reasoning rules out that $g_it_i=g_j$ with $i\ne j$, where we
  obtain $g_it_iu-g_iu + g_jt_ju-g_ju=g_i(t_i^{-1}t_j)u-g_iu$. Finally,
  terms with $g_it_i=g_i$ by minimality also don't appear. 

  Similarly, we can rule out that for a minimal expression $g_i=gs_1^{a_i}$ or
  $g_it_i=gs_1^{a_i}$ with $a_i>r-1$.
  
  The above argument also shows that if $g_i t_i$ or $g_i \in \langle s_1  \rangle$ then the
  summands of the form $g s_1^{a_i} u$ in $y$ do not cancel and therefore 
  $y$ intersects the coset $g \langle s_1 \rangle$.

 To prove \ref{item:where_is_g} observe that the support of $w_g$ does
 intersect only the cosets 
 $\langle s_1\rangle$ and $g\langle s_1\rangle$. Written with minimal number
 of summands as in \ref{item:cancel} therefore
 \begin{equation}\label{eq:gu-u}
gu-u=\sum t_i s_1^{a_i} u-
 s_1^{a_i} u\quad\text{with}\quad t_i\in\Lambda_I
\end{equation}
such that we have equal cosets $t_i\langle
 s_1\rangle= g\langle s_1\rangle$. If $g\in\langle s_1\rangle$, then
 $\Lambda_I$ contains a non-trivial power of $s_1$ and therefore by Lemma
 \ref{lem:relations} contains $s_1$, so $g\in\Lambda_I$. Otherwise,
 $\Lambda_I\cap \langle s_1\rangle=\{e\}$ and by \ref{item:linindep} and
 minimality, the above expression \eqref{eq:gu-u} for $gu-u$ consists of
 exactly one summand $gu-u=ts_1^au-s_1^au$ which finally implies $a=0$ and $g=t\in\Lambda_I$.
\end{proof}

\begin{theorem}\label{theo:recursive_presentation}
 The group $G_I$ has a
  recursive presentation if and only if $I$ is recursively enumerable,
  i.e.~if there is a Turing machine listing exactly all elements of $I$. 
\end{theorem}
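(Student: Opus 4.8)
The plan is to prove both implications of the equivalence, keeping in mind the standard fact that a finitely generated group has a recursive presentation if and only if its word problem is recursively enumerable (equivalently, the set of words representing the identity is r.e.). So I would reduce the statement to: the word problem of $G_I$ is r.e. iff $I$ is r.e.

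First, the easy direction ($I$ r.e. $\Rightarrow$ $G_I$ recursively presented). Here $\G$ is a fixed finitely presented group (either $F_2$ or $\integers\wr\integers$, the latter being recursively — indeed finitely — presented by \eqref{eq:ZwreathZ}). The group $G_I = (\integers_2^{\oplus\G}/V_{F_l,I})\rtimes\G$ is generated by $s_1,s_2$ together with the image of $\delta_e$ (since $\integers_2^{\oplus\G}$ is generated as a $\G$-module by $\delta_e$, so $\integers_2^{\oplus\G}\rtimes\G$ is generated by $s_1,s_2,\delta_e$, and $G_I$ is a quotient). A presentation is obtained by taking a presentation of $\integers_2^{\oplus\G}\rtimes\G$ on these generators (this group is recursively presented, uniformly, with relations saying $\delta_e^2=e$, $g\delta_e g^{-1}$ and $g'\delta_e{g'}^{-1}$ commute for all $g,g'\in\G$, plus relations of $\G$) and adjoining, for each $n\in I$, the single relator $w_{t_n}$ rewritten as a word in $s_1,s_2,\delta_e$ — namely $\prod_{h\in F_l}(h\delta_e h^{-1})(t_n h \delta_e h^{-1}t_n^{-1})$ set equal to $e$. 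By Lemma~\ref{lem:normalform} these $w_{t_n}$ ($n\in I$) generate $V_{F_l,I}$ as a $\integers_2[\G]$-module, so adjoining them (and their $\G$-conjugates, which are consequences) exactly kills $V_{F_l,I}$. If a Turing machine enumerates $I$, then a Turing machine enumerates this relator set, so $G_I$ has a recursive presentation.

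Second, the harder direction ($G_I$ recursively presented $\Rightarrow$ $I$ r.e.). If $G_I$ has a recursive presentation on some finite generating set, then (by the standard translation between generating sets, which is effective) the word problem on the generating set $\{s_1,s_2,\delta_e\}$ is r.e.; i.e.\ one can effectively enumerate all words in $s_1^{\pm1},s_2^{\pm1},\delta_e$ that equal the identity in $G_I$. The key point is to extract membership in $I$ from this. By the last sentence of Lemma~\ref{lem:normalform}, for $g\in\G$ one has $w_g\in V_{F_l,I}$ if and only if $g\in\Lambda_I$; and I claim that in fact $t_n\in\Lambda_I$ if and only if $n\in I$. Granting this claim, consider for each $n$ the word $u_n := \prod_{h\in F_l}(h\delta_e h^{-1})(t_n h\delta_e h^{-1} t_n^{-1})$ (a word representing $w_{t_n}$ in the quotient $\integers_2^{\oplus\G}/V_{F_l,I}$, hence a word in $s_1,s_2,\delta_e$ once $F_l=\{s_1^{-1},e,s_1\}$ and $t_n=s_2^ns_1s_2^{-n}$ are expanded). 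Then $u_n$ represents the identity in $G_I$ iff $w_{t_n}\in V_{F_l,I}$ iff $t_n\in\Lambda_I$ iff $n\in I$. Running the enumeration of identity-words and checking, for each $n$ in turn (interleaved, since we do not know a priori which $u_n$ will appear), whether $u_n$ shows up, gives a Turing machine enumerating exactly $I$. Hence $I$ is r.e.

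The main obstacle is the claim that $t_n\in\Lambda_I \iff n\in I$, i.e.\ that the $t_n$ for $n\in I$ are ``independent'' inside $\G$ so that no $t_n$ with $n\notin I$ sneaks into $\Lambda_I$. For $\G=\integers\wr\integers$ this is exactly Lemma~\ref{lem:relations}: the elements $t_n$ ($n\in\integers$) freely generate a free abelian subgroup, so $\Lambda_I=\langle t_i\mid i\in I\rangle$ is free abelian on $\{t_i\mid i\in I\}$ and contains $t_n$ only when $n\in I$. For $\G=F_2$ the elements $t_n=s_2^ns_1s_2^{-n}$ freely generate a free subgroup (a standard ping-pong / normal-form fact), which is even stronger. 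So in both cases the claim holds, and what remains is only the bookkeeping of translating between generating sets and presentations, which is routine recursion theory. I would therefore present the argument as: (1) reduce to word-problem r.e.; (2) the easy direction via Lemma~\ref{lem:normalform}; (3) the $t_n$-independence claim from Lemmas~\ref{lem:relations}; (4) the hard direction via the test words $u_n$.
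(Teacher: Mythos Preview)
Your argument is correct and follows essentially the same route as the paper: exhibit an explicit recursive presentation using Lemma~\ref{lem:normalform} for the forward direction, and for the converse use the test words $w_{t_n}$ together with the last assertion of Lemma~\ref{lem:normalform}. You are in fact more careful than the paper on one point: the paper writes ``if $b\notin I$ (i.e.~$t_b\notin\Lambda_I$)'' without comment, whereas you correctly isolate the needed claim $t_n\in\Lambda_I\iff n\in I$ and supply its justification via Lemma~\ref{lem:relations} (for $\integers\wr\integers$) or normal forms (for $F_2$).
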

\begin{proof}
  Assume that $I$ is recursively enumerable.

  Using Lemma \ref{lem:normalform}, a presentation of $G_I$ is given by the
  generating set $s_1,s_2,\tau=:\delta_e$ with the following relations:
  \begin{itemize}
  \item $\tau^2=1$
  \item $g^{-1}\tau g=:\delta_g$ commutes with $h^{-1}\tau h=:\delta_h$ for
    each $g,h\in \G$.  
  \item $\prod_{x\in F_l}\delta_{gx}\delta_{gt_nx}$ is trivial for
    each $n\in I$ and each $g\in \G$.
  \item If $\G=\integers\wr\integers$, in addition we need the relations of
    this group: $s_2^ns_1s_2^{-n}$ commutes with $s_1$ for each $n\in\integers$.
  \end{itemize}
  As it is easy to list all elements of $\G$, starting with the Turing machine
  for $I$ we can produce a Turing machine
  listing all these relations, i.e.~this presentation is recursive.

  Assume, on the other hand, that there is a Turing machine producing all the
  relations in $G_I$. In particular, it will list all the words
$ w_{t_n}= \prod_{x\in F_l}\delta_{x}\delta_{t_nx}$
  for the $n\in I$. Because the word problem in $\G$ and in
  $\integers_2[\G]$ is solvable, we can recognize these words and
  determine the $n\in I$ from them. On the other hand, by Lemma
  \ref{lem:normalform} if $b\notin I$ (i.e.~$t_b\notin\Lambda_I$) then
  $w_{t_b}\notin V_{F_l,I}$ i.e.~$w_{t_b}$ and therefore $b$ is not listed. In
  other words, this algorithm produces exactly the elements of $I$, and hence
  $I$ is recursively enumerable.  
\end{proof}

\begin{theorem}\label{theo:solvable_word}
  The group $G_I$ does have solvable word problem if and only if $I$ is
  recursive, i.e.~there is a Turing machine listing the elements of $I$ and
  another one listing those of the complement of $I$.
\end{theorem}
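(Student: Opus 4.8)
The plan is to prove both implications, mirroring the structure of the proof of Theorem \ref{theo:recursive_presentation} but upgrading ``recursively enumerable'' to ``recursive'' on both sides. The key technical input is the normal form provided by Lemma \ref{lem:normalform}, which characterizes exactly when an element $w_{t_b}$ lies in $V_{F_l,I}$, namely precisely when $b\in I$. So I would first observe that the word problem in $G_I$ amounts to deciding, for a word $v$ in the generators $s_1,s_2,\tau$, whether the corresponding element is trivial; by the semidirect product structure this splits into deciding triviality in $\G$ (which is always solvable, both for the free group and for $\integers\wr\integers$) together with deciding whether a given element of $\integers_2^{\oplus\G}$ lies in the subgroup $V_{F_l,I}$.

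For the ``only if'' direction: suppose $G_I$ has solvable word problem. Running the decision algorithm on the words $w_{t_n}$ for $n=0,1,2,\dots$, we can decide for each $n$ whether $w_{t_n}\in V_{F_l,I}$, which by Lemma \ref{lem:normalform} holds iff $n\in I$. This gives a decision procedure for membership in $I$, so $I$ is recursive (equivalently, both $I$ and its complement are recursively enumerable).

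For the ``if'' direction: suppose $I$ is recursive. By Theorem \ref{theo:recursive_presentation}, $G_I$ has a recursive presentation, so its word problem is at least recursively enumerable (list all consequences of the relations). It remains to show the complement of the word problem is recursively enumerable, i.e.~we can certify when a word is \emph{non}-trivial. Here is where the normal form of Lemma \ref{lem:normalform} does the real work: given a word $v$, first reduce in $\G$; if the $\G$-part is nontrivial we are done. Otherwise $v$ represents an element $x\in \integers_2^{\oplus\G}$, given explicitly as a finite $\integers_2$-linear combination of basis elements $\delta_g$, and we must decide whether $x\in V_{F_l,I}$. Using Lemma \ref{lem:normalform}, $V_{F_l,I}$ is spanned by the $\integers_2[\G]$-translates of the finitely-described generators $w_{t_n}$ with $n\in I$; since $x$ has finite support $\sigma$, only those translates $gw_{t_n}$ whose support meets $\sigma$ can contribute, and the relevant $n$ are bounded by the ``horizontal width'' of $\sigma$ in the cosets $g\langle s_1\rangle$. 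Since $I$ is recursive, we can enumerate the finitely many relevant $n\in I$ below that bound, write down the finite-dimensional $\integers_2$-system expressing $x$ as a combination of the corresponding translated generators, and solve it by linear algebra over $\integers_2$; this decides $x\in V_{F_l,I}$.

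The main obstacle is making precise the reduction from ``$x\in V_{F_l,I}$'' to a \emph{finite} linear-algebra problem over $\integers_2$: a priori $V_{F_l,I}$ is an infinite-dimensional space with infinitely many generators, and one needs the structural statement in Lemma \ref{lem:normalform} (that support intervals have length $\ge 2$, that each interior point of a support interval is matched by a translate landing in the interior of another support interval, and in particular that the $t_n$ involved must have $n$ bounded in terms of the horizontal extent of $\sigma$) to confine the search to a computable finite set of candidate generators. Once that localization is in place, solvability of the word problem in $\G$ and recursiveness of $I$ combine routinely to give an algorithm, and conversely the failure of recursiveness of $I$ immediately breaks solvability via the $w_{t_n}$ test. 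I would be careful to note that for $\G = \integers\wr\integers$ the extra relations $[s_2^ks_1s_2^{-k},s_1]=1$ are themselves recursively listable and do not obstruct the $\G$-word-problem, which is classically solvable for this (finitely generated, linear, residually finite) group.
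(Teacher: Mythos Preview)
Your ``only if'' direction is correct and identical to the paper's: test the words $w_{t_n}$ with the assumed decision procedure and invoke Lemma~\ref{lem:normalform} to conclude $w_{t_n}\in V_{F_l,I}\Leftrightarrow t_n\in\Lambda_I\Leftrightarrow n\in I$. Your reduction of the ``if'' direction to deciding membership $x\in V_{F_l,I}$ for a given finitely supported $x\in\integers_2^{\oplus\G}$ is also exactly the paper's first move.

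The gap is in your localization step. The assertion that ``only those translates $gw_{t_n}$ whose support meets $\sigma$ can contribute'' is not justified: over $\integers_2$ an expression $x=\sum_k g_kw_{t_{n_k}}$ may involve summands whose supports cancel among themselves without touching $\sigma$, so there is no a~priori reason a minimal expression lives inside any fixed neighborhood of $\sigma$. More seriously, the claim that the relevant $n$ are bounded by the \emph{horizontal} width of $\sigma$ in the cosets $g\langle s_1\rangle$ is wrong as stated: the index $n$ in $t_n=s_2^ns_1s_2^{-n}$ controls which \emph{pair of cosets} of $\langle s_1\rangle$ the two halves of $gw_{t_n}$ lie in, not the length of any horizontal interval (each half has horizontal width exactly~$3$ for every $n$). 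Neither bound, as you phrase it, confines the search to a finite linear system.

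The paper sidesteps this localization problem by an iterative reduction rather than a single finite linear-algebra solve. One picks a coset $C=g\langle s_1\rangle$ meeting $\sigma$, normalizes so the support interval there is $[gs_1^{-1},gs_1^d]$, and applies the structural clause of Lemma~\ref{lem:normalform}: either the interval is too short (so $x\notin V_{F_l,I}$), or the interior point $g$ must be linked via some $t\in\Lambda_I$ to the interior of \emph{another} support interval of the \emph{current} $x$. The candidate $t$'s are thus the finitely many elements $g^{-1}h$ with $h$ ranging over the (finite) support, and for each such $t$ membership in $\Lambda_I$ is decidable from recursive $I$ (via Lemma~\ref{lem:relations} for $\integers\wr\integers$, or the free-group analogue). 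If no such $t$ exists one certifies $x\notin V_{F_l,I}$; if one does, one subtracts $gw_t\in V_{F_l,I}$ and repeats. The support in $C$ strictly shrinks, so the process terminates. This iterative mechanism is what replaces your unjustified finite bound; you correctly identified that Lemma~\ref{lem:normalform} is the key, but it is used to drive a reduction, not to cap the generators in a one-shot linear system.
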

\begin{proof}
  Assume that $G_I$ has a solvable word problem. This means that, if we write
  down $w_{t_n}=\prod_{x\in F_l} \delta_x\delta_{t_ngx}$ we can decide whether
  $w_{t_n}=e$ or not, i.e.~$w_{t_n}\in V_{F_l,I}$ or not. By Lemma
  \ref{lem:normalform} this means that we can decide whether $t_n\in\Lambda_I$
  or not, i.e.~whether $n\in I$.

Let us now suppose that $I$ is recursive.
  There
  is a (computable) normal form for each element of $\integers_2^{\oplus\G}\semiprod\G$,
  written as the product of an element of $\integers_2^{\oplus\G}$
  and of an element of $\G$. It follows that, since $\G$ has solvable world
  problem, the word 
  problem in $G_I$ is solvable if and only if it is solvable in the
  normal subgroup $\integers_2^{\oplus\G}/V_{F_l,I}$. 

  This is equivalent to solving whether an element $x\in\integers_2^{\oplus\G}$
  belongs to $V_{F_l,I}$. This can be done as follows (provided $I$ is
  recursive):

 The function $x$ is finitely supported on $\G$ with values in
  $\integers_2$. Consider, as in the proof of Lemma \ref{lem:normalform}, its
  restriction to the coset $C:=g\langle s_1\rangle$. Assume this restriction
  is non-zero and form the (minimal) support interval
  $\{gs_1^{-1}, gs_1,\ldots,gs_1^d\}$ as in Lemma \ref{lem:normalform} (we
  choose $g$ in 
  its coset appropriately).  Because we can solve the word problem in $\G$, we
  can compute all these non-empty support intervals for the different cosets
  of $\langle s_1\rangle$.

By Lemma \ref{lem:normalform}
if  $d\le 0$ then $x\notin V_{F_l,I}$.

 Otherwise we now check, using that $I$ is recursive together
  with Lemma \ref{lem:relations}, whether there is $t\in\Lambda_I$
  such that $gt$ is in the interior of {another} support interval, by checking
  whether $g^{-1}g'\in\Lambda_I\langle e_1\rangle$ for the finitely many
  cosets $g'\langle s_1\rangle$ intersecting the support of $x$ non-trivially.

  If this is not the case, then by Lemma \ref{lem:normalform} $x\notin
  V_{F_l,I}$. Otherwise, subtract $gu-gtu$ (which is an element of
  $V_{F_l,I}$) from $x$ and continue as above. 

  This decreases the sum of the lengths of the support intervals. Therefore,
  after finitely many steps, either we observe that $x\notin V_{F_l,I}$ or the
 support is empty, i.e.~$x\in V_{F_l,I}$. 
\end{proof}

\section{Finitely presented groups}

\begin{theorem}\label{theo:main_two}
  There is an explicitly given finitely presented group $G$ and element $A\in
  \integers[G]$ 
  such that $\dim_{G}\ker(A)$ is transcendental.
  
  Consequently, there is a compact manifold $M$ such that an $L^2$-Betti number of the
  universal covering is transcendental.
\end{theorem}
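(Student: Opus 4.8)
The plan is to combine Theorem~\ref{theo:main} (the explicit formula for $\dim_{L^2}(\ker(A+2))$) with Theorem~\ref{theo:recursive_presentation} (that $G_I$ is recursively presented when $I$ is recursively enumerable) and finally with Higman's embedding theorem to pass from a recursively presented group to a finitely presented one. First I would choose a specific recursively enumerable set $I=\{2,n_1,n_2,\dots\}$ with $2<n_1<n_2<\dots$, all $n_k\equiv 2\pmod 3$, such that the resulting real number $\beta_1+\beta_2\sum_{k\ge1}2^{-6n_k+k}$ is a Liouville number, hence transcendental. This is easy to arrange: since $\beta_1,\beta_2$ are fixed nonzero rationals and the tail $\beta_2\sum_{k\ge K}2^{-6n_k+k}$ can be made smaller than any prescribed inverse power of the denominator of the partial sum $\beta_1+\beta_2\sum_{k<K}2^{-6n_k+k}$ by choosing $n_K$ large enough, a suitable rapidly-growing recursively enumerable (indeed primitive recursive) sequence $(n_k)$ forces the Liouville condition; by Liouville's theorem the number is transcendental.

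Next I would invoke Theorem~\ref{theo:main} to get a finitely generated but not finitely presented group $G_I$ (of the form $\mathbb{Z}_2^{\oplus\G}/V_{F_l,I}\semiprod\G$ with $\G$ free or $\integers\wr\integers$) and an element $A\in\rationals[G_I]$ with $\dim_{G_I}(\ker(A+2))$ equal to this transcendental number. Setting $Q:=A+2\in\rationals[G_I]$ and clearing denominators, I may assume $Q\in\integers[G_I]$ without changing its kernel, hence without changing $\dim_{G_I}(\ker Q)$. Since $I$ is recursively enumerable, Theorem~\ref{theo:recursive_presentation} gives that $G_I$ has a recursive presentation, so by Higman's embedding theorem~\cite{MR0130286} $G_I$ embeds into an explicitly given finitely presented group $G$. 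It is a standard fact (e.g.~\cite[Proposition 3.1]{MR1828605}, the induction principle) that the von Neumann dimension of the kernel is unchanged when $Q$ is regarded as acting on $\ell^2(G)$ rather than $\ell^2(G_I)$; thus $\dim_G(\ker Q)=\dim_{G_I}(\ker Q)$ is transcendental, proving the algebraic statement with $A:=Q$.

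For the geometric conclusion I would appeal to the standard construction referenced in the introduction: given a finitely presented group $G$ and $A\in\integers[G]$, one builds a closed $7$-dimensional manifold $M$ with $\pi_1(M)\cong G$ whose third $L^2$-Betti number (computed via the universal covering) equals $\dim_G(\ker A)$, by \cite[Lemma 10.5]{MR1926649} together with \cite[Proposition 6 and Theorem 7]{MR1797748}. Applying this to our $G$ and $A$ yields a compact manifold $M$ with $b_3^{(2)}(\widetilde M)=\dim_G(\ker A)$ transcendental, which is the desired statement.

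The main obstacle, and the step requiring the most care, is making the Liouville (or more generally transcendence) condition \emph{compatible} with recursive enumerability of $I$ while respecting the arithmetic constraints $n_0=2$, $n_k\equiv 2\pmod 3$, and $n_k$ strictly increasing — one must exhibit a single explicitly computable sequence that simultaneously grows fast enough for the Liouville estimate and is produced by an actual Turing machine. This is genuinely elementary (a greedy recursive construction works: having committed to $n_1<\dots<n_{K-1}$, compute the denominator $q_K$ of the rational partial sum and then pick the least admissible $n_K$ with $2^{-6n_K+K}<\beta_2^{-1}q_K^{-K}$), but it is where the \emph{explicitness} claimed in the theorem must be honestly verified rather than merely asserted; everything else — clearing denominators, Higman embedding, the induction principle for $\dim$, and the $7$-manifold construction — is invoked as a black box from the cited literature.
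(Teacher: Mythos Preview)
Your proposal is correct and follows essentially the same route as the paper: invoke Theorem~\ref{theo:main}, choose a recursively enumerable $I$ making the sum a Liouville number, apply Theorem~\ref{theo:recursive_presentation} and Higman's theorem, then the induction principle and the standard $7$-manifold construction. The only difference is cosmetic: the paper simply takes $n_k=k!$ (tacitly adjusted for the congruence constraint) rather than your greedy construction, since $\sum_k 2^{-6k!+k}$ is already visibly Liouville without any denominator bookkeeping.
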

\begin{proof}
  In Theorem \ref{theo:main} we give an explicit construction of $G$ and $A$
  such that $\dim_G\ker(A)=\beta_1+\beta_2\sum_{k=1}^\infty 2^{-dn_k+k}$ for
  every subset $I=\{n_1<n_2<\dots\}\subset\naturals$. 

  Moreover, if $I$ is recursively enumerable, e.g.~$I=\{k!\mid k\in\naturals\}$ then the
  corresponding group $G$ has a recursive presentation by Theorem
  \ref{theo:recursive_presentation}. If we use this set $I$, then the
  resulting $\sum_{k=1}^\infty 2^{-dk!+k}$ is transcendental as it is a
  Liouville number \cite{liouville44:_sur_des_class_tres_etend}.

  Finally, as explained in the introduction, we use e.g.~\cite[Proposition
3.1]{MR1828605} and replace $G$ by a finitely presented supergroup.
To produce such a finitely presented group which contains the recursively presented
groups $G$, we use Higman's theorem \cite{MR0130286}. How to explicitly construct the
supergroup and its presentation is shown nicely in \cite[Chapter 12, p.~450
ff]{MR1307623}. 
\end{proof}

\begin{remark}
  The finitely presented groups in Theorem \ref{theo:main_two} are obtained
  via application of Higman's embedding theorem. Unfortunately, although the
  recursively presented groups used as input for this theorem can be arranged
  to be solvable, this can not be expected for the resulting finitely
  presented group (indeed, the method of proof will produce groups which
  contain non-abelian free subgroups). Moreover, the construction in principle
  is explicit, but in practice the finite presentation obtained will be
  extremely cumbersome.

  Some of the examples of Grabowski
  \cite{grabowski10:_turin_machin_dynam_system_and_atiyah_probl} are much more
  explicit and give solvable (hence amenable) groups.
\end{remark}

\begin{remark}
  Using the method of proof of Proposition \ref{prop:set_closure} one can obtain
  many transcendental numbers which occur as $L^2$-Betti numbers of
  universal coverings of manifolds, or equivalently as kernel-dimensions for
  elements in the group ring of finitely presented groups. In particular, one
  can obtain all numbers of the form $\sum_{n\in I} 2^{-n}$ for a subset
  $I\subset\naturals$ which is recursive. In this case, moreover, we can
  arrange that the groups in question have a solvable word problem by Theorem
  \ref{theo:solvable_word}. 
\end{remark}

\begin{remark}
  Grabowski obtains all numbers $\sum_{n\in I} 2^{-n}$ where $I$ is
  recursively enumerable. We obtain all $\sum_{k=1}^\infty 2^{-dn_k+k}$ for
  $I=\{n_1<n_2<\dots\}$ recursively enumerable. Itai and Dror Bar-Natan explained to us
  that these two 
  classes of groups do not coincide\footnote{Let $J=\{n_1<n_2<\dots\}$ be an infinite
    recursively enumerable, but not 
    recursive set. Set $I:=\{2^{n_k}\}$ which is then also recursively
    enumerable. But $TI:=\{2^{n_k}+k\}$ is not recursively enumerable:
    otherwise, as $k\le n_k<< 2^{n_k}$ one could recover from the $2^{n_k}+k$
    also $k$ (and $n_k$). But the information that $n_k$ is the $k$-th
    smallest element of $J$ allows us, by waiting until $k-1$ smaller elements
    of $J$ are listed, to determine exactly the elements of $J$ which are
    smaller than $n_k$ and eventually to decide which numbers are in $J$ and
    which are not ---contradicting that $J$ is not recursive.}. Variations of
  the constructions will yield yet other values.

  It is clear that there are all together only countably many possible
  $L^2$-Betti numbers using the integral group ring of finitely presented
  groups (as the set of isomorphism classes of these groups is countable).

  It is an open question how this set exactly looks like. In
  \cite{Groth} it is implicitly discussed that for any
  $L^2$-Betti number $r$ of the universal covering of a finite CW-complex there
  is a Turing machine which produces a sequence of rational numbers whose
  limit superior is $r$. 

  In \cite{Groth}, it is also shown that an
  $L^2$-Betti number obtained from a finitely presented group with solvable
  word problem which is of $L^2$-determinant class (as introduced in
  \cite{MR1828605}, and satisfied for all the groups we constructed in this
  article) is of the form $\sum_{n\in I} 2^{-n}$ for a recursive
  subset $I\subset\naturals$. Consequently, these are precisely the
  $L^2$-Betti numbers obtained with groups which have a solvable word problem
  and satisfy the determinant conjecture.
\end{remark}



\begin{remark}
  The construction we have described here allows for many
  modifications. Essentially, we can make an operator $A$ which accepts
  \emph{local} patterns in the Cayley graph of $\G$. One interesting
  modification would be to only accept 1-neighborhoods of hooks with a
  thickened neighborhood of the ends. 

  Then one could replace in the definition of the quotient groups the set
  $F_l$ by a slightly larger set $F=\{e,s_1,s_1^2,s_1^3\}$. Its translates   only fit
  into the relevant set (the $1$-neighborhood of the hook with thickened ends)
  at the ends. This way one could arrange to have identifications in such
  subsets only if the two legs of the hook have equal length, and to have
  exactly one identification in this case. Nothing else changes, but the final
  sum corresponding to the calculation of Theorem \ref{theo:main}  gives
  \begin{equation*}
    \beta_1'+\beta_2' \sum_{k=1}^\infty 2^{-dn_k}.
  \end{equation*}
  It is then easy to see that, using recursively presented groups, we can get
  all numbers $\sum_{k=1}^\infty 2^{-n_k}$ with $I=\{n_1<n_2<\dots\}$
  recursively enumerable. Consequently, these numbers are also obtained as
  $L^2$-Betti numbers of universal coverings of compact manifolds.
\end{remark}

\begin{remark}
  We can go even one step further with our modifications and instead of hooks
  with two vertical legs work with hooks with left leg vertical as before, but
  right leg horizontal $\{g,gs_1,\dots,gs_1^d\}$. Again, one looks at the
  $1$-neighborhood of such hooks, but with thickened ends.

  Finally, one uses  $F=\{s_1^{-2},s_1^{-1},e,s_1,s_2,s_2^2,s_2^{-1}\}$ in the form of a cross.
  Then translates of $F$ fit only into our neighborhoods of the hook if they are
  placed in the end.

  Instead of the subgroups $\Lambda_I$ one works with subgroups $\Lambda_I'$
  generated by $s_1^ns_2^n$ for $n\in I$. At least if $\Gamma$ is free, this
  subgroup is free on these generators and has appropriate properties
  corresponding to those of $\Lambda_I$ we used above, and the extension lemma
  works for the cross $F$ (use the proof of \cite[Lemma 5.5]{austin09:_count_to_quest_of_atiyah}).

  We can then arrange the local patterns at the two ends (which are locally
  different: one is horizontal, the other vertical) to differ for those which
  contribute to the eigenvalue $-2$ in such a way that extension is \emph{not}
    possible. It follows that, instead of an identification which increases
    the weight of the contribution to the spectrum, those paths where both
    legs have length $n_k\in I$ do not contribute at all.

    Carrying out the calculations, we obtain for $I$ recursively enumerable an
    $L^2$-Betti number (for a 
    recursively presented group) of the form
    \begin{equation*}
      \beta_1''-\beta_2'' \sum_{k\in I}^\infty 2^{-dk};\quad\text{with $\beta_1'',\beta_2''\in\rationals$, $d\in\naturals$}.
    \end{equation*}

  We haven't checked, but expect, that the same works with
  $\G=\integers\wr\integers$. 
\end{remark}

\bibliographystyle{plain}
\bibliography{L2}

\end{document}